\documentclass{article}

\usepackage{quoting}
\usepackage{setspace}
\usepackage[utf8]{inputenc} 
\usepackage{amsmath}
\usepackage{enumitem}
\usepackage{amsfonts}
\usepackage{amssymb}
\usepackage{mathpazo}
\usepackage[a4paper,top=2.54cm,bottom=2.0cm,left=2.0cm,right=2.54cm]{geometry}
\usepackage{cancel}
\usepackage[usenames, svgnames,dvipsnames]{xcolor}
\usepackage{multicol}
\usepackage[T1]{fontenc}
\usepackage{calrsfs}
\usepackage{MnSymbol}
\usepackage{adjustbox}
\usepackage{lipsum}
\usepackage{listings}
\usepackage{longtable}
\usepackage{mathtools}
\usepackage{amsthm}
\usepackage[colorlinks=true, citecolor=DarkGreen, linkcolor=NavyBlue, urlcolor=DarkRed, filecolor=green]{hyperref}
\newtheorem{theorem}{Theorem}[section]

\usepackage[colorlinks=true, citecolor=DarkGreen, linkcolor=NavyBlue, urlcolor=DarkRed, filecolor=green]{hyperref}

\newtheorem{lemma}[theorem]{Lemma}

\newtheorem{obs}[theorem]{Remark}

\newtheorem{proposition}[theorem]{Proposition}

\newcommand{\F}{\mathbb{F}_q}
\newcommand{\N}{N_q(\mathcal{F})}
\newcommand{\C}{\mathcal{F}}
\newcommand{\f}{\mathcal{F}}

\usepackage{etoolbox}
\title{Frobenius nonclassicality of generalized Fermat curves with respect to conics}
\author{Nazar Arakelian and Leandro A. M. Rodrigues}

\begin{document}

\Large
\maketitle 
\normalsize

\begin{abstract}
The effective application of the St\"ohr-Voloch theory for the linear system of plane curves of a fixed degree to bound the number of rational points of a family of plane curves defined over $\F$ requires the characterization of the $\F$-Frobenius nonclassical curves in the family. In this paper, we provide necessary and sufficient conditions for certain generalized Fermat curves $\f$ defined over $\F$ to be $\F$-Frobenius nonclassical with respect to the linear system of conics. In the Frobenius classical cases, we obtain nice bounds for the number $\N$ of rational points of $\f$ via St\"ohr-Voloch theory, whereas in the Frobenius nonclassical cases, we derive explicit formulas for $\N$.
\end{abstract}

\section{Introduction}
\large

Let $\F$ be a finite field with $q=p^h$ elements, where $p$ is a prime integer, and let $\f$ be a (geometrically irreducible, projective, algebraic) curve defined over $\F$. Denote by $N_q(\f)$ the number of $\F$-rational points on a nonsingular model of $\F$. It is well known that the computation or at least the estimate of $N_q(\f)$ is one of the most compelling problems in the theory of algebraic curves defined over finite fields. Undoubtedly, the most remarkable result in this direction is the Hasse-Weil bound, see
 \cite [Theorem 5.2.3]{Sti}, which states
\begin{equation} \label{Eq1}
|N_q(\f)-(q+1)|\leq 2g\sqrt{q},
\end{equation}
where $g$ denotes the genus of $\f$. Since its inception, many authors have been trying to improve \eqref{Eq1} whenever possible. Notable contributions in this direction include the work of Serre, Stark, and Ihara \cite{Ser,Stk,Sti}.

Such efforts to determine increasingly better bounds for $N_q(\f)$ are justified, as we can find applications involving the number of rational points of an algebraic curve in other settings, such as Waring's problem over finite fields \cite{GV1}, exponential sums \cite{CP}, among others.

In 1985, Stöhr and Voloch \cite{SV} introduced a new method to estimate $N_q(\f)$. Unlike Hasse-Weil's approach, the bounds obtained via Stöhr-Voloch's method depend on the geometrical aspects of a model of $\f$. Their method not only corroborates the results obtained by Hasse-Weil, Serre and Stark, but also opens new possibilities to improve \eqref{Eq1}.

For instance, let $\f$ be a plane curve of degree $n$. 
Given an integer $s \in \{1,\ldots, n-1\}$, let $\Sigma_s$ be the linear system of curves of degree $s$ in $\mathbb{P}^2(\overline{\mathbb{F}}_q)$. Considering $M=\binom{s+2}{2}-1$, it is shown in \cite[Proposition 2.1]{SV} that there exists a sequence of integers $0=\nu_0<\ldots<\nu_{M-1}$ called the $\F$-Frobenius order sequence  of $\f$ with respect to $\Sigma_s$. Via \cite[Theorem 2.13]{SV}, we obtain
\begin{equation}\label{Eq3}
N_q(\f) \leq \dfrac{(\nu_1+\ldots+\nu_{M-1})(2g-2)+(q+M)sn}{M}.
\end{equation}
If $\nu_i=i$ for $i=0,1,\ldots,M-1$, the curve is said to be $\F$-Frobenius classical  with respect to $\sum_s$ (or with respect to curves of degree $s$). Otherwise, it is called $\F$-Frobenius nonclassical. Furthermore, given a point $Q \in \f$, if $j_0(Q),\ldots,j_M(Q)$ denote all possible intersection multiplicities of $\f$ with curves of $\Sigma_s$ at $Q$, then \eqref{Eq3} can be refined to
\begin{equation} \label{Eq4}
    N_q(\f)\leq \frac{(\nu_1+\dots+\nu_{M-1})(2g-2)+(q+M)sn-\sum A(Q)}{M},
\end{equation}
where
\begin{equation*}
    A(Q)=
    \begin{cases}
    &\sum_{i=1}^M(j_i(Q)-\nu_{i-1})-M,\: \text{if}\: Q \in \f(\F)\\
    &\sum_{i=0}^{M-1}(j_i(Q)-\nu_i),\: \text {otherwise}        
    \end{cases}.
\end{equation*}
Note that the right side of \eqref{Eq4} assumes its smaller value when $\f$ is $\F$-Frobenius classical with respect to $\Sigma_s$. As a matter of fact, bound \eqref{Eq4} improves bound \eqref{Eq1} in many instances when $\f$ is Frobenius classical, see \cite{AB1,Bor,Gar,GV1}.
However, the main obstacle for the application of the Stöhr-Voloch method is the computation of the $\F$-Frobenius order sequence of a curve, which in general  can be a difficult task. In fact, the complete classification of Frobenius nonclassical curves still an open problem even for $s=1$, i.e., the Frobenius nonclassical curves with respect to lines are not completely classified. 

So far, the characterization of Frobenius nonclassical curves is only known for certain families of curves, especially for small values of $s$, see e.g. \cite{AB2, AB1, Bor, Gar,GV1}. This classification allows us to achieve a result that serves two significant purposes, as explained in \cite{AB1,Bor}. The  $\F$-Frobenius classical curves are those that possess a better bound for the number of rational points. On the other hand, the $\F$-Frobenius nonclassical curves are those for which there potentially exist a large number of rational points.

Taking into account the Fermat curves $\mathcal{F}:ax^n+by^n=1$, \cite[Theorem 2, Theorem 3]{GV1} provide necessary and sufficient conditions under which this curve is  $\F$-Frobenius nonclassical with respect to lines (i.e., $\Sigma_1$) and conics (i.e., $\Sigma_2$). In turn, in \cite[Theorem 1.2]{AB1}, a characterization of $\F$-Frobenius nonclassical Fermat curves with respect to cubics is presented.

This paper deals with the classification of Frobenius nonclassical generalized Fermat curves, in the sense presented in \cite{AP}, with respect to lines and conics. More precisely, we consider the curves defined by $ax^n+by^m=1$ with $a,b \in \F^*$ where $m,n$ are positive integers, as well as the curves  defined by $ax^ny^m+bx^n+cy^m=1$, $a,b,c \in \F$, $c \neq -\frac{a}{b}, \:a \neq 0$ where $m,n$ are positive integers. For both curves, it will be assumed that $\min\{m,n\}>2$.

Initially, in Section \ref{pre}, we will state the concepts necessary to establish all prerequisites for conducting the proofs. A significant portion of this chapter will be based on the paper by Stöhr and Voloch \cite{SV} and the classification of $\F$-Frobenius nonclassical  curves with respect to $\Sigma_1$ and $\Sigma_2$ conducted by \cite{AB1, Gar, GV1, GV2, Par}.

Section 3, will be dedicated to the curve $\f:ax^n+by^m=1$, with $a,b \in \F^*$ where $m,n$ are positive integers. The Frobenius nonclassical curves with respect to lines of this type are already characterized, see \cite{Gar}. Thus, assuming $p>5$, we provide necessary and sufficient conditions for such curve being $\F$-Frobenius nonclassical with respect to conics. We also determine the number of $\F$-rational points in cases of $\F$-Frobenius nonclassicality. 

In Section 4 we will study curves of the type $\f:ax^ny^m+bx^n+cy^m=1$, with $a,b,c \in \F$, $ c \ne -\frac{a}{b}, \: a \ne 0$ where $m,n$ are positive integers. Curves of this type were exploited in \cite{AK} from the point of view of Finite Geometry and also in \cite{BM}, where the Stöhr-Voloch results are applied to linear systems of curves of a certain degree passing through the singularities of $\f$. Here, we show that such curves are always $\F$-Frobenius classical with respect to lines if $p>2$ and, for $p>5$ we provide a characterization of  $\F$-Frobenius nonclassical curves of this type with respect to conics. We also determine the number of $\F$-rational points for the cases of $\F$-Frobenius nonclassicality with respect to conics. 

Throughout this work, we will employ the following notation:
\begin{itemize}
    \item $\F$ denotes the finite field with $q=p^h$ elements, where $h \geq 1$ and $p$ is a prime (the characteristic of the field $\mathbb{F}_q$);
    \item $\overline{\mathbb{F}}_q$ denotes the algebraic closure of $\F$;
    \item $N_q(\f)$ stands for the number of $\F$-rational points on a nonsingular model of the curve  $\f$;
    \item $\mathbb{H}(\f)$ is the function field of $\f$ over $\mathbb{H}$, with $\f$ being an irreducible curve over $\F$ and $\mathbb{H}$ an algebraic extension of $\F$;
    \item $\Sigma_s$ refers to the linear system of all curves of degree $s$ in $\mathbb{P}^2(\overline{\mathbb{F}}_q)$;
    \item $I(P,C \cap D)$ denotes the intersection multiplicity of the plane curves $C$ and $D$ at point $P$;
    \item $D^{(r)}_tf$ is the $r$-th Hasse derivative of $f \in \overline{\mathbb{F}}_q(\f)$ with respect to the separating variable  $t$ of $ \overline{\mathbb{F}}_q(\f)$;
    \item $\mathbb{P}^n(\mathbb{K})$ is the projective space of dimension $n$ over a field $\mathbb{K}.$
\end{itemize}

\section{Preliminaries}\label{pre}

Most of the results in this section are based on \cite{AB1,GV1,HKT,SV}. The proofs will be omitted here, as they cannot only be found in the cited references, but also in many other standard sources on this subject.

Let $\mathcal{F}:f(X,Y,Z)=0$ be a projective absolutely irreducible plane curve, where $f(X,Y,Z)$ is a homogeneous irreducible polynomial of degree $n$ defined over $\F$. The function field $\F(\f)$ of $\f$ is given by $\F(x,y)$ with $f(x,y,1)=0$, where $x$ and $y$ are respectively the residues of $X/Z$ and $Y/Z$ in $\F(\f)$. In this case, at least one of the elements $x$ or $y$ is a separating element of $\F(\f)$. We alternatively can define $\mathcal{F}:f(x,y,1)=0$ meaning that $\mathcal{F}$ is the projective closure of the affine curve $f(x,y,1)=0$.

Consider the linear system $\Sigma_s$ of all projective curves of degree $s$, such that $s \in \{1,\ldots, n-1\}$. Given $P \in \mathcal{F}$, an integer $j(P)$ is called a $(\Sigma_s,P)$-order if there exists a curve $\mathcal{C}$ of degree $s$ such that $I(P,\mathcal{F} \cap \mathcal{C})=j(P)$. Denoting $j_i:=j_i(P)$, in \cite[section 1]{SV} we see that there are exactly $M+1$ $(\Sigma_s,P)$-orders, where  $M=\binom{s+2}{2}-1$. The sequence of integers $(j_0,j_1, \ldots, j_M)$ is called the sequence of $(\Sigma_s,P)$-orders.
Furthermore, $j_0=0$ and there exists an unique curve $\mathcal{H}_P$ of degree $s$, called the $s$-osculating curve to $\mathcal{F}$ at $P$, such that $I(P,\mathcal{F} \cap \mathcal{H}_P)=j_M$, see \cite[section 1]{SV}.

Let $\varphi:\mathcal{F} \rightarrow \mathbb{P}^M(\overline{\F})$ be the morphism associated to $\Sigma_s$ given by $\varphi=(\varphi_0:\ldots: \varphi_M)$, with $\varphi_i \in \F(\mathcal{F})$, and let $t$ be a local parameter at a point $P \in \f$. According to \cite[Corollary 1.3]{SV}, we have:

\begin{equation}\label{EqHiper}
\mathcal{H}_P: \det \begin{pmatrix}
X_0 &\cdots &X_M\\
(D^{(j_0)}_t\varphi_0)(P) &\cdots &(D^{(j_0)}_t\varphi_M)(P)\\
\vdots &\ddots &\vdots \\
(D^{(j_{M-1})}_t\varphi_0)(P) &\cdots &(D^{(j_{M-1})}_t\varphi_M)(P)
\end{pmatrix}
=0.
\end{equation}

From \cite[Proposition 1.4 and Theorem 1.1]{SV}, there exists a sequence of integers $\varepsilon_0<\varepsilon_1<\ldots<\varepsilon_M$ chosen minimally in lexicographic order such that the Wronskian
\begin{equation} \label{Eq1.3}
\det\left(D_\tau^{(\varepsilon_i)}\varphi_j\right)_{i,j=0,\ldots,M}
\end{equation}
is nonzero, where $\tau \in \overline{\mathbb{F}}_q(\f)$ is a separating element. Such integers $\varepsilon_i$ are called $\Sigma_s$-order of $\mathcal{F}$, and the sequence $(\varepsilon_0, \ldots, \varepsilon_M)$ is the order sequence of $\mathcal{F}$ with respect to $\Sigma_s$.

For almost every point $P$ we have that $(j_0,\ldots, j_M)=(\varepsilon_0, \ldots, \varepsilon_M)$, and such points are called $\Sigma_s$-ordinary. Points for which $(j_0,\ldots, j_M)\ne (\varepsilon_0, \ldots, \varepsilon_M)$ are called $\Sigma_s$-Weierstrass. The curve $\mathcal{F}$ is said to be classical with respect to $\Sigma_s$ when $\varepsilon_i=i$ for all $i=0,1,\ldots,M$. Otherwise, we say that $\mathcal{F}$ is nonclassical  with respect to $\Sigma_s$.
From \cite[Corollary 1.7]{SV}, we have an important result that helps us determine conditions under which a curve is classical.

\begin{proposition}\label{Prop1.1}
Let $P \in \mathcal{F}$ and $j_0, \ldots, j_M$ be the sequence of $(\Sigma_s,P)$-orders. If the integer
\begin{equation*}
    \prod \limits_{i>k}\frac{j_i-j_k}{i-k}
\end{equation*}
is not divisible by $p$, then $\mathcal{F}$ is classical with respect to $\Sigma_s$.
\end{proposition}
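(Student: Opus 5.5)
The plan is to follow Stöhr–Voloch's argument, reducing classicality to a $p$-adic property of the orders. The key tool is the basic structural fact about the order sequence $(\varepsilon_0,\ldots,\varepsilon_M)$, namely \cite[Theorem 1.5]{SV}: for every point $P$ of $\mathcal{F}$ and every $i$ we have $\varepsilon_i \le j_i(P)$. Equivalently, the Wronskian attached to the orders $\varepsilon_0,\dots,\varepsilon_M$ is the first nonvanishing one, and at each point it specializes to the matrix of Hasse derivatives at $P$.

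Argue by contradiction. Suppose $\mathcal{F}$ is nonclassical with respect to $\Sigma_s$, so that $(\varepsilon_0,\ldots,\varepsilon_M)\neq(0,1,\ldots,M)$. Then the Wronskian $\det\left(D^{(i)}_\tau\varphi_j\right)_{i,j}$ built from the classical exponents $0,1,\ldots,M$ vanishes identically as an element of $\overline{\mathbb{F}}_q(\mathcal{F})$.

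Now localize at the given point $P$. Take $t$ a local parameter at $P$ and change coordinates in $\mathbb{P}^M$ by an invertible linear map; this only multiplies the Wronskian by a nonzero constant. After this change we may assume that $\varphi_i$ has order exactly $j_i$ at $P$, say $\varphi_i = t^{j_i}u_i$ with $u_i(P)\neq 0$. This is possible precisely because the $j_i$ are the $(\Sigma_s,P)$-orders.

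The next step is the key computation. The chain rule for Hasse derivatives changes the Wronskian with respect to $\tau$ into the one with respect to $t$ up to a nonzero factor. So $\det\left(D_t^{(i)}\varphi_k\right)$ must also vanish identically. Expand the entries as power series in $t$. The lowest order term of the determinant has degree $\sum_k j_k-\sum_i i$, and its coefficient is
\[
\prod_k u_k(P)\cdot\det\left(\binom{j_k}{i}\right)_{i,k=0,\ldots,M}.
\]
By the standard Vandermonde-type identity this determinant equals $\prod_{i>k}\frac{j_i-j_k}{i-k}$, read modulo $p$. Indeed, $\binom{j}{i}$ is a polynomial of degree $i$ in $j$ with leading coefficient $1/i!$, and the resulting factor $\prod_i 1/i!$ equals $\prod_{i>k}1/(i-k)$.

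By hypothesis this integer is not divisible by $p$, so the lowest coefficient is nonzero. Hence the Wronskian is not identically zero, which is a contradiction. Therefore $\varepsilon_i=i$ for all $i$, that is, $\mathcal{F}$ is classical.

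The main obstacle is verifying carefully that the leading coefficient is exactly this binomial determinant. One must also check that it is an integer whose reduction mod $p$ is meaningful, even though individual $i!$ may be divisible by $p$. I would handle this by working over $\mathbb{Z}$: write $t^{j}$ and use $D^{(i)}t^{j}=\binom{j}{i}t^{j-i}$. Then compute $\det\binom{j_k}{i}$ as an integer via column operations reducing to the Vandermonde determinant divided by $\prod i!$, and only afterwards reduce mod $p$.
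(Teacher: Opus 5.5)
Your argument is correct and is essentially the Stöhr--Voloch proof that the paper only cites as \cite[Corollary 1.7]{SV}: the lowest-order coefficient at $P$ of the classical Wronskian $\det\bigl(D_t^{(i)}\varphi_k\bigr)$ is $\prod_k u_k(P)\cdot\det\binom{j_k}{i}=\prod_k u_k(P)\prod_{i>k}\frac{j_i-j_k}{i-k}$, so if this integer is not divisible by $p$, lexicographic minimality forces $(\varepsilon_0,\ldots,\varepsilon_M)=(0,1,\ldots,M)$. Two minor points: the inequality $\varepsilon_i\le j_i(P)$ that you announce as the key tool is never actually used, and to get $v_P(\varphi_k)=j_k$ exactly you should first normalize at $P$ by multiplying all $\varphi_k$ by $t^{-e_P}$, where $e_P=\min_k v_P(\varphi_k)$; this only rescales the Wronskian by $t^{-(M+1)e_P}$, and the determinant identity is invariant under shifting all $j_k$ by the same constant in any case.
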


By \cite[Proposition 2.1]{SV} there exist integers $\nu_0,\ldots,\nu_{M-1}$, with $\nu_0<\ldots<\nu_{M-1}$ chosen minimally in lexicographic order, such that the function
\begin{equation} \label{Eq1.4}
\det \begin{pmatrix}
\varphi_0^q &\cdots &\varphi_M^q \\
D^{(v_0)}_\tau\varphi_0 &\cdots &D^{(v_0)}_\tau\varphi_M\\
\vdots &\ddots &\vdots \\
D^{(v_{M-1})}_\tau\varphi_0 &\cdots &D^{(v_{M-1})}_\tau\varphi_M
\end{pmatrix}
\end{equation}
 is non‑zero, where $\tau \in \overline{\mathbb{F}}_q(\f)$ is a separating element. Moreover,
\[
(\nu_0,\ldots,\nu_{M-1})=(\varepsilon_0,\ldots,\varepsilon_M)\setminus\{\varepsilon_I\}
\qquad\text{for some } I\in\{1,\ldots,M\}.
\]
The sequence of integers $(\nu_0,\ldots,\nu_{M-1})$ is called the $\F$‑Frobenius order sequence of $\f$ with respect to $\Sigma_s$.  If $\nu_i=i$ for every $i=0,\ldots,M-1$, we say that the curve $\f$ is $\F$‑Frobenius classical with respect to $\Sigma_s$; otherwise, we say that $\f$ is $\F$‑Frobenius nonclassical.

It is important to note that, by \cite{HKT}, we have the following.

\begin{proposition}\label{Prop1.4}
Let $p>M$.  If $\f$ is $\F$‑Frobenius nonclassical with respect to $\Sigma_s$, then $\f$ is nonclassical with respect to $\Sigma_s$.
\end{proposition}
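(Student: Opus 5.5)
We prove the contrapositive: if $p>M$ and $\f$ is classical with respect to $\Sigma_s$, then $\f$ is $\F$-Frobenius classical. The plan is to use the relation between the order sequence and the Frobenius order sequence recalled above from \cite[Proposition 2.1]{SV}, namely
\[
(\nu_0,\ldots,\nu_{M-1})=(\varepsilon_0,\ldots,\varepsilon_M)\setminus\{\varepsilon_I\}
\]
for some $I\in\{1,\ldots,M\}$.

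The first step is the case $I=M$. If $\f$ is classical, then $\varepsilon_i=i$ for all $i$. Removing $\varepsilon_M=M$ leaves $\nu_i=i$ for $i=0,\ldots,M-1$, so $\f$ is Frobenius classical.

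The second step is to rule out $I<M$. Suppose $I<M$. Then $\nu_{M-1}=\varepsilon_M=M$, and some index $I\le M-1$ is skipped. The key input is the $p$-adic criterion of \cite{SV}, proved the same way as the statement that the $\varepsilon_i$ form a "$p$-closed" sequence. The criterion says: if $\nu$ is a Frobenius order and $\mu$ is an integer with $\binom{\nu}{\mu}\not\equiv 0\pmod p$, then $\mu$ is also a Frobenius order. This holds because the rows $D^{(\nu)}_\tau\varphi_j$ of the determinant \eqref{Eq1.4} can be replaced using the identity
\[
D^{(\mu)}D^{(\nu-\mu)}=\binom{\nu}{\mu}D^{(\nu)},
\]
together with the minimality of the lexicographic choice. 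Apply this with $\nu=M$ and $\mu=I$. Since $0<I<M<p$, the binomial coefficient $\binom{M}{I}$ is a product of integers all less than $p$ divided by such integers, so it is nonzero mod $p$. Hence $I$ is a Frobenius order, which contradicts the fact that $I$ was skipped. Therefore $I=M$, and $\f$ is Frobenius classical.

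The main obstacle is the second step, namely justifying the $p$-closedness property for the Frobenius orders $\nu_i$. For the $\varepsilon_i$ this is a standard lemma in \cite{SV}. For the $\nu_i$ I would redo the argument directly: assume $I$ is not a Frobenius order, so the row $D^{(I)}\varphi$ lies in the span of $\varphi^q$ and of the rows of lower Frobenius orders. Apply $D^{(M-I)}$ to this relation, using that $D^{(k)}(\varphi^q)=0$ for $0<k<q$ and expanding by the Leibniz rule. This expresses $\binom{M}{I}D^{(M)}\varphi$ in the span of rows of order less than $M$, up to terms involving derivatives of the coefficients. Showing those terms also lie in the span requires an induction on the orders, and this bookkeeping is the delicate part.

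If that bookkeeping becomes messy, a fallback is to cite the known result \cite[Theorem 7.65]{HKT} that $\nu_i\in\{\varepsilon_0,\ldots,\varepsilon_M\}$ with $p$-closedness, which the paper itself invokes via \cite{HKT}.
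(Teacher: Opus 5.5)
Your proof is correct. The paper gives no argument of its own for this statement; it simply attributes it to \cite{HKT}. Your contrapositive (in the classical case, the order $\varepsilon_I$ removed to obtain the Frobenius orders must be $\varepsilon_M$) is the standard reasoning behind that reference, so your write-up supplies a self-contained proof. Two remarks follow.

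First, the step you call delicate is immediate. Write $D^{(I)}_\tau\varphi=c\,\varphi^q+\sum_{i<I}c_iD^{(i)}_\tau\varphi$ with $c,c_i\in\overline{\mathbb{F}}_q(\f)$ and apply $D^{(M-I)}_\tau$. The Leibniz rule gives $D^{(M-I)}_\tau(c\,\varphi^q)=D^{(M-I)}_\tau(c)\,\varphi^q$, because $D^{(k)}_\tau(\varphi^q)=0$ for $0<k\le M-I<p\le q$. It also gives $D^{(M-I)}_\tau(c_iD^{(i)}_\tau\varphi)=\sum_{b}D^{(M-I-b)}_\tau(c_i)\binom{i+b}{i}D^{(i+b)}_\tau\varphi$ with $i+b\le M-1$. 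Spans are taken over the function field, so all these terms already lie in the span of $\varphi^q$ and $D^{(k)}_\tau\varphi$ for $k\le M-1$, and no induction is needed. Since $M$ is a Frobenius order when $I<M$ and $\binom{M}{I}\not\equiv 0 \pmod p$, the contradiction follows. This argument proves the $p$-closedness of Frobenius orders only when $\nu-\mu<q$. That is all you use, so you need not rely on (or cite) the general criterion.

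Second, there is an even shorter route. From the relation above, $\varphi^q=\sum_{i\le I}e_iD^{(i)}_\tau\varphi$ with $e_I\neq 0$; indeed $c\neq 0$, since otherwise $D^{(I)}_\tau\varphi$ would lie in the span of lower derivatives, contradicting classicality. Applying $D^{(1)}_\tau$ once kills the left side. On the right, the coefficient of $D^{(I+1)}_\tau\varphi$ is $(I+1)e_I\neq 0$, since $I+1\le M<p$. This contradicts the linear independence of $\varphi,D^{(1)}_\tau\varphi,\dots,D^{(M)}_\tau\varphi$ given by classicality.
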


Now we state the main results on the nonclassicality and $\F$‑Frobenius nonclassicality of $\mathcal{F}:aX^n+bY^n=Z^n$ (Fermat Curve) with respect to $\Sigma_1$ and $\Sigma_2$. We assume that $n$ is not divisible by $p$. We begin with the nonclassicality of $\mathcal{F}$ with respect to $\Sigma_1$, developed in \cite{Par}.

\begin{proposition}\label{Propretas}
Assuming $p\ne 2$, $\mathcal{F}$ is nonclassical with respect to $\Sigma_1$ if and only if
$$n \equiv 1 \: (\mathrm{mod}\,p).$$
\end{proposition}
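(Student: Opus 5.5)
We would prove Proposition \ref{Propretas} by computing the order sequence with respect to lines directly. For $\Sigma_1$ we have $M=2$, and the orders are $\varepsilon_0=0<\varepsilon_1=1<\varepsilon_2$. The curve is nonclassical exactly when $\varepsilon_2>2$. Equivalently, with the separating variable $x$ (separable since $p\nmid n$), the Hasse derivative $D_x^{(2)}y$ must vanish identically in $\overline{\mathbb{F}}_q(\f)$. Indeed, the Wronskian of $(1,x,y)$ with rows $D^{(0)},D^{(1)},D^{(2)}$ reduces to $D_x^{(2)}y$, so classicality is equivalent to $D_x^{(2)}y\neq 0$.

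The first step is to compute $D_x^{(1)}y$ and $D_x^{(2)}y$ by implicit differentiation of $ax^n+by^n=1$. Differentiating once gives $y'=-\frac{a x^{n-1}}{b y^{n-1}}$. For $p\ne 2$ the second Hasse derivative equals $y''/2$, so it suffices to decide when $y''=0$. Differentiating $y^{n-1}y'=-\frac{a}{b}x^{n-1}$ gives
\[
(n-1)y^{n-2}(y')^2+y^{n-1}y''=-\tfrac{a}{b}(n-1)x^{n-2}.
\]
Substituting $y'$ and using $by^n=1-ax^n$, we expect
\[
y''=-\frac{(n-1)\,a\,x^{n-2}}{b^2y^{2n-1}}\bigl(by^n+ax^n\bigr)=-\frac{(n-1)\,a\,x^{n-2}}{b^2y^{2n-1}}.
\]
Hence $y''=0$ in the function field if and only if $(n-1)=0$ in $\mathbb{F}_q$, that is, $n\equiv 1 \pmod p$. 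This gives both directions of the statement at once.

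The only delicate point is justifying that, in odd characteristic, the vanishing of $D_x^{(2)}y$ is equivalent to nonclassicality. This follows from the identity $D^{(2)}=\tfrac12 (D^{(1)})^2$ when $2$ is invertible. It also uses the fact that the order sequence is determined by the first nonvanishing Wronskian of $(1,x,y)$. Beyond that we only need that $x,y$ are nonzero functions, which is clear.

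As a cross-check, we could use Proposition \ref{Prop1.1}. At a point where $x=0$, the tangent line $y=\text{const}$ meets $\f$ with multiplicity $n$, so the orders there are $(0,1,n)$. The product $\frac{(n-0)(n-1)(1-0)}{2\cdot1\cdot1}$ is prime to $p$ when $n\not\equiv 1$ (since $p\nmid n$ and $p\ne2$), so $\f$ is classical in that case. The direct computation is still needed for the converse.
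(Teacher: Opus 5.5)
Your proof is correct. The paper does not prove this proposition: it is quoted in the preliminaries from \cite{Par}, with proofs explicitly omitted. So your argument replaces a citation rather than reconstructing something in the text.

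It does, however, use exactly the criterion the paper adopts later in Proposition~\ref{Prop3.1}. The Wronskian of $(1,x,y)$ with orders $(0,1,2)$ is $D_x^{(2)}y$, so nonclassicality w.r.t.\ $\Sigma_1$ is equivalent to $D_x^{(2)}y=0$.

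Your formula $y''=-(n-1)ax^{n-2}/(b^2y^{2n-1})$ checks out. For $f=ax^n+by^n-1$, the numerator $f_{xx}f_y^2-2f_{xy}f_xf_y+f_{yy}f_x^2$ equals $n^3(n-1)ab\,x^{n-2}y^{n-2}(ax^n+by^n)$. You also place the hypotheses correctly:
$p\nmid n$ makes $x$ separating and lets you solve for $y'$; $p\neq 2$ gives $D^{(2)}=\tfrac12 D^{(1)}\circ D^{(1)}$; and $ab\neq 0$ is needed at the last step.

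Working with this affine (bordered-Hessian) expression rather than the projective Hessian $H(F)$ is essential for the converse. Euler's relation gives
\[
Z^2H(F)\equiv (n-1)^2\det\begin{pmatrix}F_{XX}&F_{XY}&F_X\\ F_{XY}&F_{YY}&F_Y\\ F_X&F_Y&0\end{pmatrix} \pmod F .
\]
Hence $H(F)$ vanishes identically on every irreducible plane curve of degree $n\equiv 1\pmod p$, classical or not, and carries no information in exactly the case you need. Your computation settles both directions at once.

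The same computation for $ax^n+by^m=1$ gives a numerator equal to a nonzero function times $(n-1)m+(m-n)ax^n$. This recovers the condition $p\mid n-1$, $p\mid m-1$ from \cite{Gar} that the paper invokes in Proposition~\ref{Prop2.4}(d).

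Your cross-check via Proposition~\ref{Prop1.1} is correct, but as you note it only yields the classical direction.
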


According to \cite[Theorem 2]{GV1} and \cite{Bor}, we have the following result on the $\F$‑Frobenius nonclassicality of $\mathcal{F}$ with respect to $\Sigma_1$.

\begin{theorem}\label{Theoretas}
$\mathcal{F}$ is $\F$‑Frobenius nonclassical with respect to $\Sigma_1$ if and only if 
$$n=\frac{q-1}{p^r-1}$$
for some integer $r<h$, with $r\mid h$, and $a,b \in \mathbb{F}_{p^r}$.
\end{theorem}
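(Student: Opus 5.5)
We argue with the standard criterion for Frobenius nonclassicality with respect to lines. Since the $\F$-Frobenius order sequence for $\Sigma_1$ is $(0,\nu_1)$ with $\nu_1\in\{1,\varepsilon_2\}$, the curve $\f$ is $\F$-Frobenius nonclassical with respect to $\Sigma_1$ exactly when the determinant in \eqref{Eq1.4} with $\nu_1=1$ vanishes. With $\varphi=(x:y:1)$ and separating variable $x$, this says
$$(x^q-x)\,\frac{dy}{dx}=y^q-y$$
in $\overline{\mathbb{F}}_q(\f)$. Geometrically, the Frobenius image of a generic point lies on the tangent line there. From $ax^n+by^n=1$ we get $dy/dx=-ax^{n-1}/(by^{n-1})$. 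Clearing denominators, the condition becomes the polynomial identity
$$a x^{n-1}(x^q-x)+b y^{n-1}(y^q-y)\equiv 0 \pmod{ax^n+by^n-1},$$
that is, $ax^{q+n-1}+by^{q+n-1}-1\equiv 0$ modulo the Fermat equation. In homogeneous form this reads $aX^{q+n-1}+bY^{q+n-1}-Z^{q+n-1}\in (aX^n+bY^n-Z^n)$.

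\textbf{Sufficiency.} Suppose $n=(q-1)/(p^r-1)$ with $r\mid h$, $r<h$, and $a,b\in\mathbb{F}_{p^r}$. Set $Q=p^r$, so that $q-1=n(Q-1)$ and $q+n-1=nQ$. Then
$$aX^{nQ}+bY^{nQ}-Z^{nQ}=(a^{1/Q}X^n+b^{1/Q}Y^n-Z^n)^Q .$$
Because $a,b\in\mathbb{F}_Q$, we have $a^{1/Q}=a$ and $b^{1/Q}=b$. Hence the left side equals $(aX^n+bY^n-Z^n)^Q$, which is divisible by the Fermat form.

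\textbf{Necessity.} Suppose the divisibility holds. Reduce $aX^{N}+bY^{N}-Z^{N}$, with $N=q+n-1$, modulo the relation $Z^n=aX^n+bY^n$. Write $N=kn+e$ with $0\le e<n$. For the remainder to vanish, the terms of $Z^e(aX^n+bY^n)^k$ must cancel exactly $aX^N+bY^N$.
\begin{itemize}
\item First we need $e=0$, so $n\mid q-1$. Otherwise the monomials $Z^eX^{in}Y^{(k-i)n}$ survive as independent normal forms.
\item Then we need $(aX^n+bY^n)^k=aX^{kn}+bY^{kn}$ as polynomials. This forces all middle binomial coefficients $\binom{k}{i}$ to vanish mod $p$, so $k=p^r$ is a power of $p$. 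It also forces $a^{p^r}=a$ and $b^{p^r}=b$.
\end{itemize}
Thus $q+n-1=np^r$, i.e. $n=(q-1)/(p^r-1)$. Since $n>1$ we get $r<h$. Since $n$ is an integer, $p^r-1\mid p^h-1$, which gives $r\mid h$. Finally $a,b\in\mathbb{F}_{p^r}$.

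\textbf{Main obstacle.} The delicate step is necessity, specifically turning "the remainder is zero" into "the middle binomial coefficients vanish". We must check that the monomials appearing after reduction are linearly independent modulo the ideal. Choosing normal forms with $Z$-degree less than $n$, i.e. viewing the coordinate ring as a free $\overline{\mathbb{F}}_q[X,Y]$-module with basis $1,Z,\dots,Z^{n-1}$, makes this clean.

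A separate point needs care: if $x$ is not separating, or if $p\mid n$, the derivative computation changes. The standing assumption $p\nmid n$ guarantees that both $x$ and $y$ are separating, so the tangent-line criterion above is valid.
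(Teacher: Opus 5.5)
Your proof is correct. The paper does not prove this statement; it quotes it from Garcia--Voloch and Borges. Your first step, reducing Frobenius nonclassicality to the vanishing of $ax^{q+n-1}+by^{q+n-1}-1$ on $\f$, is exactly what the paper does for its own analogues (Propositions \ref{Prop2.1'}, \ref{Prop2.8'}, \ref{Prop3.6}).

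The difference is in the final algebraic step. The paper writes the exponent as $p^r\alpha$ with $p\nmid\alpha$ and takes $p^r$-th roots in the function field to get $a^{1/p^r}x^{\alpha}+b^{1/p^r}y^{\alpha}-1=0$. It then uses that this polynomial is absolutely irreducible, so it must be a scalar multiple of $ax^n+by^n-1$. That gives $\alpha=n$, $a^{1/p^r}=a$ and $b^{1/p^r}=b$ in one stroke. You instead reduce modulo $Z^n=aX^n+bY^n$ using the free basis $1,Z,\dots,Z^{n-1}$, which holds because the relation is monic in $Z$. From this you read off $n\mid q-1$, and the vanishing of the middle binomial coefficients of $(aX^n+bY^n)^k$ forces $k=p^r$.

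Your route is entirely elementary and needs no irreducibility input beyond that of $\f$ itself. The paper's route is shorter. It also carries over more readily to the curves $ax^n+by^m=1$ and $ax^ny^m+bx^n+cy^m=1$ of Sections 3--4, where mixed exponents would make normal-form bookkeeping heavier. There the paper combines irreducibility with the separability of $x$ and $y$ to force equal $p$-adic valuations.

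One small point to state explicitly: $k=(q+n-1)/n>1$, so the $r$ you obtain satisfies $r\geq 1$.
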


From \cite[Section 1]{Gar} and \cite{Bor}, for a curve $\mathcal{C}: aX^n+bY^mZ^{n-m}=Z^{n}$ with $p \nmid mn$ and $n \geq m$ 
we obtain:

\begin{theorem}\label{Theoretas1}
$\mathcal{C}$ is $\F$‑Frobenius nonclassical with respect to $\Sigma_1$ if and only if  $$n=m=\frac{q-1}{p^r-1}$$
for some integer $r<h$, with $r\mid h$, and $a,b \in \mathbb{F}_{p^r}^{*}$.
\end{theorem}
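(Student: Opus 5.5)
The plan is to reduce everything to the Fermat case of Theorem \ref{Theoretas}. The key step is to show that $\F$-Frobenius nonclassicality with respect to $\Sigma_1$ forces $n=m$. Once $n=m$, the curve $\mathcal{C}$ is exactly the Fermat curve $aX^n+bY^n=Z^n$, and Theorem \ref{Theoretas} gives the criterion $n=(q-1)/(p^r-1)$ with $r\mid h$, $r<h$ and $a,b\in\mathbb{F}_{p^r}$. Here $a,b\neq 0$ holds automatically. This settles both directions in the case $n=m$, and sufficiency is then immediate.

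\textbf{Analytic reformulation.} For necessity, I would restate Frobenius nonclassicality with respect to lines in the usual way. Since $p\nmid mn$, the coordinate $x$ is a separating variable. The determinant \eqref{Eq1.4} for $\varphi=(1:x:y)$, with $\nu_0=0,\nu_1=1$, vanishes if and only if the tangent line at a generic point $(x,y)$ passes through $(x^q,y^q)$. With $f=ax^n+by^m-1$, this means the function
\[
G:=an\,x^{n-1}(x^q-x)+bm\,y^{m-1}(y^q-y)
\]
is identically zero in $\overline{\mathbb{F}}_q(\mathcal{C})$.

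\textbf{Forcing $n=m$.} Suppose $n>m$ and compare valuations at a place $P$ of $\overline{\mathbb{F}}_q(\mathcal{C})$ lying over the point at infinity $(0:1:0)$. From $ax^n+by^m=1$, both $x$ and $y$ have poles at $P$, and $v_P(x^n)=v_P(y^m)$. So there is a positive rational $e$ with $v_P(x)=-me$ and $v_P(y)=-ne$. Since $q>1$, the dominant parts of the two summands of $G$ are $an\,x^{n+q-1}$ and $bm\,y^{m+q-1}$, whose coefficients are nonzero because $p\nmid mn$. Their valuations are
\[
-(n+q-1)me \quad\text{and}\quad -(m+q-1)ne,
\]
and these differ by $(q-1)(n-m)e\neq 0$. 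The correction terms $-an\,x^n$ and $-bm\,y^m$ have strictly larger valuation in each summand, so the two summands of $G$ have distinct valuations. Hence $G\neq 0$, a contradiction, and therefore $n=m$.

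\textbf{Main obstacle.} The step needing the most care is justifying the valuation computation at infinity. I need to check that the places over $(0:1:0)$ really have $v_P(x),v_P(y)<0$ with $v_P(x^n)=v_P(y^m)$; this follows from $ax^n=1-by^m$, which gives a pole of $y$ if and only if a pole of $x$. I also need a clean way to handle possibly several branches. An alternative, if that gets messy, is to work affinely: substitute $y^m=(1-ax^n)/b$ and require $G\cdot y^{\,\cdot}$ to be divisible by $f$ in $\overline{\mathbb{F}}_q[x,y]$, then compare top-degree terms in the weighted degree $\deg x=m$, $\deg y=n$. This is the same computation in algebraic form and yields the same contradiction whenever $n\neq m$.
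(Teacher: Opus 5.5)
Your proof is correct. It necessarily takes a different route from the paper, because the paper gives no argument for this statement: it simply quotes it from \cite[Section 1]{Gar} and \cite{Bor}.

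You isolate the only content beyond the Fermat case, namely that $n>m$ never gives Frobenius nonclassicality with respect to lines, and you prove it directly. Take $G=an\,x^{n-1}(x^q-x)+bm\,y^{m-1}(y^q-y)$. At a pole $P$ of $x$, the relation $ax^n+by^m=1$ forces $n\,v_P(x)=m\,v_P(y)<0$. Then $bm\,y^{m+q-1}$ is the unique term of $G$ of minimal valuation, so $G\neq 0$. The case $n=m$ is then literally Theorem~\ref{Theoretas}, which the paper also only quotes. Since $a,b\neq 0$, the condition $a,b\in\mathbb{F}_{p^r}$ becomes $a,b\in\mathbb{F}_{p^r}^{*}$.

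The point you flag as the main obstacle is not one:
\begin{itemize}
\item A single place suffices, and any pole of $x$ works; one exists because $x$ is nonconstant.
\item The equality $v_P(ax^n)=v_P(by^m)$ is just the strict triangle inequality applied to a sum of valuation $0$.
\end{itemize}
So you need neither an analysis of the branches at $(0:1:0)$ nor the weighted-degree fallback.

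Your route gives a short, self-contained reduction that rests only on the Fermat classification. It is in the same spirit as the degree comparisons the paper itself uses for conics (e.g.\ Proposition~\ref{Prop2.10'}). The paper's citation keeps the text short, but leaves the exclusion of $n\neq m$ unexplained within it.
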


We now recall the results for the linear system of conics $(\Sigma_2)$. By \cite[Theorem 3]{GV2} we have:

\begin{proposition}\label{PropConicas}
For $p \ge 7$, $\mathcal{F}$ is nonclassical with respect to $\Sigma_2$ if and only if 
$$p\mid (n-1)(n-2)(n+1)(2n-1).$$
\end{proposition}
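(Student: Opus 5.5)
The plan is to show the two directions separately. For the "if" direction we exhibit a vanishing Wronskian. For the "only if" direction we exhibit, under $p\nmid (n-1)(n-2)(n+1)(2n-1)$, a point whose $(\Sigma_2,P)$-orders satisfy the hypothesis of Proposition \ref{Prop1.1}.

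Throughout, $M=5$ and $p\ge 7>M$. We work affinely with $ax^n+by^n=1$, $p\nmid n$, and take $x$ as separating variable. Implicit differentiation gives $y' = -\frac{a}{b}\,x^{n-1}y^{1-n}$. The morphism of $\Sigma_2$ is $(1:x:y:x^2:xy:y^2)$.

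\textbf{Sufficiency.} Since $\varepsilon_0,\ldots,\varepsilon_4$ are classical for conics when $p\geq 7$, nonclassicality with respect to $\Sigma_2$ amounts to $\varepsilon_5>5$. For $p>5$ this is equivalent to the identical vanishing of the order-$5$ Wronskian. In turn, this is the same as the vanishing of a classical ODE-type expression in $y',\ldots,y^{(5)}$ (the conic "Monge" expression). A more convenient route is the following. The orders at a general point are $0,1,2,3,4,\varepsilon_5$, and $\varepsilon_5\ge 6$ iff through a generic point there is a conic with contact $\ge 6$. Equivalently, the coefficients of the osculating conic, written using Hasse derivatives of $y$ up to order $5$, satisfy one polynomial identity. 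I would compute $D^{(k)}_x y$ for $k\le 5$ in the form $y\cdot R_k(u)$, where $u=\frac{-a x^n}{b y^n}$ and $R_k$ is a polynomial whose coefficients are polynomials in $n$, divided by $x^k$. After homogenizing, the Monge-type determinant becomes a polynomial in $u$ whose coefficients are integer polynomials in $n$. The claim is that, after removing units, this polynomial factors as $c\,(n-1)(n-2)(n+1)(2n-1)\cdot g(u)$ with $g\ne 0$ modulo $p$. Reducing mod $p$ then gives the vanishing whenever $p$ divides the product.

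\textbf{Necessity.} Here I would use Proposition \ref{Prop1.1} at a well-chosen point instead of the full determinant. Take $P=(0:\beta:1)$ with $b\beta^n=1$. The tangent line $Y=\beta Z$ meets $\mathcal{F}$ at $P$ with multiplicity $n$, since locally $y-\beta$ is a unit times $x^n$. Conics through $P$ then give orders $0,1,2,3$ and, using the tangent line times other lines, orders $n$, $n+1$ and $2n$ (the latter if $n\ge3$; the square of the tangent line has multiplicity $2n$). So the orders are $(0,1,2,n,n+1,2n)$. The product $\prod_{i>k}(j_i-j_k)/(i-k)$ then involves the factors $n-1,n-2,n+1,2n-1,n,2n-2$, and similar ones. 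A cleaner choice is an inflection-free general point combined with points like $(1:0:\alpha)$ on the line $Y=0$, which behave symmetrically. Checking the primes appearing in the product shows they divide $n(n-1)(n-2)(n+1)(2n-1)$, together with possibly small primes less than $7$ coming from $n+1-j$ terms. Since $p\nmid n$ and $p\ge7$, nondivisibility follows, and so does classicality.

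\textbf{Main obstacle.} The hardest step is the sufficiency computation: obtaining a clean factorization of the $5$th-order Wronskian. To keep it tractable, I would apply the change of variables $X=x^n$, i.e. use the fact that the curve is a Kummer cover of the line $aX+bY=1$. This reduces the Wronskian to symmetric functions of $n$ times Hasse derivatives of the monomials $x^n$ and $x^{2n}$. The necessity step needs care too: the point orders must be verified exactly, and the specific point must be chosen so that only the four listed factors (together with $n$) arise.
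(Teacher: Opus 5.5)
Your necessity half is the paper's own argument. The paper only cites Garcia--Voloch for this statement, but its Proposition~\ref{Prop2.2} (taken with $m=n$) does exactly what you do. At $P=(0:\beta:1)$ the $(\Sigma_2,P)$-orders are $(0,1,2,n,n+1,2n)$, which are distinct because $n\ge 3$. One computes
\[
\prod_{i>k}\frac{j_i-j_k}{i-k}=\frac{8\,n^4(n-1)^4(n-2)(n+1)(2n-1)}{34560},\qquad 34560=2^8\cdot 3^3\cdot 5 .
\]
So for $p\ge 7$ and $p\nmid n$, Proposition~\ref{Prop1.1} gives classicality whenever $p\nmid (n-1)(n-2)(n+1)(2n-1)$. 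That single point suffices, so you can clean up two things. Drop the stray order $3$, which does not occur when $n>3$. Drop the ``cleaner choice'' remark too: at a general point the orders are the unknown $\varepsilon_i$, so Proposition~\ref{Prop1.1} gives nothing there.

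Your sufficiency half takes a genuinely different route, and it works, but as written its decisive step (the factorization) is only asserted. Here is what the computation gives.
\begin{itemize}
\item With $u=-ax^n/(by^n)$ one has $y'=yu/x$ and $u'=nu(1-u)/x$.
\item Hence $y^{(k)}=y\,x^{-k}P_k(u)$ with $P_{k+1}=(u-k)P_k+nu(1-u)P_k'$.
\item This gives $P_2=(1-n)u(u-1)$, and $P_k=(1-n)u(u-1)R_k$ for $k\ge2$, where $R_2=1$ and $R_{k+1}=\bigl((1-2n)u+n-k\bigr)R_k-n(u^2-u)R_k'$.
\end{itemize}
Carrying this to $k=5$ yields
\[
9(y'')^2y^{(5)}-45y''y'''y^{(4)}+40(y''')^3=(1-n)^3(2n-1)(n-2)(n+1)\,\frac{y^3}{x^9}\,u^3(u-1)^3(u+1)(2u-1)(u-2).
\]
The conic Wronskian of $(1,x,y,x^2,xy,y^2)$ is $\pm4y''$ times the left-hand side when computed with ordinary derivatives. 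The Hasse normalization only contributes the factor $1/34560$. So, up to a unit for $p\ge7$, the Wronskian equals
\[
(n-1)^4(n-2)(n+1)(2n-1)\,\frac{y^4}{x^{11}}\,u^4(u-1)^4(u+1)(2u-1)(u-2).
\]
Your claim therefore holds, except that the power is $(n-1)^4$ rather than $(n-1)$ if $g$ is to be independent of $n$. Since $u$ is transcendental over $\overline{\mathbb{F}}_q$ (because $p\nmid n$), this identity proves \emph{both} directions at once, which makes your Proposition~\ref{Prop1.1} argument redundant.

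One harmless misstatement remains. The claim that $\varepsilon_0,\dots,\varepsilon_4$ are classical for $p\ge7$ is false when $p\mid n-1$: then the $\Sigma_2$-orders are $(0,1,2,p^r,p^r+1,2p^r)$. The same goes for ``the orders at a general point are $0,1,2,3,4,\varepsilon_5$''. Neither matters, because ``nonclassical $\iff$ the order-$(0,\dots,5)$ Wronskian vanishes'' holds unconditionally.

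The paper's own machinery gets sufficiency from Proposition~\ref{Prop2.4} at $m=n$ together with Lemma~\ref{Le2.1}(a), with no computation at all.
\begin{itemize}
\item For $n=wp^s+2$, the equation $x^n+y^n=1$ becomes $(x^w)^{p^s}x^2+(y^w)^{p^s}y^2=1$, giving the conic $\alpha X^2+\beta Y^2-Z^2$ with $p^s$-th power coefficients.
\item For $n=wp^s-1$ the same works after multiplying by $xy$.
\item For $2n=wp^s+1$ it works after squaring away the $x^ny^n$ term.
\item For $p\mid n-1$ it follows from line-nonclassicality.
\end{itemize}
That route is short. It also hands you the osculating conic at a general point, which is exactly what the paper needs next for the Frobenius criterion (Propositions~\ref{Prop2.5} and \ref{Prop2.6}). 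Your route is heavier, but it is uniform in $n$ and self-contained: it needs no special point and no $p$-adic decomposition of $n$, and it yields the exact Wronskian, hence both directions in one stroke.
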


Concerning $\F$‑Frobenius nonclassicality, by \cite[Theorem 3]{GV1} and \cite[Appendix B]{AB1}:

\begin{theorem}\label{Theoconicas}
For $p \ge 7$, $\mathcal{F}$ is $\F$‑Frobenius nonclassical with respect to $\Sigma_2$ precisely in one of the following cases:
\begin{itemize}
  \item[(1)] $p \mid (n-1);$
  \item[(2)] $p \mid (n-2)$ and $n=\dfrac{2(q-1)}{p^r-1}$ with $r<h$, $r\mid h$, and $a,b \in \mathbb{F}_{p^r};$
  \item[(3)] $p \mid (2n-1)$ and $n=\dfrac{q-1}{2(p^r-1)}$ with $r<h$, $r\mid h$, and $a^2,b^2 \in \mathbb{F}_{p^r};$
  \item[(4)] $q=n+1$ and $a+b=1.$
\end{itemize}
\end{theorem}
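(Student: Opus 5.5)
The plan is to reduce Frobenius nonclassicality to ordinary nonclassicality, and then to analyze each nonclassical case separately by an explicit computation on the osculating conic. Since $M=5$ for $\Sigma_2$ and $p\ge 7>M$, Proposition \ref{Prop1.4} shows that an $\F$-Frobenius nonclassical curve $\mathcal{F}$ is nonclassical with respect to $\Sigma_2$. Proposition \ref{PropConicas} then forces $p\mid (n-1)(n-2)(n+1)(2n-1)$. Because $p\ge 7$, at most one of these four factors is divisible by $p$: any two of them differ by a number whose only prime factors are $2$ and $3$. So the proof splits into four disjoint cases according to which factor $p$ divides. In each case I must decide exactly when $\nu_4\neq 4$.

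\textbf{Criterion.} By \cite[Proposition 2.1]{SV}, $(\nu_0,\dots,\nu_4)$ is obtained from $(\varepsilon_0,\dots,\varepsilon_5)$ by deleting one $\varepsilon_I$. The curve is $\F$-Frobenius nonclassical exactly when the Frobenius image $\Phi(P)=(x^q,y^q)$ of a generic point $P$ lies on the osculating conic spanned by the rows with orders $\varepsilon_0,\dots,\varepsilon_4$. Equivalently, the determinant \eqref{Eq1.4} with $(\nu_i)=(0,1,2,3,4)$ vanishes identically in $\overline{\mathbb{F}}_q(\f)$. So in each case I will:
\begin{itemize}
\item first compute the order sequence $(\varepsilon_i)$;
\item then write the generic osculating conic $\mathcal{H}_P$ from \eqref{EqHiper} in terms of $x,y$;
\item finally impose $\mathcal{H}_P(x^q,y^q)=0$.
\end{itemize}
To compute the orders I use $t=x$ as the separating variable, with $y$ determined by $by^n=1-ax^n$. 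I then compute the Hasse derivatives of $1,x,y,x^2,xy,y^2$ up to order $4$. When $n\equiv c \pmod p$ with $c\in\{1,2,-1,1/2\}$, the function $x^n$ behaves like $x^{c}$ times a $p$-th power for derivatives of order $<p$. This should make the Wronskian vanish at order $4$ and give $\varepsilon_5\ge p$ (or shift $\varepsilon_4$).

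\textbf{Case (1), $p\mid n-1$.} Here $x^n=x\,(x^{(n-1)/p})^p$. Hence $ax^n+by^n=1$ becomes, modulo $p$-th powers, a \emph{linear} relation in $x,y$ with coefficients $u^p,v^p$, where $u^p=ax^{n-1}$ and $v^p=by^{n-1}$. I expect this linear relation to give $\varepsilon=(0,1,2,p,\dots)$-type orders. It should also make the osculating conic degenerate, containing the "tangent line" $ax^{n-1}X+by^{n-1}Y=1$ computed at $P$. Evaluating at $(x^q,y^q)$ and using the curve equation should give the vanishing automatically, with no arithmetic condition. This is consistent with item (1).

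\textbf{Cases (2), (3), (4).} Here the osculating conic is genuinely nondegenerate. For $p\mid n-2$, $p\mid 2n-1$ and $p\mid n+1$ respectively, the conic comes from rewriting the curve as a quadratic relation modulo $p$-th powers:
\begin{itemize}
\item for $p\mid n-2$: $ax^{n-2}\,x^2+by^{n-2}\,y^2=1$;
\item for $p\mid 2n-1$: after squaring, a relation in $\sqrt{x},\sqrt{y}$, producing a conic through $x^{-?}$-type terms;
\item for $p\mid n+1$: $ax^{n+1}x^{-1}+\dots$, giving a conic of the form $\alpha XY+\beta X+\gamma Y=\dots$.
\end{itemize}
In each case I substitute $X=x^q$, $Y=y^q$ and obtain a polynomial identity in $x,y$ on the curve. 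For example, in case (2) the identity is $ax^{n-2}x^{2q}+by^{n-2}y^{2q}=1$, to be compared with $ax^n+by^n=1$. I then compare degrees and coefficients, using the irreducibility of $ax^n+by^n-1$ and a degree bound such as $n\le q$ or the Hasse--Weil-type constraint. This should force the exponent relations $n-2+2q\equiv$ a multiple of $n$, namely $n(p^r-1)=2(q-1)$, and the coefficient conditions $a^{p^r}=a$, $b^{p^r}=b$. The analogous comparisons should yield $2n(p^r-1)=q-1$ with $a^2,b^2\in\mathbb{F}_{p^r}$ in case (3), and $q=n+1$ with $a+b=1$ in case (4). For the converse in each case, I will substitute the stated parameters and verify that the determinant vanishes.

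\textbf{Main obstacle.} I expect the hardest step to be the necessity direction in cases (2)--(4). There one must show that the functional identity on the curve forces exactly the stated relation between $n$ and $q$, with $r\mid h$ and $r<h$. The first thing I would try is a degree and pole-order comparison at the points at infinity, writing the identity as a polynomial in $x$ reduced modulo $ax^n+by^n-1$. Only if that is insufficient would I fall back on a $p$-adic digit argument on the exponents.
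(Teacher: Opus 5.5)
\textbf{Case (1) fails as written.} Your criterion ``$\mathcal{F}$ is Frobenius nonclassical iff $\Phi(P)\in\mathcal{H}_P$'' agrees with the vanishing of \eqref{Eq1.4} for orders $(0,1,2,3,4)$ only when $\varepsilon=(0,1,2,3,4,\varepsilon_5)$, i.e.\ when $\mathcal{F}$ is classical with respect to lines. That is why Proposition~\ref{Prop2.6}(b) carries this hypothesis. When $p\mid n-1$, Proposition~\ref{Propretas} gives $\Sigma_1$-orders $(0,1,p^r)$. The generic $\Sigma_2$-orders are then $(0,1,2,p^r,p^r+1,2p^r)$, and the osculating conic at $P=(u:v:1)$ is the \emph{double} tangent line $(au^{n-1}X+bv^{n-1}Y-Z)^2$. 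Evaluating at the Frobenius image of the generic point gives $(ax^{n-1+q}+by^{n-1+q}-1)^2$. Its vanishing on $\mathcal{F}$ is exactly Frobenius nonclassicality with respect to lines, which by Theorem~\ref{Theoretas} requires $n=(q-1)/(p^r-1)$ and $a,b\in\mathbb{F}_{p^r}$. For instance, with $q=p=7$ and $n=8$ it would say $(a^{1/7}x^2+b^{1/7}y^2-1)^7=0$ on $\mathcal{F}$, which is impossible. So the ``automatic vanishing'' you expect does not occur, and your criterion would wrongly attach arithmetic conditions to item (1). The correct argument is one line: $\nu$ is $\varepsilon$ with one entry deleted, so $\nu_3\ge\varepsilon_3=p^r>3$. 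Equivalently, $D^{(3)}_\tau\varphi$ lies in the span of $D^{(0)}_\tau\varphi,D^{(1)}_\tau\varphi,D^{(2)}_\tau\varphi$, so \eqref{Eq1.4} with orders $(0,1,2,3,4)$ vanishes. This is how the paper handles the analogous case in Proposition~\ref{Prop2.7'}.

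\textbf{Cases (2)--(4): the decisive step is not identified.} The paper only quotes this statement, but your setup matches its route for $ax^n+by^m=1$ (Propositions~\ref{Prop2.6}, \ref{Prop2.1'}--\ref{Prop2.3'} with $m=n$). To make it legitimate you need two observations. First, $p\nmid n-1$ gives $\Sigma_1$-classicality, so the osculating-conic criterion applies. Second, the conic read off from the $p$-power rewriting meets $\mathcal{F}$ at a generic $P$ with multiplicity $\ge p>4$, hence is $\mathcal{H}_P$ (Lemma~\ref{Le2.1}(a)).

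For necessity, degree and pole-order comparisons alone will not yield $n(p^r-1)=2(q-1)$ and $a^{p^r}=a$. The mechanism that does is as follows. Write the exponent as $N=p^r\alpha$ with $p\nmid\alpha$ (e.g.\ $N=n-2+2q$). Take $p^r$-th roots in the function field to get $a^{1/p^r}x^{\alpha}+b^{1/p^r}y^{\alpha}=1$ on $\mathcal{F}$. Since $p\nmid\alpha$, this polynomial is absolutely irreducible, hence a scalar multiple of $ax^n+by^n-1$. This forces $\alpha=n$ and $a^{p^r}=a$, $b^{p^r}=b$, and integrality of $n$ then gives $r\mid h$.

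In case (3) your conic is also misdescribed; no $\sqrt{x}$ or negative powers occur. Squaring $ax^n+by^n=1$ twice gives $(A+B-1)^2=4AB$ with $A=a^2x^{2n}$, $B=b^2y^{2n}$. Hence $\mathcal{H}_P:(\alpha X+\beta Y-Z)^2=4\alpha\beta XY$, with $\alpha=a^2u^{2n-1}$ and $\beta=b^2v^{2n-1}$. The Frobenius identity then factors as $\prod(ax^e\pm by^e\pm 1)=0$ with $e=(2n-1+q)/2$, and this sign freedom is what produces $a^2,b^2\in\mathbb{F}_{p^r}$.

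In case (4) you must separate three subcases of $ax^{n+1}y^q+by^{n+1}x^q=x^qy^q$. If $n+1>q$, a nonzero polynomial of degree $<n$ cannot vanish on $\mathcal{F}$. If $n+1<q$, set $x=0$ in the divisibility to get a contradiction. If $n+1=q$, the identity collapses to $a+b=1$.
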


In order conclude the classification of nonclassical curves of type $\f:x^n+y^m=1$ and $\f:ax^ny^m+x^n+y^m=1$ with respect to $\Sigma_2$, we state the following lemma, which is an adaptation of \cite[Lemma 3.4]{AB1} to our case of conics. Since the proof is analogous to the proof of \cite[Lemma 3.4]{AB1}, it will be omitted.

\begin{lemma}\label{Le2.1}
Assume $p>5$. Let $\overline{\mathbb{F}}_q(x,y)$ be the function field of $\f$, and let $P=(u:v:1) \in \f$ be a general point. Suppose there exists a polynomial 
\[
G(X,Y) = \sum a_{ij}(x,y)^p X^i Y^j \in \overline{\mathbb{F}}_q[x,y][X,Y]
\]
of degree $d \geq 2$ such that $G(x,y)=0$. For 
\[
G_P(X,Y) := \sum a_{ij}(u,v)^p X^i Y^j \in \overline{\mathbb{F}}_q[X,Y],
\]
the following holds:

\begin{itemize}
    \item[(a)]\label{Le2.1a} If $G_P(X,Y)$ is irreducible of degree $d=2$, then $\f$ is nonclassical with respect to $\Sigma_2$, and the curve $\mathcal{G}_P: G_P(X,Y)=0$ is the osculating conic to $\f$ at $P$.
    
    \item[(b)]\label{Le2.1b} If the curve $\mathcal{G}_P: G_P(X,Y)=0$ satisfies $I(P, \mathcal{G}_P \cap \mathcal{C})<p$ for every conic $\mathcal{C}$, then $\f$ is classical with respect to $\Sigma_2$.
\end{itemize}
\end{lemma}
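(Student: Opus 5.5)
The plan is to follow the proof of \cite[Lemma 3.4]{AB1}, replacing cubics by conics. The key tool is the Hasse derivative $D_t^{(k)}$ with respect to a local parameter $t$ at the general point $P$. For $k<p$ and any function $a$, one has $D_t^{(k)}(a^p)=0$. Hence, in the Hasse derivatives of order $1\le k<p$ of the identity $G(x,y)=0$, the coefficients $a_{ij}(x,y)^p$ behave as constants.

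\textbf{Step 1 (contact of $\mathcal{G}_P$ with $\f$).} Expand $x=u+\sum_{k\ge1}c_kt^k$ and $y=v+\sum_{k\ge1}d_kt^k$ around $P$. Write $a_{ij}(x,y)^p=a_{ij}(u,v)^p+O(t^p)$; this holds because $a_{ij}(x,y)^p-a_{ij}(u,v)^p=(a_{ij}(x,y)-a_{ij}(u,v))^p$ vanishes to order at least $p$. Substituting into $G(x,y)=0$ gives
\[
G_P(x(t),y(t))=O(t^p),
\]
so $I(P,\f\cap\mathcal{G}_P)\ge p$. This requires $G_P(x,y)$ not to vanish identically on $\f$. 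In case (a) this holds because $G_P$ has degree $2<n$, which follows from $\min\{m,n\}>2$ and the irreducibility of $\f$.

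\textbf{Step 2 (part (a)).} If $G_P$ is an irreducible conic, Step 1 shows that some conic meets $\f$ at the general point $P$ with multiplicity at least $p>5=M$. Hence the last $(\Sigma_2,P)$-order satisfies $j_5(P)\ge p>5$. Since $P$ is general, it is $\Sigma_2$-ordinary, so $\varepsilon_5=j_5(P)\ne 5$ and $\f$ is nonclassical with respect to $\Sigma_2$.

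The osculating conic is unique and realizes $j_5$. To identify it with $\mathcal{G}_P$, argue as follows. Suppose it were a different conic $\mathcal{C}$. Taking a suitable linear combination of $\mathcal{C}$ and $\mathcal{G}_P$ (using the pencil they span) yields a conic with even larger contact, which contradicts maximality. This forces $\mathcal{C}=\mathcal{G}_P$ once one checks that the relevant orders are at least $p$. Alternatively, since the classical orders $0,1,2,3,4$ lie below $p$, one can use the determinantal description \eqref{EqHiper} together with the vanishing of $G_P$ up to order $p$. Irreducibility of $G_P$ ensures that $\mathcal{G}_P$ is a genuine conic rather than degenerate data.

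\textbf{Step 3 (part (b)).} Argue by contraposition. Suppose $\f$ is nonclassical with respect to $\Sigma_2$. Since $p>5$, the first nonclassical order is a power of $p$, so $\varepsilon_5\ge p$. Then at the general point $P$ the osculating conic $\mathcal{C}$ satisfies $I(P,\f\cap\mathcal{C})\ge p$. Compare $\mathcal{C}$ with $\mathcal{G}_P$: both $\mathcal{C}$ and $\mathcal{G}_P$ agree with the branch of $\f$ at $P$ up to order $p$, so locally
\[
I(P,\mathcal{G}_P\cap\mathcal{C})\ge \min\{I(P,\f\cap\mathcal{G}_P),\,I(P,\f\cap\mathcal{C})\}\ge p.
\]
This contradicts the hypothesis of (b).

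\textbf{Main obstacle.} The delicate point is this last ultrametric comparison of intersection multiplicities, together with the case where a component of $\mathcal{G}_P$ shares the branch of $\f$. I would handle it via parametrized branches: writing $\f$ locally as $y=\phi(x)$, both $G_P(x,\phi(x))$ and $C(x,\phi(x))$ are $O(x^p)$, and one deduces the intersection bound on the branch $\gamma$ of $\mathcal{G}_P$ through $P$, which agrees with $\phi$ modulo $x^p$.
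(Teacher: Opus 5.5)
Your Step 1, the nonclassicality half of (a), and the contrapositive structure of (b) are sound. The gap is in the identification $\mathcal{G}_P=\mathcal{H}_P$ in part (a).

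\textbf{The pencil argument does not work as stated.} A suitable member of the pencil has larger contact only when $\mathcal{C}$ and $\mathcal{G}_P$ have the \emph{same} contact with $\f$ at $P$. If $p\le I(P,\f\cap\mathcal{G}_P)<j_5(P)$, every member other than $\mathcal{C}$ has contact exactly $I(P,\f\cap\mathcal{G}_P)$, and nothing is contradicted.

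\textbf{The determinantal route has the same hole.} Using \eqref{EqHiper} requires $j_4(P)<p$. Your assertion that the orders $0,1,2,3,4$ occur amounts to $\f$ being classical w.r.t.\ $\Sigma_1$, which is not a hypothesis of (a).

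\textbf{This is where irreducibility of $G_P$ is really needed}, not just to make $\mathcal{G}_P$ ``a genuine conic''. If $\f$ is nonclassical w.r.t.\ $\Sigma_1$, the tangent line has contact $p^r$ at $P$. Then a reducible $G_P$, such as the tangent line times another line, has contact $\ge p$ with $\f$ at $P$. It is not the osculating conic, which is the double tangent line.

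\textbf{The fix uses the inequality you already invoke in Step 3.} Let $\ell$ be the tangent line to $\f$ at $P$, and suppose $I(P,\f\cap\ell)\ge p$. Then
$I(P,\mathcal{G}_P\cap\ell)\ge\min\{I(P,\f\cap\mathcal{G}_P),I(P,\f\cap\ell)\}\ge p>2$.
This is impossible for an irreducible conic and a line, by B\'ezout. Since $P$ is general and $p>2$, $I(P,\f\cap\ell)$ is either $2$ or a power of $p$, so it equals $2$. Products of two lines then realize the orders $0,1,2,3,4$. Hence $j_4(P)=4<p\le I(P,\f\cap\mathcal{G}_P)$, which forces $I(P,\f\cap\mathcal{G}_P)=j_5(P)$. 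Uniqueness of the osculating conic then gives $\mathcal{G}_P=\mathcal{H}_P$ directly; the pencil is only the proof of that uniqueness.

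\textbf{On the justification of the inequality in Step 3.} The inequality is true because $\f$ is nonsingular at $P$. However, do not derive it from ``the branch of $\mathcal{G}_P$ agreeing with $\phi$ modulo $x^p$''. $\mathcal{G}_P$ may be singular at $P$ and have no such branch: two branches each agreeing with $\phi$ only to order about $p/2$ already give $G_P(x,\phi(x))=O(x^p)$. Instead, let $\widehat{\mathcal{O}}_P$ be the completed local ring of the plane at $P$, and choose a regular system of parameters $(f,t)$ of it, with $f$ a local equation of $\f$. A curve $\mathcal{D}:D=0$ satisfies $I(P,\f\cap\mathcal{D})\ge p$ if and only if $D\in(f,t^p)$. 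So $(G_P,C)\subseteq(f,t^p)$, and therefore
$I(P,\mathcal{G}_P\cap\mathcal{C})\ge\dim\widehat{\mathcal{O}}_P/(f,t^p)=p$.
This argument also covers a singular $\mathcal{C}$ (e.g.\ the double tangent line in the $\Sigma_1$-nonclassical case) and the case where $\f$ is a component of $\mathcal{G}_P$.
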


\begin{obs}\label{Ob2.1}
Note that if $\mathcal{G}_P$ is irreducible of degree $d$, with $2 < d < \dfrac{p}{2}$, then by Bézout's Theorem, the conditions of Lemma~\ref{Le2.1b}(b) are satisfied; that is, $\f$ is classical with respect to $\Sigma_2$.
\end{obs}

In what follows in the paper, we use ``w.r.t.'' to mean “with respect to.''

\section{Curve $\f:ax^n+by^m=1$}

Consider the curve $\mathcal{F}$ defined over the finite field $\F$ as the projective closure of the affine curve $ax^n+by^m=1$, where $a,b \in \F$ and $m,n \in \mathbb{Z}_+$.
Henceforth, we assume without loss of generality that $n \geq m>2$. When $p>2$, the $\F$-Frobenius nonclassicality w.r.t. $\Sigma_1$ is established in Theorem \ref{Theoretas1}. In this section, we establish necessary and sufficient conditions for $\f$ to be $\F$-Frobenius nonclassical w.r.t. $\Sigma_2$. In order to obtain such a characterization, in view of Proposition \ref{Prop1.4} we start by studying the nonclassicality of $\f$ w.r.t. $\Sigma_2$. One should note that regarding $x$ and $y$ as elements in the function field of $\f$, both are separating elements over $\F$.

\subsection{Classicality of $\f$ with respect to $\Sigma_2$}

Since the classicality of the curve is a geometric property, in this subsection we may assume $a=b=1$.

\begin{proposition}\label{Prop2.2}
Let $p>5$, and let $\f:x^n+y^m=1$ be defined over $\F$. If $\f$ is nonclassical w.r.t. $\sum_2$, then the following hold:  
$$p \mid (2n-1)(n+1)(n-2)(n-1) \quad \text{and} \quad p \mid (2m-1)(m+1)(m-2)(m-1).$$
\end{proposition}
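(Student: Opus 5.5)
We will prove the contrapositive: if $p\nmid (2n-1)(n+1)(n-2)(n-1)$, we show $\f$ is classical w.r.t. $\Sigma_2$. The same argument with the roles of $x$ and $y$ exchanged handles the condition on $m$. The tool is Proposition \ref{Prop1.1}. We look for a point $P\in\f$ whose $(\Sigma_2,P)$-orders $j_0,\dots,j_5$ are explicitly computable and show that $\prod_{i>k}(j_i-j_k)/(i-k)$ is a $p$-adic unit exactly when $p\nmid(2n-1)(n+1)(n-2)(n-1)$. If $p\mid n$ the curve is degenerate (inseparable in $x$), so we may assume $p\nmid n$ for this step.

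The natural choice is a point with $y=0$, namely $P=(u:0:1)$ with $u^n=1$. At such a point $y$ is a local parameter, because $\partial(x^n+y^m-1)/\partial x=nu^{n-1}\neq 0$. From $x^n=1-y^m$ we get
\[
x=u(1-y^m)^{1/n}=u\Bigl(1-\tfrac{1}{n}y^m-\tfrac{n-1}{2n^2}y^{2m}-\cdots\Bigr).
\]
Hence the local expansions of $1,\;y,\;y^2,\;x-u,\;(x-u)y,\;(x-u)^2$ have $t=y$-adic valuations $0,1,2,m,m+1,2m$. These are distinct whenever $m>2$, so, after passing to a suitable triangular basis of $\Sigma_2$, they are the orders $j_0,\dots,j_5=0,1,2,m,m+1,2m$. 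The product in Proposition \ref{Prop1.1} then reduces, up to factors $i-k\le 5<p$, to the differences
\[
1,2,m,m+1,2m,\;1,m-1,m,2m-1,\;m-2,m-1,2m-2,\;1,m,\;m-1.
\]
So $p$ does not divide the product iff $p\nmid m(m+1)(m-1)(m-2)(2m-1)$, with $p\nmid 2$ since $p>5$. By the symmetric computation at a point $(0:v:1)$ with $v^m=1$, using $x$ as local parameter, classicality follows unless $p\mid n(n-1)(n+1)(n-2)(2n-1)$.

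This yields the following conclusion: if $\f$ is nonclassical then $p\mid m(2m-1)(m+1)(m-2)(m-1)$, and likewise for $n$. The main obstacle is removing the extra factor $m$ (respectively $n$) that the point computation produces. If $p\mid m$ then $y\mapsto$ the equation is inseparable in $y$, so $y^m=(y^{m/p})^p$ is a $p$-th power. We then apply Lemma \ref{Le2.1} with $G(X,Y)=X^n+(y^{m/p})^p-1$, treating the $p$-th power coefficient as an $a_{ij}^p$. The resulting $G_P(X,Y)=X^n+c$ is a union of lines $X=\text{const}$. Its intersection multiplicity with any conic at $P$ is at most $2<p$, so Lemma \ref{Le2.1}(b) gives classicality. (Alternatively, one could use points at infinity or general points to avoid the factor.) This removes the case $p\mid m$, and by symmetry $p\mid n$, completing the proof.
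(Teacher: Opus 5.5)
Your first two paragraphs are exactly the paper's proof. The paper also takes $P_\rho=(\rho,0)$ with $\rho^n=1$ and $P_\xi=(0,\xi)$ with $\xi^m=1$, and finds that the tangent lines have contact $m$ and $n$. It deduces the $(\Sigma_2,\cdot)$-orders $(0,1,2,m,m+1,2m)$ and $(0,1,2,n,n+1,2n)$ and applies Proposition~\ref{Prop1.1}. You are right that the product also contains the factors $m$ and $n$. The paper drops them because Section~3 works under the standing hypothesis $p\nmid mn$: it uses that both $x$ and $y$ are separating. Under that hypothesis these factors are $p$-adic units, and nothing more is needed.

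Your last paragraph is wrong, and it cannot be repaired, because the statement is false when $p\mid m$. Suppose $p\mid m$ and $p\nmid n$. Then $x^n=1-y^m=(1-y^{m/p})^p$ is a $p$-th power in $\overline{\mathbb{F}}_q(\f)$, and hence so is $x$: write $1=\lambda n+\mu p$, so $x=(x^n)^\lambda (x^\mu)^p$. It follows that $D_y^{(j)}x=0$ for $0<j<p$. The tangent line $X=u$ at a general point $(u,v)$ therefore has contact at least $p$, so $\f$ is nonclassical with respect to $\Sigma_1$. It is also nonclassical with respect to $\Sigma_2$, since the conic $(X-u)^2=0$ has contact at least $2p>5$ at a general point. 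Yet for $m=p$ one gets $(2m-1)(m+1)(m-2)(m-1)\equiv -2\not\equiv 0 \pmod p$; for instance $x^{p+1}+y^p=1$ is a counterexample.

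The concrete error is your claim that $\mathcal{G}_P: X^n-u^n=0$ meets every conic at $P$ with multiplicity at most $2$. Any conic containing the component $X=u$ through $P$ meets it with infinite multiplicity, for example the double line $(X-u)^2=0$, which is in fact the osculating conic here. So Lemma~\ref{Le2.1}(b) does not apply, and no choice of other points can remove the factor either. The same thing happens for $p\mid n$: then $y$ is a $p$-th power and the curve is again nonclassical. So calling that case ``degenerate'' is not a valid reduction.

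The correct fix is to state $p\nmid mn$ as a hypothesis, as the paper implicitly does, and delete your final paragraph.
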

\begin{proof}
Consider the affine points of $\f$ given by $P_{\xi}=(0, \xi)$, where $\xi$ is an $m$-th root of unity; $P_{\rho}=(\rho, 0)$, where $\rho$ is an $n$-th root of unity. The tangent line to $\f$ at $P_\xi$ is $\ell_{P_\xi}:y=\xi$, and one can check that $I(P_\xi,\f \cap \ell_{P_\xi})=n$. Thus the $(\Sigma_1,P_{\xi})$-order sequence is given by $(0,1,n)$. Since $n \geq m>2$, considering the conics given by the union of two of these lines, we obtain that the $(\Sigma_2,P_{\xi})$-order sequence is $(0,1,2,n,n+1,2n)$. Analogously, the $(\Sigma_2,P_{\rho})$-order sequence is given by $(0,1,2,m,m+1,2m)$. The result then follows from Proposition \ref{Prop1.1}.
\end{proof}

\begin{proposition}\label{Prop2.3}
Assume that $p>5$. The curve $\f$ is classical w.r.t. $\Sigma_2$ in the following cases:
\begin{itemize}
    \item[(a)] $p \mid (2n-1)$ and $p \mid (m+1)$;
    \item[(b)] $p \mid (2m-1)$ and $p \mid (n+1)$;
    \item[(c)] $p \mid (2n-1)$ and $p \mid (m-2)$;
    \item[(d)] $p \mid (2m-1)$ and $p \mid (n-2)$;
    \item[(e)] $p \mid (m+1)$ and $p \mid (n-2)$;
    \item[(f)] $p \mid (n+1)$ and $p \mid (m-2)$.
\end{itemize}
\end{proposition}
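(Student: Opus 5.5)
The plan is to apply Lemma~\ref{Le2.1}(b), mostly through Remark~\ref{Ob2.1}. In each case I will use the two divisibility conditions to rewrite the relation $x^n+y^m=1$ as a polynomial identity $G(x,y)=0$ whose coefficients are $p$-th powers of elements of $\overline{\mathbb{F}}_q[x,y]$. I then want $G_P$ to be an irreducible curve of degree $d$ with $2<d<p/2$. Since $\f$ is symmetric under $x\leftrightarrow y$, $n\leftrightarrow m$, cases (b), (d), (f) follow from (a), (c), (e) with the roles of $x,y$ exchanged. It is therefore enough to treat (a), (c), (e).

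\emph{Case (a).} Since $p\mid 2n-1$, the element $A:=x^{2n-1}=(x^{(2n-1)/p})^p$ is a $p$-th power and $x^{2n}=Ax$. Since $p\mid m+1$, the element $B:=y^{m+1}$ is a $p$-th power and $y^m=B/y$. Squaring $x^n=1-y^m$ and multiplying by $y^2$ gives $Axy^2=(y-B)^2$. So I take $G(X,Y)=AXY^2-(Y-B)^2$, and $G_P=A(u,v)XY^2-(Y-B(u,v))^2$ up to $p$-th power bookkeeping. This has degree $3$. It is linear in $X$, and for a general point $P$ we have $A(u,v)\neq 0$ and $B(u,v)\neq 0$, so $Y^2$ and $(Y-B(u,v))^2$ are coprime. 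Hence $G_P$ is irreducible. Because $p\ge 7$ we get $3<p/2$, and Remark~\ref{Ob2.1} gives classicality.

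\emph{Case (e).} Here $x^n=Ax^2$ with $A=x^{n-2}$ a $p$-th power, and $y^m=B/y$ as before. Multiplying by $y$ gives $G=AX^2Y-Y+B$, a cubic. It is linear in $Y$ with coprime coefficients $AX^2-1$ and $B$, so it is irreducible, and the same conclusion follows.

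\emph{Case (c).} Write $x^{2n}=Ax$ and $y^m=By^2$ with $B=y^{m-2}$. This yields $G=AX-(1-BY^2)^2$, of degree $4$. It is linear in $X$ with constant coefficient $A\neq 0$, so it is irreducible. For $p\ge 11$ we have $4<p/2$ and Remark~\ref{Ob2.1} applies.

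The main obstacle is $p=7$ in case (c) (and in (d)), where the Bézout bound $2\cdot 4=8\ge p$ is too weak. There I would apply Lemma~\ref{Le2.1}(b) directly. The curve $\mathcal{G}_P$ is the graph $X=g(Y):=(1-B_PY^2)^2/A_P$, so for a conic $\mathcal{C}:C(X,Y)=0$ the multiplicity $I(P,\mathcal{G}_P\cap\mathcal{C})$ is the order of vanishing of the polynomial $C(g(Y),Y)$ at $Y=v$. This polynomial has degree at most $8$. I would show it cannot vanish to order $\ge 7$ at a general $v$ unless it is identically zero, which would force $\mathcal{G}_P\subset\mathcal{C}$ and is impossible. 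The check is a Wronskian-type computation: impose the vanishing of derivatives of order $0,\dots,6$ at $v$ on the $6$ coefficients of $C$, and verify that the resulting linear system has only the trivial solution for generic $v$.

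If that computation becomes awkward, the fallback is a different rewriting. For $p=7$ we have $n\equiv 4$, so $x^n=Ax^4$ or $x^n=A'x^{-3}$, and $y^m=By^2$. This gives alternative irreducible models such as $AX^4+BY^2-1$, on which the same local order-of-contact argument may be simpler to carry out.
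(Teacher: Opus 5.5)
Your proposal is correct. For (a), (e), (c) with $p\ge 11$, and their mirror images, it follows the paper's proof. Your $G_P$ are exactly the affine forms of the paper's cubics $\alpha XY^2-\beta^2Z^3+2\beta YZ^2-Y^2Z$ and $\alpha X^2Y-YZ^2+\beta Z^3$, and of its quartic $\alpha XZ^3-\beta^2Y^4+2\beta Y^2Z^2-Z^4$, followed by Remark~\ref{Ob2.1}. Your irreducibility arguments (linear in one variable with coprime coefficients) are cleaner than the paper's singular-point analysis, and using plain $p$-th powers instead of a common $p^s$ is simpler. Both changes are cosmetic.

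The real difference is $p=7$ in (c)/(d).

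\textbf{The paper's route.} It normalizes $\mathcal{G}_P$ to $\mathcal{I}: XZ^3-Y^4+2Y^2Z^2-Z^4=0$ and finds a point $Q=(2:s:1)$, $s^2=-2$, where the tangent has contact exactly $3$. It then uses \cite[Lemma 3.3]{AB1} and B\'ezout to conclude that $\mathcal{I}$ is classical w.r.t. conics, so the contact is $<7$ at general points.

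\textbf{Your route.} Your graph-parametrization check is left unexecuted, but it closes at once, and at every $v$, not just a generic one. Let $g=(1-BY^2)^2/A$ with $B\neq 0$, and suppose a nonzero $h=C(g(Y),Y)$ had a zero of order $\ge 7$ at $v$.
\begin{itemize}
\item Since $\deg h\le 8$ and $(Y-v)^7=Y^7-v^7$ in characteristic $7$, we get $h=(Y^7-v^7)(cY+d)$.
\item None of $1,Y,Y^2,g,Yg,g^2$ has a $Y^7$ term, so $d=0$.
\item In degrees $6$ and $8$ only $g^2$ contributes, with coefficients $-4B^3/A^2$ and $B^4/A^2$. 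Since $h$ has no $Y^6$ term, its $g^2$-coefficient vanishes, hence $c=0$.
\item Then $h=0$, which is impossible: the six polynomials have distinct degrees $0,1,2,4,5,8$, hence are linearly independent.
\end{itemize}

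\textbf{Comparison.} Your version is self-contained and gives contact at most $6$ at every affine point of $\mathcal{G}_P$. So you never need to pass from the special point $Q$ of $\mathcal{I}$ to the point $P$ coming from $\mathcal{F}$. The cost is a small explicit computation. The paper's route is shorter to state but relies on an external lemma and an implicit genericity transfer.

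Your fallback model $AX^4+BY^2-1$ is unnecessary and would not help. It is still a quartic, so B\'ezout is just as weak, and it is no longer a polynomial graph.
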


\begin{proof}
    For case (a), consider integers $r,s,k,l$ such that $2n = p^r k + 1$ and $m = p^s l - 1$, with $p \nmid k$ and $p \nmid l$. Without loss of generality, assume $r \geq s$, i.e., $r = s + d$ for some integer $d\geq 0$, so that $2n = (p^d k)p^s + 1 = w p^s + 1$. We have in the function field $\overline{\mathbb{F}}_q(\f)$:
\begin{equation*}
(x^n + y^m - 1)(x^n - y^m + 1) = 0 \iff 
x^{2n} - y^{2m} + 2y^m - 1 = 0
\end{equation*}
\begin{equation}\label{E.11}
\iff (x^w)^{p^s} x - ((y^l)^2)^{p^s} y^{-2} + 2 (y^l)^{p^s} y^{-1} - 1 = 0.
\end{equation}

Let $P=(u:v:1) \in \f$ be a general point, and set $\alpha = (u^w)^{p^s}$ and $\beta = (v^l)^{p^s}$. 
Consider the cubic
 
\[
\mathcal{G}_P: \alpha XY^2 - \beta^2 Z^3 + 2 \beta YZ^2 - Y^2 Z = 0
\]  
One can check directly that $(1:0:0)$ is the only singular point of $\mathcal{G}_P$. By de-homogenizing the above polynomial with respect to the variable $X$ we obtain $\alpha Y^2-Z(\beta Z-Y)^2$. Since $P$ is a general point, we may assume $\beta \neq 0$, and then $Y \nmid Z(\beta Z-Y)^2$, whence   $\mathcal{G}_P$ is absolutely irreducible.
Thus Remark~\ref{Ob2.1} together with \eqref{E.11} imply that $\f$ is classical w.r.t. $\Sigma_2$. Case (b) is analogous to case (a).

Now we tackle case (c): consider integers $r,s,k,l$ such that $2n = p^r k + 1$ and $m = p^s l + 2$, with $p \nmid k$ and $p \nmid l$. In this case we may assume $r \geq s$, i.e., $r = s + d$ for some integer $d \geq 0$, so that $2n = (p^d k)p^s + 1 = w p^s + 1$. Thus
 \begin{equation*}
    (x^n+y^m-1)(x^n-y^m+1)=0 \iff 
    x^{2n}-y^{2m}+2y^m-1=0
    \end{equation*}
    \begin{equation}\label{E2.12}
    \iff(x^{w})^{p^s}x-((y^l)^2)^{p^s}y^{4}+2(y^l)^{p^s}y^2-1=0.
    \end{equation}
Hence, for a general point $P=(u:v:1) \in \f$, set $\alpha = (u^w)^{p^s}$ and $\beta = (v^l)^{p^s}$. The quartic \\$\mathcal{G}_P: \alpha XZ^3- \beta ^2 Y^4+2 \beta Y^2Z^2-Z^4=0$ is absolutely irreducible. Indeed, $(1:0:0)$ is the only singular point of $\mathcal{G}_P$, and it has multiplicity  $3$. Proceeding as we did with the cubic in the previous case, we obtain the result. Therefore the conclusion follows again from Remark ~\ref{Ob2.1} provided that $p>7$. If $p=7$, set $\mathcal{I}:XZ^3-Y^4+2Y^2Z^2-Z^4=0$. Then $\mathcal{I}$ is projectively equivalent to $\mathcal{G}_P$. One can show that if $Q=(2:s:1), \: s^2=-2$, then $I(Q,\mathcal{I}\cap\ell_Q)=3$ where $\ell_Q$ is the tangent line to $\mathcal{I}$ at $Q$. This together with \cite[Lemma 3.3]{AB1} and B\'ezout`s Theorem imply that $I(Q,\mathcal{I}\cap\mathcal{C})< 7$ for all conic $\mathcal{C}$; in particular,  $\mathcal{G}_P$ is classical w.r.t. $\Sigma_2$. The result then follows from Lemma \ref{Le2.1b}. Case (d) is analogous to (c). 

Finally, assume that (e) holds (case (f) is analogous to case (e)). Let  $r,s,k,l$  be integers such that $n=p^rk+2$ and $m=p^sl-1$, $p\nmid k$, $p\nmid l$. Assuming again that $r \geq s$, and thus $r= s+d$ for some $d\geq 0$,  we obtain $n=(p^dk)p^s+2=wp^s+2$. Hence
    \begin{equation}\label{E2.13}
    x^n+y^m-1=0 \iff (x^w)^{p^s}x^2+(y^l)^{p^s}y^{-1}-1=0.
    \end{equation}
This leads us to the cubic $\mathcal{G}_P: \alpha X^2Y-YZ^2+\beta Z^3=0$, where $\alpha=(u^w)^{p^s}$ and $\beta=(v^l)^{p^s}$, with $P=(u:v:1) \in \f$ being a general point. Such a cubic is absolutely irreducible, since it has only one singular point $(0:1:0)$, which is a node. The result follows as in the previous cases.

\end{proof}

\begin{proposition}\label{Prop2.4}
Assume $p>5$. The curve $\f$ is nonclassical w.r.t. $\Sigma_2$ in the following cases:
\begin{itemize}
    \item[(a)] $p \mid (m-2)$ and $p \mid (n-2)$;
    \item[(b)] $p \mid (m+1)$ and $p \mid (n+1)$;
    \item[(c)] $p \mid (2m-1)$ and $p \mid (2n-1)$;
    \item[(d)] $p \mid (m-1)$ and $p \mid (n-1)$;
    \item[(e)] $p \mid (2m-1)$ and $p \mid (n-1)$;
    \item[(f)] $p \mid (m-1)$ and $p \mid (2n-1)$;
    \item[(g)] $p \mid (m-2)$ and $p \mid (n-1)$;
    \item[(h)] $p \mid (m-1)$ and $p \mid (n-2)$;
    \item[(i)] $p \mid (m+1)$ and $p \mid (n-1)$;
    \item[(j)] $p \mid (m-1)$ and $p \mid (n+1)$.
\end{itemize}
\end{proposition}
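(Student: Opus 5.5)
Following Proposition \ref{Prop2.3}, I will use Lemma~\ref{Le2.1}(a) in each case: find a relation $G(x,y)=0$ with $p$-th power coefficients whose specialization $G_P$ at a general point $P=(u:v:1)$ is an irreducible conic. As before, I write the exponents $p$-adically: if $n=p^rk+c$ and $m=p^sl+c'$, then after possibly enlarging one exponent (as with $w=p^dk$ in Proposition \ref{Prop2.3}) I write $x^n=(x^{w})^{p^s}x^{c}$ with $c\in\{1,2,-1,1/2\}$. The last value is the formal meaning of $2n\equiv1$; the $r\ge s$ normalization may have to be done on whichever variable has the larger $p$-adic valuation, but the argument is symmetric. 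I set $\alpha=(u^{w})^{p^s}$ and $\beta=(v^{l})^{p^s}$, which are $p$-th powers, nonzero for general $P$.

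Cases (a), (b), (d) and (g)--(j) use the equation $x^n+y^m=1$ directly. In case (a) it becomes $\alpha X^2+\beta Y^2=1$. In case (b) it becomes $\alpha X^{-1}+\beta Y^{-1}=1$, i.e.\ $\alpha Y+\beta X=XY$. In case (d) it becomes a line, $\alpha X+\beta Y=1$. A line gives nonclassicality with respect to $\Sigma_1$, hence with respect to $\Sigma_2$, since the squares of the coordinates then satisfy a degenerate quadratic relation and the Wronskian for $\Sigma_2$ vanishes. The mixed cases give $\alpha X^2+\beta Y=1$ in (g)/(h), a parabola, and $\alpha X+\beta Y^{-1}=1$ in (i)/(j), i.e.\ $\alpha XY+\beta=Y$, a hyperbola. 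Each of these conics is irreducible: it has a nonzero determinant for $\alpha\beta\neq0$. For case (d) I would instead note directly that the Wronskian rows $D^{(i)}$ of $1,x,y,x^2,xy,y^2$ become dependent, or simply cite Proposition~\ref{Prop1.4} in reverse via the general fact that nonclassicality for lines implies nonclassicality for conics.

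Cases (c), (e) and (f), which involve $2n-1$ or $2m-1$, need a relation in which $x^{2n}$ appears. Following case (a) of Proposition \ref{Prop2.3}, I multiply by conjugates or square. In (c), write $2n=wp^s+1$ and $2m=zp^s+1$. Then $x^n=1-y^m$ gives $x^{2n}=(1-y^m)^2$, i.e.\ $\alpha x=1-2y^m+y^{2m}$. This still contains $y^m$, which is not of the desired form, so I square again: $(x^{2n}-1-y^{2m})^2=4y^{2m}$. Specializing gives $(\alpha X-1-\beta Y)^2=4\beta Y$, a conic, namely a parabola. Its irreducibility I check by the determinant, or by noting that it is not a union of lines since $Y$ appears linearly on the right. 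Cases (e) and (f) are similar, using $x^{2n}=\alpha x$ together with $y^m=\beta y$ (or $\beta y^2$ in the $m-2$ type, but here $m-1$): $\alpha X=(1-\beta Y)^2$, an irreducible parabola.

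The main obstacle is bookkeeping rather than ideas. First, the $p$-adic normalization must be arranged so that the same $p^s$ is used for both variables. Second, I must verify that squaring does not introduce a spurious factor, since $G$ must vanish at $(x,y)$, which it does. Third, $G_P$ must be irreducible of degree exactly $2$ for general $P$, which amounts to a nonvanishing $3\times3$ determinant in $\alpha,\beta$. With these checks in place, Lemma~\ref{Le2.1}(a) gives nonclassicality in every case.
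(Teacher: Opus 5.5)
Your proposal is correct and is essentially the paper's proof. You apply Lemma~\ref{Le2.1}(a) case by case with the same conics: your twice-squared relation in (c) and your identity $x^{2n}=(1-y^m)^2$ in (e)/(f) expand, after homogenizing, to exactly the paper's $\alpha^2X^2+\beta^2Y^2+Z^2-2\alpha\beta XY-2\alpha XZ-2\beta YZ$ and, up to swapping variables, $\beta YZ-\alpha^2X^2+2\alpha XZ-Z^2$. In case (d) you reduce to nonclassicality w.r.t.\ lines plus degenerate conics, deriving $\alpha x+\beta y=1$ directly instead of citing Garcia as the paper does. The one slip is the aside about citing Proposition~\ref{Prop1.4} ``in reverse'', which is irrelevant. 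What you actually need, and what the paper uses, is that a tangent-line order $p^r$ forces the $\Sigma_2$-orders $(0,1,2,p^r,p^r+1,2p^r)$ at a general point.
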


\begin{proof}
The proof of each item will follow from Lemma~\ref{Le2.1a}(a). Following the notation of such Lemma, in each case we provide a polynomial $G(X,Y)\in \overline{\mathbb{F}}_q[x,y][X,Y]$ and for a general point $P=(u:v:1)$, the corresponding absolutely irreducible conic $\mathcal{G}_P$. In what follows, for integers $r,s,w,l$, we set $\alpha=(u^w)^{p^s}$ and $\beta=(v^l)^{p^s}$.
\begin{itemize}
\item[(xa)] Consider integers $r,s,k,l$ such that $n=p^rk+2$ and $m=p^sl+2$, $p\nmid k$, $p\nmid l$. There is no loss of generality in assuming that $r \geq s$. Thus  $r= s+d$ for some $d\geq 0$ and $n=(p^dk)p^s+2=wp^s+2$. Hence
    \begin{equation}\label{E2.14}
    x^n+y^m-1=0 \iff (x^w)^{p^s}x^2+(y^l)^{p^s}y^2-1=0.
    \end{equation}
    By \eqref{E2.14} we have the associated irreducible conic  $\mathcal{G}_P: \alpha X^2+\beta Y^{2}-Z^2=0.$ Therefore, from Lemma \ref{Le2.1a} (a) we conclude that $\f$ is nonclassical w.r.t. $\Sigma_2$.
\item[(b)] Here we take $r,s,k,l$ such that $n=p^rk-1$ and $m=p^sl-1$, $p\nmid k$, $p\nmid l$ and WLOG $r=s+d$, with $d \geq 0$. Thus  $n=(p^dk)p^s-1=wp^s-1$, and we have
    \begin{equation}\label{E2.15}
    (x^n+y^m-1)(xy)=0 \iff (x^w)^{p^s}y+(y^l)^{p^s}x-xy=0
    \end{equation}
     and the associated irreducible conic $\mathcal{G}_P:\alpha YZ +\beta XZ -XY=0.$
\item[(c)] Consider $r,s,k,l$ such that $2n=p^rk+1$ and $2m=p^sl+1$, $p\nmid k$, $p\nmid l$ and WLOG $r=s+d$ with $d \geq 0$. Thus $2n=(p^dk)p^s+1=wp^s+1$ and 
\begin{equation}\label{E2.16}
    \begin{split}
    &(x^n+y^m)^2=1 \Longrightarrow\\
    &x^{2n}+2x^{n}y^{m}+y^{2m}=1 \Longrightarrow\\
    &(x^{2n}+y^{2m}-1)^2=(2x^{n}y^m)^2\Longrightarrow \\
    &x^{4n}+y^{4m}+1-2x^{2n}y^{2m}-2x^{2n}-2y^{2m}=0\Longrightarrow\\
    &((x^w)^{p^{s}})^2x^2+((y^l)^{p^s})^2y^2+1-2(x^w)^{p^s}(y^l)^{p^s}xy-2(x^w)^{p^s}x-2(y^l)^{p^s}y=0.
     \end{split}
    \end{equation}
The associated irreducible conic in this case is $\mathcal{G}_P: \alpha^2 X^2+\beta^2 Y^{2}+Z^2-2\alpha \beta XY - 2\alpha XZ - 2 \beta YZ=0.$
\item[(d)] From \cite[Theorem 1]{Gar}, $\f$ is nonclassical w.r.t. $\Sigma_1$ if and only if $p \mid m-1$ and $p \mid n-1$. Thus since $p>5$, considering degenerate conics we conclude that $\f$ is nonclassical w.r.t. $\Sigma_2$.
\item[(e)] and (f) Here we consider  $r,s,k,l$ such that $n=p^rk+1$ and $2m=p^sl+1$, $p\nmid k$, $p\nmid l$. Assume WLOG that $r=s+d$ with $d \geq 0$. Thus $n=(p^dk)p^s+1=wp^s+1$ and we have
\begin{equation*}
    (x^n+y^m-1)(-x^n+y^m+1)=0
   \iff y^{2m}-x^{2n}+2x^n-1-0
   \end{equation*}
   \begin{equation}\label{E2.17}
   \iff (y^l)^{p^s}y-(x^{2w})^{p^s}x^2+2(x^k)^{p^s}x-1=0.
    \end{equation}
The associated irreducible conic in this case is $\mathcal{G}_P: \beta YZ -\alpha^2X^2+2\alpha XZ-Z^2=0.$ Since we never use the fact that $n \geq m$, the case (f) is analogous.

\item[(g)] and (h) With the notation as in the previous items, here we have
\begin{equation}\label{E0}
    (x^n+y^m-1)=0  
   \iff (x^{w})^
 {p^s}x^1+(y^l)^{p^s}y^2-1=0
    \end{equation}
and the associated irreducible conic $\mathcal{G}_P:\alpha XZ +\beta Y^2-Z^2=0.$ Case (h) is analogous.

\item[(i)] and (j)  With the notation as in the previous items, here we have
\begin{equation}\label{E1}
    (x^n+y^m-1) = 0 
   \iff (x^{w})^
 {p^s}x+(y^l)^{p^s}y^{-1}-1=0 \iff 
 (x^{w})^{p^s}xy+(y^l)^{p^s}-y=0
    \end{equation}
and the associated irreducible conic $\mathcal{G}_P:\alpha XY +\beta Z^2 -YZ=0.$ Again, item (j) is analogous.
\end{itemize}
\end{proof}

As an immediate consequence of Propositions~\ref{Prop2.2}, \ref{Prop2.3}, and \ref{Prop2.4}, we have the following classification.

\begin{theorem}\label{Teo2.3}
Assume $p>5$. The curve $\f$ is nonclassical w.r.t. $\Sigma_2$ if and only if one of the following conditions holds:
\begin{itemize}
    \item[(a)] $p \mid (m-2)$ and $p \mid (n-2)$;
    \item[(b)] $p \mid (m+1)$ and $p \mid (n+1)$;
    \item[(c)] $p \mid (2m-1)$ and $p \mid (2n-1)$;
    \item[(d)] $p \mid (m-1)$ and $p \mid (n-1)(n-2)(2n-1)(n+1)$;
    \item[(e)] $p \mid (n-1)$ and $p \mid (m+1)(m-2)(2m-1)$.
\end{itemize}
\end{theorem}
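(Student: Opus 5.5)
The theorem follows by combining Propositions~\ref{Prop2.2}, \ref{Prop2.3} and \ref{Prop2.4} through a case analysis on which of the four factors of $(n-1)(n-2)(n+1)(2n-1)$, and which of $(m-1)(m-2)(m+1)(2m-1)$, are divisible by $p$.

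First I will note that, since $p>5$, the four linear forms $k-1$, $k-2$, $k+1$, $2k-1$ are pairwise incongruent modulo $p$. Their pairwise differences are, up to sign or a unit factor, $1,2,3$; for example $2(k-2)-(2k-1)=-3$ and $2(k+1)-(2k-1)=3$. So for each of $n$ and $m$, at most one of the four factors is divisible by $p$.

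\emph{Necessity.} Suppose $\f$ is nonclassical w.r.t. $\Sigma_2$. By Proposition~\ref{Prop2.2}, $p$ divides exactly one factor $A\in\{n-1,n-2,n+1,2n-1\}$ and exactly one factor $B\in\{m-1,m-2,m+1,2m-1\}$. This gives $16$ possible pairs $(A,B)$.
\begin{itemize}
\item The three matched pairs $(n-2,m-2)$, $(n+1,m+1)$, $(2n-1,2m-1)$ are items (a), (b), (c) of the theorem.
\item The seven pairs with $B=m-1$ are covered by item (d).
\item The pairs with $A=n-1$ and $B\neq m-1$ are covered by item (e).
\item The remaining six mixed pairs among $\{n-2,n+1,2n-1\}\times\{m-2,m+1,2m-1\}$ are exactly cases (a)--(f) of Proposition~\ref{Prop2.3}, in which $\f$ is classical, a contradiction.
\end{itemize}
So one of (a)--(e) holds.

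\emph{Sufficiency.} Each condition (a)--(e) is an instance of a case of Proposition~\ref{Prop2.4}:
\begin{itemize}
\item items (a), (b), (c) are its cases (a), (b), (c);
\item item (d) splits into its cases (d), (h), (j), (f) according to which factor of $n$ is divisible by $p$;
\item item (e) splits into its cases (i), (g), (e) in the same way.
\end{itemize}
In every case $\f$ is nonclassical.

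Nothing here is deep; the argument is bookkeeping. The one point that needs care is the bookkeeping between Proposition~\ref{Prop2.2} and the classical list of Proposition~\ref{Prop2.3}. Proposition~\ref{Prop2.3} excludes mixed pairs only, and the standing assumption $n\ge m$ plays no role, since every case is stated symmetrically. The distinctness claim is what makes the six excluded pairs genuinely exhaustive of the bad mixed cases, so I will verify the differences $1,2,3$ explicitly there.
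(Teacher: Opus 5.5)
Your proposal is correct and is exactly the paper's argument: the paper states this classification as an immediate consequence of Propositions~\ref{Prop2.2}, \ref{Prop2.3} and \ref{Prop2.4}, and you have simply written out the case bookkeeping. Two small remarks: there are four pairs with $B=m-1$, not seven (seven is the number with $A=n-1$ or $B=m-1$); and the distinctness of the factors modulo $p$ is true but not needed, since any single pair $(A,B)$ of factors divisible by $p$ already falls either in the theorem's list or in the classical list of Proposition~\ref{Prop2.3}.
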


\subsection{$\F$-Frobenius classicality of $\f$ w.r.t. $\Sigma_2$}

\quad The aim of this section is to give a characterization of the curves $\f:ax^n+by^m=1$ that are $\F$-Frobenius nonclassical w.r.t. $\Sigma_2$ for $p>5$. Hence, in what follows we always assume $p>5$, and $\mathcal{H}_P$ will denote the osculating conic to $\f$ at $P\in \f$.  

In view of Proposition \ref{Prop1.4}, we may restrict our attention to the cases in which $\f$ is nonclassical for $\Sigma_2$.

\begin{proposition} \label{Prop2.5}
Assume that $p>5$, $\f$ is classical w.r.t. $\Sigma_1$ and nonclassical w.r.t. $\Sigma_2$. For $P=(u:v:1) \in \f$, $uv \neq 0$, the osculating conic $\mathcal{H}_P$ to $\f$ at $P$ is the irreducible projective curve defined by $H_P(X,Y,Z)=0$ where

\[
H_P(X,Y,Z) =
\begin{cases}
au^{n-2}X^2 + bv^{m-2}Y^2 - Z^2 ,  \text{ if } p \mid (m-2) \text{ and } p \mid (n-2); \\[2mm] 
au^{n+1} YZ + bv^{m+1} XZ - XY ,  \text{ if } p \mid (m+1) \text{ and } p \mid (n+1);\\[2mm]
a^4 u^{4n-2} X^2 + b^4 v^{4m-2} Y^2 + Z^2 - 2a^2b^2 u^{2n-1} v^{2m-1} XY - 2a^2 u^{2n-1} XZ- \\[2mm]\qquad \qquad \qquad  2b^2 v^{2m-1} YZ ,  \text{ if } p \mid (2m-1) \text{ and } p \mid (2n-1);\\[2mm]
b^2 v^{2m-1} YZ - a^2 u^{2n-2} X^2 + 2 a u^{n-1} XZ - Z^2 ,  \text{ if } p \mid (2m-1) \text{ and } p \mid (n-1);\\[2mm]
a u^{n-1} XY + b v^{m+1} Z^2 - YZ ,  \text{ if } p \mid (m+1) \text{ and } p \mid (n-1);\\[2mm]
a u^{n-1} XZ + b v^{m-2} Y^2 - Z^2 ,  \text{ if } p \mid (m-2) \text{ and } p \mid (n-1);\\[2mm]
a u^{n-2} X^2 + b v^{m-1} YZ - Z^2, \text{ if } p \mid (m-1) \text{ and } p \mid (n-2);\\[2mm]
a^2 u^{2n-1} XZ - b^2 v^{2m-2} Y^2 + 2 b v^{m-1} YZ - Z^2 ,  \text{if } p \mid (m-1) \text{ and } p \mid (2n-1);\\[2mm]
b v^{m-1} XY + a u^{n+1} Z^2 - XZ, \text{ if } p \mid (m-1) \text{ and } p \mid (n+1).
\end{cases}
\]
\end{proposition}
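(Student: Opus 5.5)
The plan is to take each case of Theorem \ref{Teo2.3} that is compatible with $\f$ being classical w.r.t. $\Sigma_1$, and redo the construction of Proposition \ref{Prop2.4} with the coefficients $a,b$ restored. In every case that construction gives a polynomial identity $G(x,y)=0$ in $\overline{\mathbb{F}}_q(\f)$ whose coefficients are $p^s$-th powers, with $p^s$ a power of $p$ dividing the relevant quantity ($n-2$, $n+1$, $2n-1$, $n-1$, and likewise for $m$). Lemma \ref{Le2.1}(a) then says that, at a general point, the specialized conic $\mathcal{G}_P$ is the osculating conic. The case $p\mid(m-1)$, $p\mid(n-1)$ does not occur, because by Theorem \ref{Theoretas1} and \cite[Theorem 1]{Gar} the curve would then be nonclassical w.r.t. $\Sigma_1$. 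This is why the list has nine cases, namely (a),(b),(c),(e)--(j) of Proposition \ref{Prop2.4}.

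The first step is to rewrite each $\mathcal{G}_P$ in terms of $u,v$. Take case (a) as the model. From $ax^n+by^m=1$ with $n=wp^s+2$ and $m=lp^s+2$, the coefficients are $\alpha=a(u^w)^{p^s}$ and $\beta=b(v^l)^{p^s}$. These equal $au^{n-2}$ and $bv^{m-2}$ exactly, since $u^{n-2}=u^{wp^s}$. One caveat: in Proposition \ref{Prop2.4} we assumed $r\ge s$, so only one exponent was written in the form $wp^s$ with the minimal $p$-power. The cleaner choice here is to let $p^s$ be the largest power of $p$ dividing both $n-2$ and $m-2$. Then $\alpha=au^{n-2}$ and $\beta=bv^{m-2}$ hold as identities, and $\mathcal{G}_P$ is $au^{n-2}X^2+bv^{m-2}Y^2-Z^2$. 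The other cases are handled the same way.
\begin{itemize}
\item[(b)] Multiplying by $xy$ gives $au^{n+1}YZ+bv^{m+1}XZ-XY$.
\item[(c)] Squaring twice gives the listed conic with $a^2u^{2n-1}$ and $b^2v^{2m-1}$ in place of $\alpha,\beta$; for example the $X^2$ coefficient is $(a^2u^{2n-1})^2=a^4u^{4n-2}$.
\item[(e)--(j)] These are the mixed cases. The only care needed is to keep track of which factor carries the coefficient $a$ or $b$ after multiplying the identity by $(-ax^n+by^m+1)$ or by $y$.
\end{itemize}
This identifies $\mathcal{G}_P$ with $H_P$ at a general point.

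Second, we pass from general points to all $P$ with $uv\neq0$. Both sides are given by rational expressions in $u,v$: the osculating conic through the determinant \eqref{EqHiper}, and $H_P$ by its explicit coefficients. They agree on a dense set, so they agree wherever both are defined and the $(\Sigma_2,P)$-orders equal $(0,1,2,3,4,\varepsilon_5)$. A more direct route, which I prefer, is to verify straight from the identity $G(x,y)=0$ that $H_P$ meets $\f$ at $P$ with multiplicity at least $p^s\ge p>5$. The reason is that $H_P(x,y,1)$ equals $G(x,y)$ with its $p^s$-power coefficients replaced by their values at $P$. Therefore $H_P(x,y,1)=\sum (c_{ij}(P)-c_{ij}(x,y))^{p^s}\cdot(\text{monomials})$, and each $c_{ij}(x,y)-c_{ij}(P)$ vanishes at $P$ whenever the $c_{ij}$ are regular at $P$, which is exactly where $uv\neq0$ is used. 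Since $p>5$ exceeds the intersection multiplicity any other conic can have there, $H_P$ is the unique osculating conic.

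Third, we check that $H_P$ is irreducible when $uv\neq0$. Each conic is nondegenerate exactly when its $3\times3$ symmetric matrix is nonsingular, and the determinant is a monomial in $a,b,u,v$ times a nonzero constant. For example, in case (a) the determinant is $-au^{n-2}bv^{m-2}$, and in case (b) it is a nonzero multiple of $au^{n+1}bv^{m+1}$. The main obstacle is case (c), together with the analogous squared cases (e) and (f). There I expect the determinant to reduce, after using $p\nmid 2$, to a constant times $a^4b^4u^{4n-2}v^{4m-2}$, but it has to be computed carefully. The sign and power bookkeeping in (e)--(j) must also match the stated formulas.
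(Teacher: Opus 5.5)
Your proposal is correct and is essentially the paper's argument: the paper's proof just combines the identities of Proposition~\ref{Prop2.4}, with $a,b$ restored, with Lemma~\ref{Le2.1}(a), which is exactly your first step. Your direct valuation argument for every $P$ with $uv\neq 0$, together with the determinant check, supplies what that one-line proof leaves implicit, since Lemma~\ref{Le2.1}(a) only covers general points. Three points to fix when you write it up. First, the claim that every other conic meets $\f$ at $P$ with multiplicity less than $p$ needs B\'ezout applied to the irreducible $H_P$: for $\mathcal{C}\neq H_P$ one gets $I(P,\f\cap\mathcal{C})=I(P,H_P\cap\mathcal{C})\leq 4$. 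So your third step has to come before your second. Second, $uv\neq 0$ is used only for this nondegeneracy, not for regularity: the coefficients $c_{ij}$ are polynomials, so they are regular at every affine point. Third, the exclusion of the case $p\mid(m-1)$, $p\mid(n-1)$ comes from \cite[Theorem 1]{Gar} alone; Theorem~\ref{Theoretas1} concerns Frobenius nonclassicality and is not needed there.
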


\begin{proof}
This follows from Proposition~\ref{Prop2.4} and Lemma \ref{Le2.1a}.
\end{proof}

Proposition \ref{Prop2.5} assures that if $\f$ is nonclassical w.r.t. $\Sigma_2$, then the osculating conic at a general point of $\f$ is irreducible. With this information on hands, we are able to prove the following result. The proof will be omitted since it is analogous to the one of \cite[Lemma 4.1]{AB1}.

\begin{proposition} \label{Prop2.6}
Assume $p>5$. If $\f$ is classical w.r.t. $\Sigma_1$ and nonclassical w.r.t. $\Sigma_2$, then the following statements hold:
\begin{itemize}
    \item[(a)] The order sequence of $\f$ w.r.t. $\Sigma_2$ is $(0,1,2,3,4,p^r)$, for some $r>0$.
    \item[(b)] The curve $\f$ is $\F$-Frobenius nonclassical w.r.t. $\Sigma_2$ if and only if $\Phi_q(P) \in \mathcal{H}_P$ 
     for infinitely many points $P \in \f$,  where $\Phi_q:\f \longrightarrow \f$ denotes the $\F$-Frobenius map.
\end{itemize}
\end{proposition}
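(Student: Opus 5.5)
For (a), I use the Stöhr--Voloch structure of the order sequence w.r.t.\ $\Sigma_2$ together with Proposition~\ref{Prop2.5}. Since $\f$ is classical w.r.t.\ $\Sigma_1$, the $\Sigma_1$-orders are $(0,1,2)$.

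Taking products of two lines, at a general point $P$ we get intersection multiplicities $0,1,2,3,4$ with conics: the union of the tangent line with a generic line gives $1$ and $2$, the double tangent gives $4$, and a generic line times the tangent gives $3$ via a nondegenerate combination. More cleanly, the $\Sigma_2$-orders contain $2\cdot\{0,1,2\}$ sums, and the standard argument of \cite[Lemma 4.1]{AB1} shows that $0,1,2,3,4$ are orders. Since $\f$ is nonclassical, $\varepsilon_5>5$.

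To show $\varepsilon_5$ is a power of $p$, I invoke the $p$-adic criterion \cite[Corollary 1.9]{SV}: if $\varepsilon$ is an order and $\binom{\varepsilon}{\mu}\not\equiv 0 \pmod p$, then $\mu$ is an order. Writing $\varepsilon_5$ in base $p$, if it were not a power of $p$ we would obtain some $\mu$ with $5\le\mu<\varepsilon_5$ (or $\mu=5$) among the orders, which is a contradiction. One uses $p>5$ here so that every digit of $0,\dots,4$ lies below $p$. Hence $\varepsilon_5=p^r$ with $r>0$.

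For (b), I use the Frobenius orders. By \cite[Proposition 2.1]{SV}, $(\nu_0,\dots,\nu_4)$ is obtained by deleting one $\varepsilon_I$ from $(0,1,2,3,4,p^r)$. So $\f$ is Frobenius nonclassical exactly when $\nu_4=p^r$, that is, when the determinant \eqref{Eq1.4} with rows $\varphi^q,D^{(0)}\varphi,\dots,D^{(4)}\varphi$ vanishes identically. By \eqref{EqHiper}, at a general point $P$ the orders are $j_i=\varepsilon_i$, so $\mathcal{H}_P$ is the hyperplane spanned by $D^{(i)}\varphi(P)$ for $i\le4$. The vanishing of that determinant at $P$ therefore means precisely that $\varphi(\Phi_q(P))=\varphi(P)^q$ lies on $\mathcal{H}_P$. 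A nonzero rational function has only finitely many zeros. Hence identical vanishing holds if and only if $\Phi_q(P)\in\mathcal{H}_P$ for infinitely many $P$.

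The main obstacle is part (a): confirming that $3$ and $4$ are orders, and ruling out $\varepsilon_5$ being a non-power of $p$ that is at least $p$, using the Hasse-derivative binomial criterion. Irreducibility of $\mathcal{H}_P$ from Proposition~\ref{Prop2.5} is used to ensure that the osculating conic is not the double tangent, so the top order is attained by a genuine conic.
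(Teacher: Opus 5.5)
Your proof is correct and is essentially the standard St\"ohr--Voloch argument that the paper defers to via \cite[Lemma 4.1]{AB1}: sums of two $\Sigma_1$-orders give $0,\dots,4$, the $p$-adic criterion forces the first nonclassical order $\varepsilon_5$ to be a power of $p$, and (b) is the identical vanishing of the determinant with rows $\varphi^q,D^{(0)}\varphi,\dots,D^{(4)}\varphi$, read at general points as $\Phi_q(P)\in\mathcal{H}_P$. Two small slips: the order $3$ comes from the tangent line times a non-tangent line through $P$ (not a generic line), and the irreducibility of $\mathcal{H}_P$ is not actually an input, since $j_5\geq 5$ already rules out any pair of lines at a $\Sigma_1$-ordinary point.
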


\begin{proposition}\label{Prop2.1'}
Let $p>5$ and assume that $p \mid (n-2)$ and $p \mid (m-2)$. Then, the curve $\f$ defined over $\F$ is $\F$-Frobenius nonclassical w.r.t. $\Sigma_2$ if and only if 
\[
m = n = \frac{2(q-1)}{p^r-1}
\] 
for some $r < h$ such that $r \mid h$ and $a,b \in \mathbb{F}_{p^r}$.
\end{proposition}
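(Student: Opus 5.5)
Since $p\mid(n-2)$ and $p\mid(m-2)$, we have $p\nmid(n-1)(m-1)$, so $\f$ is classical w.r.t. $\Sigma_1$ by Theorem \ref{Theoretas1} (with $p>5$). By Theorem \ref{Teo2.3}(a) it is nonclassical w.r.t. $\Sigma_2$. Hence Proposition \ref{Prop2.6}(b) applies: $\f$ is $\F$-Frobenius nonclassical iff $\Phi_q(P)=(u^q:v^q:1)\in\mathcal{H}_P$ for infinitely many $P=(u:v:1)$. By Proposition \ref{Prop2.5}, the osculating conic is $au^{n-2}X^2+bv^{m-2}Y^2=Z^2$. The condition therefore becomes the identity in the function field
\begin{equation*}
a x^{n-2}x^{2q}+b y^{m-2}y^{2q}=1,\qquad\text{i.e.}\qquad a x^{n+2q-2}+b y^{m+2q-2}=1,
\end{equation*}
which holds on $\f$, where $ax^n+by^m=1$ also holds.

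For sufficiency, assume $m=n=2(q-1)/(p^r-1)$ with $a,b\in\mathbb{F}_{p^r}$. Set $N=(q-1)/(p^r-1)$, so $n=2N$. For $P$ on $\f$, the quantity $ax^n=a(x^2)^N$ should behave like a norm. I would verify the identity directly. From $ax^n+by^n=1$, raise to the power $p^r$, and more usefully write $q-1=N(p^r-1)$. Then
\begin{equation*}
x^{n+2q-2}=x^n\cdot x^{2(q-1)}=x^n (x^{n})^{p^r-1}=(x^n)^{p^r}.
\end{equation*}
Hence $ax^{n+2q-2}+by^{n+2q-2}=a(x^n)^{p^r}+b(y^n)^{p^r}=(ax^n+by^n)^{p^r}=1$, using $a^{p^r}=a$ and $b^{p^r}=b$. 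Thus every point satisfies the condition.

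For necessity, I would use the two relations $ax^n+by^m=1$ and $ax^{n+2q-2}+by^{m+2q-2}=1$ in $\overline{\mathbb{F}}_q(x,y)$. Eliminating $by^m$ gives
\begin{equation*}
a x^n(x^{2q-2}-y^{2q-2})=1-y^{2q-2}.
\end{equation*}
Comparing valuations at places (for example at the points $P_\xi=(0,\xi)$, or equivalently comparing degrees using that $x,y$ are algebraically related only through $ax^n+by^m=1$) should force strong constraints. The cleaner route, following \cite{GV1} and \cite[Appendix B]{AB1} for Fermat curves, is this. The map $(x,y)\mapsto(x^n,y^m)$ sends $\f$ onto the line $aU+bV=1$, and the condition reads $a\,U\cdot x^{2q-2}+b\,V\cdot y^{2q-2}=1$. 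Viewing this over the field $\overline{\mathbb{F}}_q(x,y)$, which is a Kummer extension of $\overline{\mathbb{F}}_q(U)$ of degree $nm$, I would argue that $x^{2q-2}$ and $y^{2q-2}$ must lie in $\overline{\mathbb{F}}_q(U)$. This forces $n\mid 2(q-1)$ and $m\mid 2(q-1)$. Writing $x^{2q-2}=U^{2(q-1)/n}$ and similarly for $y$, the relation becomes the polynomial identity
\begin{equation*}
a^{-e+1}\,\cdots:\quad aU^{e+1}+bV^{f+1}=1 \quad\text{on } aU+bV=1,
\end{equation*}
with $e=2(q-1)/n$ and $f=2(q-1)/m$. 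Here I would substitute $V=(1-aU)/b$ and use a Lucas/binomial argument. This should force $e+1=f+1=p^r$ with $a^{p^r}=a$ and $b^{p^r}=b$, so $m=n=2(q-1)/(p^r-1)$. From $p^r-1\mid 2(q-1)$ together with the condition $p\mid n-2$, I would then derive $r\mid h$ and $r<h$; the case $r=h$ would give $n=2$, contradicting $n>2$.

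I expect the main obstacle to be the step showing that $x^{2q-2}$ lies in $\overline{\mathbb{F}}_q(U)$. I would first try a valuation and Galois argument: the Kummer group $\mu_n\times\mu_m$ acts on $x,y$ by roots of unity and fixes $U$ and $V$. Applying the group to the relation yields $a\zeta^{2q-2}Ux^{2q-2}+\dots=1$ for all $\zeta\in\mu_n$. Subtracting the original relation then forces $\zeta^{2q-2}=1$ unless degenerate, which gives $n\mid 2q-2$.
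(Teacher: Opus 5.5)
Your proof is correct. For the necessity direction it takes a genuinely different route from the paper's.

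Both arguments start the same way: Propositions \ref{Prop2.5} and \ref{Prop2.6}(b) reduce the question to the identity $ax^{n+2q-2}+by^{m+2q-2}=1$ on $\f$.

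The paper then writes $n-2+2q=p^r\alpha$ and $m-2+2q=p^s\beta$ with $p\nmid\alpha\beta$. It uses that $x$ and $y$ are both separating to force $r=s$, and extracts $p^r$-th roots to get the absolutely irreducible Fermat-type polynomial $a^{1/p^r}x^\alpha+b^{1/p^r}y^\beta-1$ vanishing on $\f$. It concludes that this polynomial coincides with $ax^n+by^m-1$. Sufficiency is simply referred to Theorem \ref{Theoconicas}.

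You instead apply the automorphism $x\mapsto\zeta x$, $y\mapsto y$ (with $\zeta^n=1$) to the identity and subtract. This gives $a(\zeta^{2q-2}-1)x^{n+2q-2}=0$, hence $n\mid 2(q-1)$, and likewise $m\mid 2(q-1)$. No degenerate case can occur, so your hedge there is unnecessary. With $e=2(q-1)/n$ and $f=2(q-1)/m$ the identity becomes
\[ aU^{e+1}+b^{-f}(1-aU)^{f+1}=1 \quad \text{in } \overline{\mathbb{F}}_q[U], \]
and your binomial argument does close it:
\begin{itemize}
\item comparing leading terms forces $e=f$;
\item the vanishing of $\binom{e+1}{k}$ for $0<k<e+1$ forces $e+1=p^r$;
\item the constant and top coefficients give $b^{p^r-1}=1$ and $a^{p^r}=a$.
\end{itemize}
Your direct check $ax^{n+2q-2}+by^{n+2q-2}=(ax^n+by^n)^{p^r}=1$ also makes sufficiency self-contained.

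What each route buys: the paper's argument is shorter, and it is the template reused for all the other cases (exponent valuations plus irreducibility of Fermat-type polynomials). However, it rests on the absolute irreducibility claim and on comparing two irreducible polynomials. Yours replaces these with the Kummer symmetry of $\f$ and elementary binomial arithmetic, and it produces the conditions on $a,b$ explicitly. The cost is that it depends on that symmetry.

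Two small corrections:
\begin{itemize}
\item Theorem \ref{Theoretas1} concerns $\F$-Frobenius nonclassicality w.r.t. $\Sigma_1$, so it does not give classicality w.r.t. $\Sigma_1$. Use instead Garcia's criterion, as in the proof of Proposition \ref{Prop2.4}(d). Alternatively, apply Proposition \ref{Prop1.1} at $P_\xi$: the orders $(0,1,n)$ give the product $n(n-1)/2$, which is prime to $p$.
\item $r\mid h$ does not come from $p\mid n-2$; it comes from $p^r-1\mid 2(p^h-1)$. Since $\gcd(p^r-1,p^h-1)=p^{d}-1$ with $d=\gcd(r,h)$, the quotient $(p^r-1)/(p^d-1)$ must divide $2$, which forces $d=r$.
\end{itemize}
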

\begin{proof}
    By Propositions~\ref{Prop2.5} and \ref{Prop2.6}(b), the $\F$-Frobenius nonclassicality of $\f$ is equivalent to the function 
\begin{equation}\label{E1}
ax^{n-2+2q}+by^{m-2+2q}-1
\end{equation}
being vanishing. We can write $n-2+2q=p^r \alpha$ and $m-2+2q=p^s \beta$, where $r,s>0$ and both $\alpha$ and $\beta$ are co-prime with $p$. If $r<s$, \eqref{E1} implies 
$$
a^{1/p^r}x^\alpha+b^{1/p^r}y^{p^{s-r}\beta}=1,
$$
which gives that $x$ is not a separating element in $\F(\f)$, a contradiction. The assumption $s<r$ leads to a contradiction as well. Thus $r=s$, and \eqref{E1} is equivalent to
\begin{equation}\label{E1.1}
a^{1/p^r}x^\alpha+b^{1/p^r}y^{\beta}-1
\end{equation}
being vanishing in $\F(\f)$, which means that \eqref{E1.1} is divisible by $ax^n+by^m-1$. Since  \eqref{E1.1} is absolutely irreducible, it vanishes if and only if $\alpha=n$ and $\beta=m$. Therefore
$$
m=n=\frac{2(q-1)}{p^r-1},
$$
and the result then follows from Theorem~\ref{Theoconicas}
\end{proof}

\begin{proposition}\label{Prop2.2'}
Let $p>5$ and suppose that $p \mid (n+1)$ and $p \mid (m+1)$. Then, the curve $\f$ defined over $\F$ is $\F$-Frobenius nonclassical w.r.t. $\Sigma_2$ if and only if $m=n$, $q=n+1$, and $a+b=1.$
\end{proposition}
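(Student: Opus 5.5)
Following the proof of Proposition~\ref{Prop2.1'}, the plan is to translate Frobenius nonclassicality into the vanishing of a single function on $\f$ and then compare it with the defining equation. Since $p\mid(n+1)$ and $p\mid(m+1)$, Theorem~\ref{Theoretas1} lets us assume $\f$ is classical w.r.t. $\Sigma_1$ (the nonclassical case needs $p\mid n-1$, impossible here). Then Propositions~\ref{Prop2.5} and \ref{Prop2.6}(b) apply. For a general point $P=(u:v:1)$ the osculating conic is
\[
\mathcal{H}_P:\ au^{n+1}YZ+bv^{m+1}XZ-XY=0 .
\]
So $\f$ is $\F$-Frobenius nonclassical iff $(u^q,v^q)\in\mathcal{H}_P$ for infinitely many $P$. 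In the function field this means
\[
ax^{n+1}y^q+by^{m+1}x^q-x^qy^q=0,\quad\text{i.e.}\quad ax^{n+1-q}+by^{m+1-q}=1 \text{ in } \overline{\mathbb{F}}_q(\f).
\]

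For the ``if'' direction, assume $m=n=q-1$ and $a+b=1$. The exponents become $0$, so the identity reads $a+b=1$ and holds; this also agrees with case (4) of Theorem~\ref{Theoconicas}.

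For the converse, write $n+1-q=p^r\alpha$ and $m+1-q=p^s\beta$ with $p\nmid\alpha\beta$, allowing $\alpha,\beta$ to be zero or negative. Suppose first that both exponents are nonzero. If $r\neq s$, say $r<s$, taking $p^r$-th roots gives $a^{1/p^r}x^{\alpha}+b^{1/p^r}y^{p^{s-r}\beta}=1$. This makes $x$ algebraic over $\overline{\mathbb{F}}_q(y^p)$, hence inseparable over $\overline{\mathbb{F}}_q(y)$ after checking degrees, contradicting that $x$ and $y$ are both separating. So $r=s$, and $a'x^\alpha+b'y^\beta=1$ on $\f$ with $p\nmid\alpha\beta$.

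Next, clear denominators when $\alpha$ or $\beta$ is negative, and compare with $ax^n+by^m=1$. Two approaches are available. One is to compute the divisors or pole orders of $x$ and $y$ at points at infinity and at the zeros $P_\xi,P_\rho$: for instance, $x$ vanishes to order $n$ only at points with $y^m=1/b$, and evaluating there gives $b'\xi^\beta=1$ for all $\xi$ with $b\xi^m=1$, which forces constraints on $\beta$ and $m$. The other is to use irreducibility and degree, as in Proposition~\ref{Prop2.1'}. I would rule out the case of negative or mixed signs by showing that a relation $a'x^\alpha+b'y^\beta=1$ on $\f$ forces $(\alpha,\beta)$ proportional to $(n,m)$, with matching coefficients. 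Indeed, $\overline{\mathbb{F}}_q(x,y)$ has degree $m$ over $\overline{\mathbb{F}}_q(x)$, and $y^m\in\overline{\mathbb{F}}_q(x)$ is the only power relation up to multiples: $y^\beta\in\overline{\mathbb{F}}_q(x)$ iff $m\mid\beta$ (Kummer theory, since $1-ax^n$ is not a $d$-th power for $d>1$). Hence $\beta=mk$ and $\alpha=nk'$, and then $b'(1-ax^n)^{k}/b^k+a'x^{nk'}=1$ as rational functions forces $k=k'=\pm1$. The case $k=-1$ is impossible, because $(1-ax^n)^{-1}$ is not a polynomial-type identity. So $k=1$, which gives $p^r n=n+1-q$, impossible since $r\geq 0$ and $q>1$ (with $r=0$, $\alpha=n$ gives $q=1$).

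Therefore some exponent is zero; say $n+1=q$. Then $by^{m+1-q}=1-a$ on $\f$. Since $y$ is transcendental, this forces $m+1-q=0$ and $a+b=1$; if $m+1\neq q$ it would force $b=0$ or put $y$ in the constants. This yields $m=n=q-1$ and $a+b=1$.

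I expect the main obstacle to be the mixed and negative exponent cases, $n+1<q$, where the clean absolute-irreducibility argument of Proposition~\ref{Prop2.1'} does not directly apply. The Kummer-theoretic step that identifies all multiplicative relations between $x$ and $y$ should handle these, and I would verify it carefully.
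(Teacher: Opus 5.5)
Your proof is correct, but it is organized differently from the paper's. The paper splits on the sign of $n+1-q$. For $n+1<q$ it clears denominators, notes that $ay^{q-m-1}+bx^{q-n-1}-x^{q-n-1}y^{q-m-1}$ must be a multiple of $ax^n+by^m-1$ in $\mathbb{F}_q[x,y]$, and sets $x=0$ to reach the absurd identity $ay^{q-m-1}=(by^m-1)h(0,y)$. For $n+1\ge q$ it appeals to the irreducibility argument of Proposition~\ref{Prop2.1'} to get $m=n$. It then reads off $q=n+1$ and $a+b=1$ from the Garcia--Voloch classification, Theorem~\ref{Theoconicas}, since only its case (4) is compatible with $p\mid n+1$ and $p>5$.

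You instead treat every pair of nonzero exponents uniformly, whatever their signs. Separability forces $r=s$. Kummer theory for $\overline{\mathbb{F}}_q(\f)/\overline{\mathbb{F}}_q(x)$, and symmetrically over $\overline{\mathbb{F}}_q(y)$, forces $m\mid\beta$ and $n\mid\alpha$. A rational-function identity in $T=x^n$ then forces $\alpha=n$, $\beta=m$, which contradicts $n+1-q=p^rn$. The vanishing-exponent case gives $m=n=q-1$ and $a+b=1$ directly.

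The paper's route is shorter and completely elementary when $n+1<q$. However, it relies on an external classification, and it leaves implicit the subcases of $n+1\ge q$ where $n+1-q$ or $m+1-q$ is zero or negative, to which Proposition~\ref{Prop2.1'} does not literally apply. Your route is self-contained and handles all sign patterns at once, at the cost of the Kummer step.

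A few points to make explicit when you write it up:
\begin{itemize}
\item The hypothesis $p\nmid k$, inherited from $p\nmid\beta$, is what excludes $k\ge 2$. After $T=0$ gives $b'b^{-k}=1$, the coefficient $-ka$ of $T$ in $(1-aT)^k-1$ is nonzero. This forces $k'=1$, and then $k=1$ by degree. Without it, $(1-aT)^{p^e}=1-a^{p^e}T^{p^e}$ would be a counterexample.
\item Negative $k'$ or $k$ is excluded because exactly one side would have a pole at $T=0$ or at $T=1/a$. This is sharper than ``not a polynomial-type identity''.
\item The contradiction when $r<s$ is that $x$ fails to be separating: applying $D^{(1)}_x$ yields $\alpha a'x^{\alpha-1}=0$. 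It is not a statement about $x$ being inseparable over $\overline{\mathbb{F}}_q(y)$.
\item Classicality w.r.t.\ $\Sigma_1$ comes from Garcia's criterion used in Proposition~\ref{Prop2.4}(d), namely nonclassical iff $p\mid n-1$ and $p\mid m-1$. It does not come from Theorem~\ref{Theoretas1}, which concerns Frobenius nonclassicality.
\end{itemize}
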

\begin{proof}
    By Propositions \ref{Prop2.5} and \ref{Prop2.6} (b), we have that $\f$ is $\F$-Frobenius nonclassical w.r.t. $\Sigma_2$ if and only if
\begin{equation}\label{E2.3'}
   ax^{n+1}y^q+by^{m+1}x^q-x^qy^q=0.
\end{equation}

Let us first assume that $n+1-q < 0$. In this case, we can rewrite \eqref{E2.3'} as
\begin{equation*}
   ay^{q-m-1}+bx^{q-n-1}-x^{q-n-1}y^{q-m-1}=0.
\end{equation*}

This means that
\begin{equation} \label{E1m}
   ay^{q-m-1}+bx^{q-n-1}-x^{q-n-1}y^{q-m-1} 
   =(ax^n+by^m-1)h(x,y)        
\end{equation}
for some $h(x,y) \in \mathbb{F}_q[x,y]\setminus \{0\}$. Substituting $x=0$ into \eqref{E1m}, we obtain
\begin{equation*}
   ay^{q-m-1}=(by^m-1)h(0,y),
\end{equation*}
which yields a contradiction.  

Thus, we may assume that $n+1-q \geq 0$, and in this case \eqref{E2.3'} can be rewritten as
\begin{equation} \label{E3}
   ax^{n+1}y^q+by^{m+1}x^q-x^qy^q=0 
   \iff ax^{\,n+1-q} +by^{m+1-q}-1=0.
\end{equation}

Using an argument analogous to that in the proof of Proposition \ref{Prop2.1'}, we conclude that \eqref{E3} is equivalent to $m=n$. The result then follows from Theorem~\ref{Theoconicas}.
\end{proof}

\begin{proposition}\label{Prop2.3'}
Let $p>5$ and suppose that $p \mid (2n-1)$ and $p \mid (2m-1)$. Then, the curve $\f$ defined over $\F$ is $\F$-Frobenius nonclassical w.r.t. $\Sigma_2$ if and only if 
\[ 
   m=n=\frac{q-1}{2(p^r-1)} 
\] 
for some $r<h$ with $r \mid h$, and $a^2,b^2 \in \mathbb{F}_{p^r}$.
\end{proposition}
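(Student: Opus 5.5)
Following the pattern of Propositions~\ref{Prop2.1'} and \ref{Prop2.2'}, we combine Proposition~\ref{Prop2.5} with Proposition~\ref{Prop2.6}(b). Since $p\mid(2n-1)$ and $p\mid(2m-1)$, and $p>5$ rules out $p\mid(n-1)$ or $p\mid(m-1)$ simultaneously with these, $\f$ is classical w.r.t.\ $\Sigma_1$. By Theorem~\ref{Teo2.3}(c) it is nonclassical w.r.t.\ $\Sigma_2$, so Proposition~\ref{Prop2.6} applies.

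For a general $P=(x:y:1)$, the osculating conic $\mathcal{H}_P$ is the third conic in Proposition~\ref{Prop2.5}. Putting $A=a^2x^{2n-1}$ and $B=b^2y^{2m-1}$, it reads
\[
(AX+BY-Z)^2=4ABXY .
\]
Hence $\Phi_q(P)=(x^q:y^q:1)\in\mathcal{H}_P$ for infinitely many $P$ if and only if the function
\[
(a^2x^{2n-1+q}+b^2y^{2m-1+q}-1)^2-4a^2b^2x^{2n-1+q}y^{2m-1+q}
\]
vanishes in $\overline{\mathbb{F}}_q(\f)$.

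This expression factors as $N(\sqrt{U},\sqrt{V})$, the norm of $\sqrt{U}+\sqrt{V}-1$, where $U=a^2x^{2n-1+q}$ and $V=b^2y^{2m-1+q}$. Thus the vanishing condition is equivalent to
\[
\epsilon_1\, a\,x^{(2n-1+q)/2}+\epsilon_2\, b\,y^{(2m-1+q)/2}=1
\]
for suitable signs $\epsilon_1,\epsilon_2$, holding on $\f$, possibly after passing to the function field adjoined square roots. To keep things within the function field, I would instead write $2n-1+q=p^r\alpha$ and $2m-1+q=p^s\beta$ with $p\nmid\alpha\beta$ and $r,s>0$; this is possible because $p\mid 2n-1$. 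As in the proof of Proposition~\ref{Prop2.1'}, separability of $x$ and $y$ forces $r=s$. Taking $p^r$-th roots then turns the condition into the vanishing of
\[
F(x,y)=(a'x^\alpha+b'y^\beta-1)^2-4a'b'x^\alpha y^\beta ,
\qquad a'=a^{2/p^r},\ b'=b^{2/p^r}.
\]

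The key step, and the main obstacle, is showing that $F$ vanishes on $\f$ exactly when $\alpha=2n$ and $\beta=2m$. For the backward direction, the identity $(ax^n+by^m)^2=1$, expanded as in \eqref{E2.16}, shows that $F$ vanishes whenever $\alpha=2n$, $\beta=2m$ and $a'=a^2$, $b'=b^2$. For the forward direction, $F$ factors over $\overline{\mathbb{F}}_q$ into the conjugates of $\sqrt{a'}x^{\alpha/2}\pm\sqrt{b'}y^{\beta/2}\pm1$, whether or not $\alpha$ and $\beta$ are even. The divisibility by $ax^n+by^m-1$ can then be handled as in \eqref{E1m}. First specialize $x=0$ and $y=0$ to compare the zeros $F(0,y)=(b'y^\beta-1)^2$ with $by^m-1$. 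This forces $\beta=2m$ and $b'=b^2$; symmetrically, $\alpha=2n$ and $a'=a^2$. I would double-check this degree and root comparison carefully, also using the point at infinity if needed.

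Finally, $\alpha=2n$ gives $2n(p^r-1)=q-1$, that is, $n=\frac{q-1}{2(p^r-1)}$, and likewise for $m$, so $m=n$. This requires $r\mid h$ and $r<h$. The conditions $a^{2/p^r}=a^2$ and $b^{2/p^r}=b^2$ mean exactly $a^2,b^2\in\mathbb{F}_{p^r}$. Once $m=n$, the statement is consistent with case (3) of Theorem~\ref{Theoconicas}, which we may also invoke directly for the converse.
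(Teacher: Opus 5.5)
Your reduction to the vanishing of $F(x,y)=(a'x^\alpha+b'y^\beta-1)^2-4a'b'x^\alpha y^\beta$ on $\f$ is fine. One small addition is needed for $r=s$, because the relation is now quadratic in $x^\alpha$ rather than linear as in Proposition~\ref{Prop2.1'}. If $r<s$, then $u=a'x^\alpha$ is a root of a monic quadratic over $K^p$, where $K=\overline{\mathbb{F}}_q(\f)$. Since $[K:K^p]=p$ is odd, this forces $u\in K^p$, so $x$ is not separating.

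The genuine gap is the key step. Because $by^m-1$ is separable, specializing $x=0$ only says that every root of $y^m=b^{-1}$ is a root of $b'y^\beta-1$, i.e.\ $m\mid\beta$ and $b'=b^{\beta/m}$. Likewise $y=0$ only gives $n\mid\alpha$ and $a'=a^{\alpha/n}$. This does not force $\beta=2m$. Take $p=7$, $q=7^4$, $n=m=480$ and $a^5=b^5=1$. Then $2n-1=959=7\cdot 137$ and $2n-1+q=7\cdot 480$, so $r=s=1$, $\alpha=\beta=480$, $a'=a$ and $b'=b$. Both specializations are divisible as required, yet on $\f$ one has $F=(ax^n+by^m-1)^2-4abx^ny^m=-4abx^ny^m\neq 0$. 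No root comparison on the axes can separate this case from the true one.

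What is missing is an argument that uses the relation away from the axes. Within your set-up it goes as follows. With $\alpha=kn$, $\beta=lm$, $a'=a^k$, $b'=b^l$, put $T=ax^n$, so that $by^m=1-T$ on $\f$ and $T$ is transcendental. The vanishing of $F$ becomes the identity $(T^k+(1-T)^l-1)^2=4T^k(1-T)^l$ in $\overline{\mathbb{F}}_q[T]$.
\begin{itemize}
\item Comparing degrees gives $k=l$.
\item If $k$ is odd, the left side has degree at most $2k-2$ while the right side has degree $2k$, a contradiction.
\item If $k=2d$, taking square roots in the domain $\overline{\mathbb{F}}_q[T]$ gives $T^d\mp(1-T)^d=\pm1$. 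Differentiating, with $p\nmid d$ because $p\nmid\alpha$, forces $d=1$.
\end{itemize}
Hence $k=l=2$, and your final paragraph then goes through.

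The paper takes a shorter route at this point. After evaluating at $x=0$ to get $m\mid q-1$, it substitutes $y^m=(1-ax^n)/b$ into the Frobenius relation and compares degrees in $x$ to obtain $m=n$. It then quotes case (3) of Theorem~\ref{Theoconicas} for Fermat curves, rather than deriving $n=\frac{q-1}{2(p^r-1)}$ and $a^2,b^2\in\mathbb{F}_{p^r}$ by hand.
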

\begin{proof}
Assume that $\f$ is $\F$-Frobenius nonclassical w.r.t. $\Sigma_2$.From Propositions \ref{Prop2.5} and \ref{Prop2.6} (b), we have
\begin{equation}\label{aux2m-1}
   a^4x^{4n-2+2q}+b^4y^{4m-2+2q}+1
   -2a^2b^2x^{2n-1+q}y^{2m-1+q}
   -2a^2x^{2n-1+q}
   -2b^2y^{2m-1+q}=0.
\end{equation}
This means that 
\begin{equation}\label{Eq2.8'}
       \left(b^2y^{2m-1+q}-1\right)^2+a^2x^{2n-1+q}\left(a^2x^{2n-1+q}-2b^2y^{2m-1+q}-2\right)=(ax^n+by^m-1)h(x,y),  
    \end{equation}
    for some $h(x,y) \in \F[x,y]\setminus\{0\}$. Evaluating both sides of \eqref{Eq2.8'} at $x=0$ yields
    \begin{equation*}
        \left(b^2y^{2m-1+q}-1\right)^2=(by^m-1)h(0,y)
    \end{equation*}
    and thus $m\mid 2m-1+q$. In particular $m|q-1$. Replacing $y^m$ with $(1-ax^n)/b$ in \eqref{aux2m-1} we obtain
    $$
    \left(b^2\left(\frac{1-ax^n}{b} \right)^{2+\frac{q-1}{m}}-1\right)^2=-a^2x^{2n-1+q}\left(a^2x^{2n-1+q}-2b^2\left(\frac{1-ax^n}{b} \right)^{2+\frac{q-1}{m}}-2\right).
    $$
Comparing the degrees of both sides of the equation above, we conclude that $m=n$, and the result again is a consequence of Theorem~\ref{Theoconicas}. 
\end{proof}

\begin{proposition}\label{Prop2.7'}
Let $p>2$ and suppose that $p \mid (n-1)$ and $p \mid (m-1)$. Then, the curve $\f$ defined over $\F$ is $\F$-Frobenius nonclassical w.r.t. $\Sigma_2$.
\end{proposition}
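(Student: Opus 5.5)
The natural route is through the order sequences. We never need the explicit osculating conic. By \cite[Theorem 1]{Gar} (already used in item (d) of Proposition~\ref{Prop2.4}), the hypotheses $p\mid(n-1)$ and $p\mid(m-1)$ make $\f$ nonclassical w.r.t. $\Sigma_1$. So its $\Sigma_1$-order sequence is $(0,1,p^r)$ for some $r\geq 1$. In particular, at a general ($\Sigma_1$-ordinary) point $P$ the tangent line $T_P$ satisfies $I(P,\f\cap T_P)=p^r$, while every other line through $P$ meets $\f$ at $P$ with multiplicity exactly $1$.

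The first step is to read off the $(\Sigma_2,P)$-orders at such a general point from degenerate conics. Take $L_1,L_2$ two distinct non-tangent lines through $P$, and let $L$ be a line not through $P$. The conics $L\cdot L$, $L_1\cdot L$, $L_1\cdot L_2$, $T_P\cdot L$, $T_P\cdot L_1$ and $T_P\cdot T_P$ realize the intersection multiplicities
\[
0,\;1,\;2,\;p^r,\;p^r+1,\;2p^r .
\]
Since $p>2$, these six values are distinct (the case $p^r=2$ cannot occur). There are exactly $M+1=6$ orders, so this is the full $(\Sigma_2,P)$-order sequence. Because $P$ is general, it coincides with the order sequence $(\varepsilon_0,\dots,\varepsilon_5)$ of $\f$ w.r.t. $\Sigma_2$. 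To be careful, one checks that a general point is simultaneously $\Sigma_1$- and $\Sigma_2$-ordinary, which holds since each family of Weierstrass points is finite.

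The second step uses the result recalled after \eqref{Eq1.4}: $(\nu_0,\dots,\nu_4)$ is obtained from $(\varepsilon_0,\dots,\varepsilon_5)$ by deleting a single $\varepsilon_I$ with $I\ge 1$. Hence $\nu_4\geq \varepsilon_4=p^r+1$. If $p^r\geq 5$, then $\nu_4\geq 6>4$, so $\nu\neq(0,1,2,3,4)$ and $\f$ is $\F$-Frobenius nonclassical w.r.t. $\Sigma_2$, as claimed. This covers every $p\geq 5$, and also $p=3$ whenever $r\geq 2$.

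The main obstacle is the residual case $p=3$, $r=1$. There $\varepsilon=(0,1,2,3,4,6)$, and deleting $\varepsilon_5=6$ would formally give the classical sequence $(0,1,2,3,4)$. To exclude this, I would show that the Frobenius Wronskian \eqref{Eq1.4} with rows $0,1,2,3,4$ vanishes identically. Equivalently, as in Proposition~\ref{Prop2.6}(b), I would show that $\Phi_q(P)$ lies on the degenerate osculating conic $T_P^2$, i.e. on $T_P$, for general $P$. My first attempt would be a direct computation with the explicit tangent line
\[
T_P:\; anu^{n-1}(X-u)+bmv^{m-1}(Y-v)=0 .
\]
Using $n\equiv m\equiv 1 \pmod 3$, this reduces to $au^{n-1}X+bv^{m-1}Y-Z=0$. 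One then checks whether $ax^{n-1+q}+by^{m-1+q}=1$ holds in $\F(\f)$, exactly as in the proof of Proposition~\ref{Prop2.1'}. If it does not hold in general, this sub-case needs extra hypotheses, and I would then restrict the statement to $p>3$, consistent with the standing assumption $p>5$ of this subsection, under which the argument above is already complete.
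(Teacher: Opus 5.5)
Your argument for $p\ge 5$ is the paper's argument. You get nonclassicality w.r.t.\ $\Sigma_1$ from Garcia's criterion, use degenerate conics to obtain $\varepsilon=(0,1,2,p^r,p^r+1,2p^r)$, and use that $\nu$ is $\varepsilon$ with one entry $\varepsilon_I$, $I\ge 1$, deleted. Your inequality $\nu_4\ge\varepsilon_4=p^r+1$ makes the last step explicit and settles every case with $p^r\ge 5$, in particular all $p\ge 5$.

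The case you isolated is a real hole, and the paper's proof has it too. When $p^r=3$, $(0,1,2,3,4)$ is itself a subsequence of $(0,1,2,3,4,6)$, so the paper's ``subsequence'' sentence proves nothing there. In fact the statement is false in that case, so no further argument can close it.

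Since $\varepsilon_i=i$ for $i\le 4$, at a general point $P$ the vectors $(D^{(i)}_\tau\varphi)(P)$, $0\le i\le 4$, determine the osculating conic $\mathcal{H}_P$, which here is the double line $T_P^2$. Hence the determinant \eqref{Eq1.4} with rows $0,\dots,4$ vanishes identically if and only if $\Phi_q(P)\in T_P$ for infinitely many $P$. So in this case $\f$ is $\F$-Frobenius nonclassical w.r.t.\ $\Sigma_2$ exactly when it is so w.r.t.\ $\Sigma_1$, and that is precisely your test $ax^{n-1+q}+by^{m-1+q}=1$.

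This test fails in general. Take $p=3$, $q=27$, $a=b=1$, $n=m=4$, so that $p^r=3$. Then $x^{30}+y^{30}-1=(x^{10}+y^{10}-1)^3$. On $\f$ we have $y^{10}=(1-x^4)^2y^2$, so $x^{10}+y^{10}=1$ would give $y^2\in\overline{\mathbb{F}}_q(x)$. That contradicts $[\overline{\mathbb{F}}_q(x,y):\overline{\mathbb{F}}_q(x)]=4$, consistent with Theorem~\ref{Theoretas}, which would require $n=13$. Thus $x^4+y^4=1$ is $\mathbb{F}_{27}$-Frobenius classical w.r.t.\ $\Sigma_2$, although $3\mid n-1$ and $3\mid m-1$.

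The hypothesis should read $p\ge 5$, which the standing assumption $p>5$ of the subsection provides. Under it your proof is complete, and restricting the statement is a necessary correction, not a weakness of your proposal.
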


\begin{proof}
From \cite[Theorem 1]{Gar}, $\f$ is nonclassical w.r.t. $\Sigma_1$. Hence there exists $r>0$ such that the order sequence of $\f$ w.r.t $\Sigma_1$ is $(0,1,p^r)$. Considering degenerate conics, we conclude that the order sequence of $\f$ w.r.t $\Sigma_2$ is $(0,1,2,p^r,p^r+1,2p^r)$. The conclusion follows from the fact that the $\F$-Frobenius order sequence w.r.t. $\Sigma_2$ is a subsequence of $(0,1,2,p^r,p^r+1,2p^r)$.
\end{proof}

\begin{proposition}\label{Prop2.8'}
Let $p>5$ and suppose that $p \mid (n-2)$ and $p \mid (m-1)$. Then, the curve $\f$ defined over $\F$ is $\F$-Frobenius nonclassical w.r.t. $\Sigma_2$ if and only if 
\[
   n=2m=\frac{2(q-1)}{p^r-1}
\] 
for some $r<h$ with $r \mid h$ and $a,b \in \mathbb{F}_{p^r}$.
\end{proposition}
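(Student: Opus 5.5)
By Proposition~\ref{Prop2.5} (the case $p \mid (m-1)$, $p\mid (n-2)$), the osculating conic at a general point $P=(u:v:1)$ is
\[
\mathcal{H}_P:\ a u^{n-2} X^2 + b v^{m-1} YZ - Z^2 = 0.
\]
We are in the setting of Proposition~\ref{Prop2.6}: $p\nmid (n-1)$ because $p\mid(n-2)$, so by Theorem~\ref{Teo2.3}(d) applied to lines (equivalently \cite[Theorem 1]{Gar}) $\f$ is classical w.r.t.\ $\Sigma_1$, and it is nonclassical w.r.t.\ $\Sigma_2$ by Theorem~\ref{Teo2.3}. Hence by Proposition~\ref{Prop2.6}(b), $\F$-Frobenius nonclassicality is equivalent to $\Phi_q(P)\in\mathcal{H}_P$ for infinitely many $P$. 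In the function field this says
\begin{equation*}
a x^{n-2+2q} + b y^{m-1+q} - 1 = 0 \quad\text{in } \overline{\mathbb{F}}_q(\f).
\end{equation*}
So the plan is to reduce everything to this identity, exactly as in Proposition~\ref{Prop2.1'}.

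\textbf{Necessity.} Write $n-2+2q=p^r\alpha$ and $m-1+q=p^s\beta$ with $p\nmid\alpha\beta$. Here $r,s>0$ because $p\mid n-2$ and $p\mid m-1$.
\begin{itemize}
\item If $r<s$, taking $p^r$-th roots gives $a^{1/p^r}x^\alpha + b^{1/p^r}y^{p^{s-r}\beta}=1$. Then $x$ is a function of a $p$-th power, so $x$ is not separating, a contradiction. The case $s<r$ is symmetric, so $r=s$.
\item With $r=s$ we get $a^{1/p^r}x^\alpha+b^{1/p^r}y^\beta-1=0$ on $\f$. Hence this polynomial is divisible by the absolutely irreducible $ax^n+by^m-1$.
\item This is the step I expect to need the most care. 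I would argue that $a^{1/p^r}X^\alpha+b^{1/p^r}Y^\beta-1$ is itself absolutely irreducible, since $p\nmid\alpha\beta$ (the argument used in Proposition~\ref{Prop2.1'}). The two polynomials are then proportional, and comparing constant terms forces equality: $\alpha=n$, $\beta=m$, $a^{1/p^r}=a$, $b^{1/p^r}=b$. Thus $a,b\in\mathbb{F}_{p^r}$.
\item The equalities give $n(p^r-1)=2(q-1)$ and $m(p^r-1)=q-1$, so $n=2m=\frac{2(q-1)}{p^r-1}$.
\item Since $m$ is an integer, $p^r-1\mid p^h-1$, which gives $r\mid h$. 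If $r=h$ then $m=1$, contradicting $m>2$; hence $r<h$.
\end{itemize}

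\textbf{Sufficiency.} Assume $n=2m=\frac{2(q-1)}{p^r-1}$ with $r\mid h$, $r<h$, and $a,b\in\mathbb{F}_{p^r}$. Then $np^r=n+2q-2$ and $mp^r=m+q-1$, so
\[
a x^{n-2+2q}+b y^{m-1+q}-1 = a x^{np^r}+by^{mp^r}-1=(ax^n+by^m-1)^{p^r}=0
\]
in $\F(\f)$, using $a^{p^r}=a$ and $b^{p^r}=b$. Hence $\Phi_q(P)\in\mathcal{H}_P$ for all general $P$, and Proposition~\ref{Prop2.6}(b) gives $\F$-Frobenius nonclassicality.
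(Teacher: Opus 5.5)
Your proposal is correct and follows essentially the same route as the paper. You reduce via Propositions~\ref{Prop2.5} and \ref{Prop2.6}(b) to the vanishing of $ax^{n-2+2q}+by^{m-1+q}-1$, use separability of $x$ and $y$ to force the two exponents to have the same $p$-adic valuation, and then compare with $ax^n+by^m-1$ by absolute irreducibility. You also spell out things the paper leaves implicit: the deduction of $r\mid h$ and $r<h$ (the latter from $m>2$), and the converse direction. One small correction: classicality w.r.t.\ $\Sigma_1$ should be cited from \cite[Theorem 1]{Gar} alone, not from Theorem~\ref{Teo2.3}, which concerns conics.
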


\begin{proof}
By Propositions \ref{Prop2.5} and \ref{Prop2.6} (b), we have that $\f$ is $\F$-Frobenius nonclassical w.r.t. $\Sigma_2$ if and only if
\begin{equation} \label{E2.26}
   ax^{n-2+2q}+by^{m-1+q}-1=0.
\end{equation}
Let $p^r$ be the highest power of $p$ that divides $m-1+q$. Since both $x$ and $y$ are separating elements, we have that $p^r$ is the highest power of $p$ dividing $n-2+2q$ as well. Hence from \eqref{E2.26} we have 
\begin{equation} \label{E2.26-2}
   a^{1/p^r}x^{\frac{n-2+2q}{p^r}}+b^{1/p^r}y^{\frac{m-1+q}{p^r}}-1=0.
\end{equation}
Note that since the both $(n-2+2q)/p^r$ and $(m-1+q)/p^r$ are not divisible by $p$, the left side of \eqref{E2.26-2} is absolutely irreducible. Thus \eqref{E2.26-2} is equivalent to 
$$
a^{1/p^r}x^{\frac{n-2+2q}{p^r}}+b^{1/p^r}y^{\frac{m-1+q}{p^r}}-1=\lambda(ax^n+by^m-1),
$$
where $\lambda \in \overline{\mathbb{F}}_q$. This is equivalent to $\lambda=1$, $n-2+2q=np^r$, $m-1+q=mp^r$, $a=a^{1/p^r}$ and $b=b^{1/p^r}$.
\end{proof}

\begin{proposition} \label{Prop2.9'}
Let $p>5$ and suppose that $p \mid (2n-1)$ and $p \mid (m-1)$. Then the curve $\f$ defined over $\F$ is $\F$-Frobenius classical w.r.t. $\Sigma_2$.
\end{proposition}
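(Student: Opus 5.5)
Since $p\mid(2n-1)$ and $p\mid(m-1)$, we cannot have $p\mid(n-1)$: otherwise $p\mid (2n-1)-2(n-1)=1$. Hence, by Theorem~\ref{Theoretas1} and \cite[Theorem 1]{Gar}, $\f$ is classical w.r.t. $\Sigma_1$. By Theorem~\ref{Teo2.3}(d) it is nonclassical w.r.t. $\Sigma_2$. So Propositions~\ref{Prop2.5} and \ref{Prop2.6} apply. The plan is to show that the Frobenius condition of Proposition~\ref{Prop2.6}(b) cannot hold. The relevant osculating conic is
\[
\mathcal{H}_P:\ a^2u^{2n-1}XZ-b^2v^{2m-2}Y^2+2bv^{m-1}YZ-Z^2=0.
\]
Substituting $\Phi_q(P)=(u^q:v^q:1)$ and passing to the function field, $\F$-Frobenius nonclassicality is equivalent to the identity
\begin{equation*}
a^2x^{2n-1+q}=\left(by^{m-1+q}-1\right)^2 \quad\text{in } \overline{\mathbb{F}}_q(\f).
\end{equation*}

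The key step is a comparison of pole orders at the points at infinity of $\f$. Let $Q$ be a place of $\overline{\mathbb{F}}_q(\f)$ lying over a point at infinity. From $ax^n+by^m=1$ one gets $v_Q(x)<0$, $v_Q(y)<0$ and $n\,v_Q(x)=m\,v_Q(y)$. On the right-hand side, the constant $-1$ does not affect the leading pole, since $v_Q(y^{m-1+q})<0$. Therefore $v_Q$ of the right-hand side is $2(m-1+q)v_Q(y)$. On the left-hand side it is $(2n-1+q)v_Q(x)$. Equating these and using $v_Q(y)=\frac{n}{m}v_Q(x)$ gives
\[
(2n-1+q)m=2(m-1+q)n \iff m(q-1)=2n(q-1)\iff m=2n.
\]
This contradicts the standing assumption $n\ge m>2$. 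Hence the identity fails, the Frobenius image of a general point does not lie on $\mathcal{H}_P$, and by Proposition~\ref{Prop2.6}(b) the curve is $\F$-Frobenius classical w.r.t. $\Sigma_2$.

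The main point needing care is the assumption $n\ge m$. In this proposition the roles of $n$ and $m$ are not symmetric, so if that normalization is not available one must also rule out $m=2n$. I would do this as in Proposition~\ref{Prop2.3'}. First write the identity as a divisibility by $ax^n+by^m-1$ in $\F[x,y]$. Then evaluate at $y=0$, which gives $(ax^n-1)\mid a^2x^{2n-1+q}$, an immediate contradiction since $ax^n-1$ does not divide a monomial. This evaluation argument in fact works for all $n,m$, so I would present it as a backup (or as the main argument) alongside the valuation computation.
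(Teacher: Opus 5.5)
Your main argument is correct, and it reaches the paper's contradiction $m=2n$ by a different mechanism. The paper rewrites the Frobenius condition as $(ax^{n+\frac{q-1}{2}})^2-(by^{m-1+q}-1)^2=0$ and splits it into two factors. It then extracts a $p^r$-th root, which needs separability of $x$ and $y$ to know that the same $p^r$ exactly divides both exponents. Finally it uses absolute irreducibility of $a^{1/p^r}x^{\frac{2n+q-1}{2p^r}}\pm(b^{1/p^r}y^{\frac{m+q-1}{p^r}}-1)$ to force this to be a scalar multiple of $ax^n+by^m-1$, and reads off the exponents. Your comparison of pole orders at a place over infinity, using $n\,v_Q(x)=m\,v_Q(y)<0$, gives $m(q-1)=2n(q-1)$ directly, with no root extraction, no irreducibility claim and no factor matching. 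The paper's route does give more: the exact exponents, hence $q-1=m(p^r-1)$ and the conditions on $a,b$. That extra information is what the mirror case, Proposition~\ref{Prop2.4'}, needs, since nonclassicality actually occurs there. For the present classicality statement, your valuation argument suffices and is cleaner.

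The backup paragraph, however, is wrong and should be dropped. Evaluating $a^2x^{2n-1+q}-(by^{m-1+q}-1)^2=(ax^n+by^m-1)h(x,y)$ at $y=0$ gives $a^2x^{2n-1+q}-1=(ax^n-1)h(x,0)$, because $(b\cdot 0-1)^2=1$. You lost this constant term, and $ax^n-1$ may well divide $a^2x^{2n-1+q}-1$. Moreover, the claim that the argument ``works for all $n,m$'' is false. Interchanging the roles of $x$ and $y$ in Proposition~\ref{Prop2.4'}, the curve with $p\mid(2n-1)$ and $p\mid(m-1)$ is $\F$-Frobenius nonclassical whenever $m=2n=\frac{q-1}{p^r-1}$ and $a,b\in\mathbb{F}_{p^r}$. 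For instance, take $q=49$, $n=4$, $m=8$ and $a,b\in\mathbb{F}_7$. Then on $\f$ one has $by^{56}=(by^8)^7=(1-ax^4)^7=1-ax^{28}$, so $a^2x^{56}=(by^{56}-1)^2$ holds identically, and $ax^4-1$ does divide $a^2x^{56}-1=(ax^4)^{14}-1$. So the case $m=2n$ cannot be excluded. The normalization $n\ge m$ fixed at the start of Section~3 is an essential hypothesis of this proposition, not a convenience, and your valuation argument is valid precisely because it uses it.
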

\begin{proof}
   Again by Propositions \ref{Prop2.5} and \ref{Prop2.6} (b), $\f$ is $\F$-Frobenius nonclassical w.r.t. $\Sigma_2$ if and only if
\begin{equation}\label{E2.28}
   a^2x^{2n-1+q} - b^2y^{2m-2+2q} + 2b y^{m-1+q} - 1 = 0 \Longleftrightarrow \\
   (ax^{n+\frac{q-1}{2}})^2-(by^{m-1+q}-1)^2=0.
\end{equation}
The last equality is equivalent to 
$$
a^{1/p^r}x^{\frac{2n+q-1}{2p^r}} \pm (b^{1/p^r}y^\frac{m+q-1}{p^r}-1)=0,
$$
where $p^r$ is the common highest power of $p$ dividing both $2n+q-1$ and $m+q-1$. Since the left side of the last equality is irreducible in both $\pm$ cases, we obtain that $(2n+q-1)/2p^r=n$ and $(m+q-1)/p^r=m$, and then $2n=m$. This is a contradiction since we are assuming that $n \geq m$.
\end{proof}

\begin{proposition}\label{Prop2.10'}
Let $p>5$ and suppose that $p \mid (n+1)$ and $p \mid (m-1)$. Then the curve $\f$ defined over $\F$ is $\F$-Frobenius classical w.r.t. $\Sigma_2$.
\end{proposition}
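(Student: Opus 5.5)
The plan is to run the same scheme as in Propositions \ref{Prop2.2'} and \ref{Prop2.8'}: translate Frobenius nonclassicality into the vanishing of an explicit function in $\F(\f)$, and then show that this vanishing is impossible by comparing exponents.

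\emph{Step 1: reduce to a function identity.} First I check the hypotheses of Propositions \ref{Prop2.5} and \ref{Prop2.6}. Since $p\mid(n+1)$ and $p>2$, we have $p\nmid(n-1)$. Hence $\f$ is classical w.r.t. $\Sigma_1$ by \cite[Theorem 1]{Gar}, and it is nonclassical w.r.t. $\Sigma_2$ by Proposition \ref{Prop2.4}(j). Proposition \ref{Prop2.5} gives the osculating conic $\mathcal{H}_P: bv^{m-1}XY+au^{n+1}Z^2-XZ=0$. Substituting $(u,v)=(x,y)$ and evaluating at $(x^q:y^q:1)$, Proposition \ref{Prop2.6}(b) says that $\f$ is $\F$-Frobenius nonclassical if and only if
\[
b\,x^{q}y^{m-1+q}+a\,x^{n+1}-x^{q}=0 \quad \text{in } \F(\f).
\]

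\emph{Step 2: exclude $n+1<q$ and $n+1=q$.} Suppose $n+1<q$. Dividing by $x^{n+1}$ gives $b\,x^{q-n-1}y^{m-1+q}+a-x^{q-n-1}=0$. Then this polynomial equals $(ax^n+by^m-1)h(x,y)$ for some nonzero $h$. Setting $x=0$ yields $a=(by^m-1)h(0,y)$. This is impossible: $a\neq0$, so $h(0,y)$ would be a nonzero polynomial, and then the right side has positive degree in $y$. Suppose instead $n+1=q$. The identity becomes $by^{m-1+q}+a-1=0$, which forces $y$ to be constant, a contradiction.

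\emph{Step 3: the case $n+1>q$.} Dividing by $x^q$ gives $a\,x^{n+1-q}+b\,y^{m-1+q}-1=0$. Both exponents are divisible by $p$, because $p\mid q$, $p\mid(n+1)$ and $p\mid(m-1)$. Since $x$ and $y$ are both separating, the argument of Proposition \ref{Prop2.8'} shows that the highest powers of $p$ dividing the two exponents coincide; call it $p^r$, with $r\geq1$. Taking $p^r$-th roots gives the absolutely irreducible polynomial $a^{1/p^r}x^{(n+1-q)/p^r}+b^{1/p^r}y^{(m-1+q)/p^r}-1$. It vanishes on $\f$, so it must equal $\lambda(ax^n+by^m-1)$ for some $\lambda\in\overline{\mathbb{F}}_q$. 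Comparing the exponent of $x$ gives $n+1-q=np^r\geq pn>n+1$, which is impossible. Hence $\f$ is $\F$-Frobenius classical w.r.t. $\Sigma_2$.

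The main obstacle is Step 3. There I must justify carefully that the $p$-adic valuations of the two exponents agree, since otherwise one coordinate would be a $p$-th power and hence not separating. I must also justify that the resulting binomial-type polynomial is irreducible, so that it divides, and then equals a multiple of, the defining polynomial of $\f$.
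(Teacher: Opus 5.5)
Your proof is correct, and Steps 1 and 2 match the paper's proof. You derive the same identity $by^{m-1+q}x^q+ax^{n+1}-x^q=0$ from Propositions \ref{Prop2.5} and \ref{Prop2.6}(b), and you rule out $n+1<q$ with the same $x=0$ substitution used in Proposition \ref{Prop2.2'}.

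The only real difference is in the case $n+1\geq q$. The paper handles it in one line. It observes that $ax^{n+1-q}+by^{m-1+q}-1$ is a nonzero polynomial of degree $n+1-q<n$ in $x$. So it cannot be a multiple of $ax^n+by^m-1$, whose leading coefficient in $x$ is the constant $a$; equivalently, it would contradict $[\F(\f):\F(y)]=n$.

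Your route for $n+1>q$ is also valid. You extract $p^r$-th roots, match the $p$-adic valuations of the two exponents using separability of $x$ and $y$, and use absolute irreducibility to force $n+1-q=np^r$. The two points you flagged as the main obstacle are exactly the arguments already carried out in Propositions \ref{Prop2.1'} and \ref{Prop2.8'}, so they can be filled in.

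That machinery is not needed here, though. The plain degree count requires neither the valuation comparison nor the irreducibility of the root-extracted polynomial. It also covers your separate subcase $n+1=q$ automatically, since there the $x$-degree is $0<n$ and the polynomial is nonzero because $b\neq 0$.
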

\begin{proof}
   
By Propositions \ref{Prop2.5} and \ref{Prop2.6} (b), $\F$ is $\F$-Frobenius nonclassical w.r.t. $\Sigma_2$ if and only if
\begin{equation}\label{E2.30}
   b y^{m-1+q} x^q + a x^{n+1} - x^q = 0.
\end{equation}
An analogous argument to that in Proposition \ref{Prop2.2'} shows that the case $n+1<q $ is impossible. On the other hand, if $n+1 \geq q$, we obtain $by^{m-1+q}+ax^{n+1-q}=1$, which has degree $<n$ in $x$, leading to a contradiction.
\end{proof}

\begin{proposition}\label{Prop2.4'}
Let $p>5$ and suppose that $p \mid (n-1)$ and $p \mid (2m-1)$. Then the curve $\f$ defined over $\F$ is $\F$-Frobenius nonclassical w.r.t. $\Sigma_2$ if and only if
\[
   2m = n = \frac{q-1}{p^r-1}
\]
for some $r<h$ where either $r \mid h$ and $a,b \in \mathbb{F}_{p^r}$ or $2r \mid h$, $a \in \mathbb{F}_{p^r}$ and $b \in \mathbb{F}_{p^{2r}}$ with $b^{p^r}=-b$.
\end{proposition}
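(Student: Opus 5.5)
Following the pattern of the preceding cases, I will first apply Proposition~\ref{Prop2.5} to the case $p\mid(n-1)$, $p\mid(2m-1)$. With the roles of $x$ and $y$ swapped relative to the case $p\mid(m-1)$, $p\mid(2n-1)$, the osculating conic at a general point $P=(u:v:1)$, $uv\neq 0$, is
\[
b^2v^{2m-1}YZ-a^2u^{2n-2}X^2+2au^{n-1}XZ-Z^2=0 .
\]
Theorem~\ref{Theoretas1} shows that $\f$ is classical w.r.t. $\Sigma_1$ here: $p\mid(2m-1)$ while $p\mid(n-1)$, so $p\nmid(m-1)$. Hence Proposition~\ref{Prop2.6}(b) applies, and Frobenius nonclassicality is equivalent to $\Phi_q(P)\in\mathcal{H}_P$ for a general $P$. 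In the function field this becomes the identity
\[
b^2y^{2m-1+q}-a^2x^{2n-2+2q}+2ax^{n-1+q}-1=0,
\qquad\text{i.e.}\qquad
b^2y^{2m-1+q}=\bigl(ax^{n-1+q}-1\bigr)^2 .
\]

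Next I extract the $p$-power. Let $p^r$ be the exact power of $p$ dividing $n-1+q$. Since $x$ and $y$ are both separating, the same argument as in Propositions~\ref{Prop2.1'} and \ref{Prop2.8'} shows that $p^r$ is also the exact power dividing $2m-1+q$. Taking $p^r$-th roots gives
\[
b^{2/p^r}y^{\frac{2m-1+q}{p^r}}=\Bigl(a^{1/p^r}x^{\frac{n-1+q}{p^r}}-1\Bigr)^2 .
\]
Set $A=(n-1+q)/p^r$ and $B=(2m-1+q)/p^r$. I then use the relation $by^m=1-ax^n$ to compare the two sides, splitting by the parity of $B$.

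If $B$ is even, the identity factors as
\[
b^{1/p^r}y^{B/2}=\pm\bigl(a^{1/p^r}x^{A}-1\bigr),
\]
an absolutely irreducible trinomial. It must therefore be a scalar multiple of $ax^n+by^m-1$, which forces $A=n$, $B/2=m$ and $a^{1/p^r}=\lambda a$, $\pm b^{1/p^r}=\lambda b$ for a suitable $\lambda$. The constant term gives $\lambda=1$, so $a\in\mathbb{F}_{p^r}$ and $b^{p^r}=\pm b$, hence $b\in\mathbb{F}_{p^r}$ or $b^{p^r}=-b$ (so $b\in\mathbb{F}_{p^{2r}}$). The exponents give $n(p^r-1)=q-1$ and $2m(p^r-1)=q-1$, i.e. $2m=n=(q-1)/(p^r-1)$. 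Since $p^r-1\mid q-1$, $r\mid h$. When $b^{p^r}=-b$ one needs $b\in\F$ with $b\notin\mathbb{F}_{p^r}$ but $b^2\in\mathbb{F}_{p^r}$, which requires $\mathbb{F}_{p^{2r}}\subseteq\F$, i.e. $2r\mid h$. That $r<h$ follows from $n>1$.

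If $B$ is odd, I must rule it out. Evaluating the identity at the points with $x$ satisfying $ax^{n}=1$ gives $y=0$ on $\f$ with multiplicity structure, and I would compare valuations at the place(s) over $y=0$. The left side has valuation $B\cdot v(y)$, where $v(y)=1$ at the points $(\rho,0)$ since $p\nmid m$. The right side has even valuation there because it is a square. This parity contradiction kills odd $B$. I expect this to be the main obstacle: I need the points $(\rho,0)$ to be simple with $y$ a local parameter, and I must check that $a^{1/p^r}x^A-1$ really vanishes there (it vanishes to order $B/2$, which must be an integer). A fallback is a degree comparison after substituting $y^m=(1-ax^n)/b$, as in Proposition~\ref{Prop2.3'}.

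For the converse, I substitute the stated values. Then $ax^{n-1+q}=ax^{np^r}=(ax^n)^{p^r}$ and $b^2y^{2m-1+q}=b^2y^{2mp^r}$. Using $(ax^n-1)^{p^r}=(-by^m)^{p^r}=\mp b\,y^{mp^r}$ and squaring, the identity follows in both the case $b\in\mathbb{F}_{p^r}$ and the case $b^{p^r}=-b$.
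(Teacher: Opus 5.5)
Your proposal is correct and follows essentially the paper's route: you impose the Frobenius condition on the osculating conic of Proposition~\ref{Prop2.5} via Proposition~\ref{Prop2.6}(b), write it as a difference of two squares, extract the common $p$-power, and compare the resulting absolutely irreducible trinomial with $ax^n+by^m-1$; you also work out the field conditions ($2r\mid h$ when $b^{p^r}=-b$) and the converse in more detail than the paper does. The odd-$B$ case you flagged as the main obstacle is in fact vacuous: $q$ is odd, so $2m-1+q=2m+(q-1)$ is even, and $p^r$ is odd, so $B$ is even. This is why the paper writes $b^2y^{2m-1+q}=\bigl(by^{m+\frac{q-1}{2}}\bigr)^2$ and factors before extracting $p$-powers, and factoring first also makes the relation linear, so the argument of Proposition~\ref{Prop2.1'} applies verbatim. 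Note too that classicality w.r.t.\ $\Sigma_1$ comes from the criterion quoted in the proof of Proposition~\ref{Prop2.4}(d) (nonclassical iff $p\mid(n-1)$ and $p\mid(m-1)$), not from Theorem~\ref{Theoretas1}, which concerns Frobenius nonclassicality.
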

\begin{proof}
    Here by Propositions \ref{Prop2.5} and \ref{Prop2.6} (b), the  $\F$-Frobenius nonclassicality of $\f$ is equivalent to
\begin{equation}\label{E2.22}
   b^2y^{2m-1+q} - a^2x^{2n-2+2q} + 2a x^{n-1+q} - 1 = 0 \Longleftrightarrow (by^{m+\frac{q-1}{2}})^2-(ax^{n+q-1}-1)^2=0,
\end{equation}
which in turn is equivalent to 
\begin{equation}\label{E2.22-2}
   b^{1/p^r}y^{\frac{2m+q-1}{2p^r}} \pm (a^{1/p^r}x^\frac{n+q-1}{p^r}-1)=0
\end{equation}
for some $r>0$. In any case, the left side of \eqref{E2.22-2} is irreducible, and we immediately conclude that $2m=n=\frac{q-1}{p^r-1}$. Now, when $b^{1/p^r}y^{\frac{2m+q-1}{2p^r}} +(a^{1/p^r}x^\frac{n+q-1}{p^r}-1)$ is a factor of $ax^n+by^m-1$, then $a,b \in \mathbb{F}_{p^r}$. If $b^{1/p^r}y^{\frac{2m+q-1}{2p^r}} -(a^{1/p^r}x^\frac{n+q-1}{p^r}-1)$ is a factor of $ax^n+by^m-1$, then $a \in \mathbb{F}_{p^r}$ and $b \in \mathbb{F}_{p^{2r}}$ with $b^{p^r}=-b$ (in particular, $2r \mid h$). The converse is straightforward. 
\end{proof}

\begin{proposition}\label{Prop2.5'}
Let $p>5$ and suppose that $p \mid (n-1)$ and $p \mid (m-2)$. Then the curve $\f$ defined over $\F$ is $\F$-Frobenius classical w.r.t. $\Sigma_2$.
\end{proposition}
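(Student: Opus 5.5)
We argue as in Propositions~\ref{Prop2.8'} and \ref{Prop2.9'}: reduce Frobenius nonclassicality to the vanishing of an explicit function in $\F(\f)$, and then compare exponents.

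First we check that the hypotheses of Propositions~\ref{Prop2.5} and \ref{Prop2.6} hold. Since $p\mid (m-2)$ and $p>5$, we have $p\nmid(m-1)$. By \cite[Theorem 1]{Gar}, $\f$ is therefore classical w.r.t. $\Sigma_1$. By Theorem~\ref{Teo2.3}(e) it is nonclassical w.r.t. $\Sigma_2$. Proposition~\ref{Prop2.5} then gives the osculating conic at a general point $P=(u:v:1)$:
\[
\mathcal{H}_P:\ au^{n-1}XZ+bv^{m-2}Y^2-Z^2=0 .
\]
By Proposition~\ref{Prop2.6}(b), $\f$ is $\F$-Frobenius nonclassical if and only if $\Phi_q(P)=(u^q:v^q:1)\in\mathcal{H}_P$ for infinitely many $P$. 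This is equivalent to the vanishing in $\F(\f)$ of
\[
ax^{n-1+q}+by^{m-2+2q}-1 .
\]

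Next we extract $p$-th powers. Let $p^r$ and $p^s$ be the exact powers of $p$ dividing $n-1+q$ and $m-2+2q$; both $r,s\geq 1$ because $p\mid n-1$ and $p\mid m-2$. If $r<s$, taking $p^r$-th roots gives $a^{1/p^r}x^{\alpha}+b^{1/p^r}y^{p^{s-r}\beta}=1$ with $p\nmid\alpha$. Then $x$ would be algebraic over $\overline{\mathbb{F}}_q(y^p)$ inseparably, so $y$ would not be separating, a contradiction. The case $s<r$ is symmetric, so $r=s$, exactly as in the proof of Proposition~\ref{Prop2.1'}. Taking $p^r$-th roots, the vanishing becomes
\[
a^{1/p^r}x^{\frac{n-1+q}{p^r}}+b^{1/p^r}y^{\frac{m-2+2q}{p^r}}-1=0 .
\]
Both exponents are prime to $p$, so the left side is an absolutely irreducible polynomial (a generalized Fermat polynomial). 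It must therefore be a scalar multiple of $ax^n+by^m-1$. Comparing exponents gives
\[
n-1+q=np^r,\qquad m-2+2q=mp^r,
\]
that is, $n(p^r-1)=q-1$ and $m(p^r-1)=2(q-1)$. Hence $m=2n>n$, contradicting the standing assumption $n\geq m$. So the function never vanishes, and $\f$ is $\F$-Frobenius classical.

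The main obstacle is the irreducibility-and-divisibility step: we need that a vanishing polynomial relation between $x$ and $y$ must be a multiple of the defining polynomial. That multiple can only be a constant multiple because the reduced polynomial is absolutely irreducible, and we also need to rule out $r\neq s$. If a fully self-contained argument is wanted, we can substitute $x=0$ (respectively $y=0$), as in Proposition~\ref{Prop2.3'}, to get the divisibility conditions $m\mid m-2+2q$ and $n\mid n-1+q$ directly. We then compare degrees after eliminating $y^m=(1-ax^n)/b$, which again forces $m=2n$.
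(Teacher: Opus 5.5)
Your argument is correct and is essentially the paper's proof: reduce to the vanishing of $ax^{n-1+q}+by^{m-2+2q}-1$, then run the $p$-power extraction and irreducibility argument of Proposition~\ref{Prop2.1'} to force $m=2n$, contradicting $n\geq m$. One slip to fix: when $r<s$ the relation gives $x^{\alpha}\in\overline{\mathbb{F}}_q(\f)^p$, hence $x\in\overline{\mathbb{F}}_q(\f)^p$ since $p\nmid\alpha$ (equivalently $dx=0$), so it is $x$, not $y$, that fails to be separating (and $x$ is in fact separable over $\overline{\mathbb{F}}_q(y^p)$, since $T^{\alpha}-x^{\alpha}$ is a separable polynomial); the contradiction still stands because both $x$ and $y$ are separating.
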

\begin{proof}
As in the previous cases, Propositions \ref{Prop2.5} and \ref{Prop2.6} (b) give that $\f$ is $\F$-Frobenius nonclassical if and only if
\begin{equation}\label{E2.C}
   a x^{n-1+q} + b y^{m-2+2q} - 1 = 0.
\end{equation}

Using an argument analogous to that in Proposition \ref{Prop2.1'}, we conclude that $m = 2n$, and since we are assuming that $n \geq m$, this leads to a contradiction.

\end{proof}

\begin{proposition}\label{Prop2.6'}
Let $p>5$ and suppose that $p \mid (n-1)$ and $p \mid (m+1)$. Then the curve $\f$ defined over $\F$ is $\F$-Frobenius classical w.r.t. $\Sigma_2$.
\end{proposition}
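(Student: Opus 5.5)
We follow the pattern of Propositions \ref{Prop2.2'} and \ref{Prop2.10'}. Under the hypotheses $p \mid (n-1)$ and $p\mid(m+1)$, $\f$ is classical w.r.t. $\Sigma_1$ by Theorem \ref{Theoretas1}, since $m\ne n$ (indeed $p\mid n-1$ and $p \mid m+1$ with $m=n$ would force $p\mid 2$). It is nonclassical w.r.t. $\Sigma_2$ by Proposition \ref{Prop2.4}(i). Hence Proposition \ref{Prop2.5} gives the osculating conic at a general point $P=(u:v:1)$:
\[
\mathcal{H}_P:\ a u^{n-1} XY + b v^{m+1} Z^2 - YZ=0.
\]
Proposition \ref{Prop2.6}(b) then says that $\f$ is $\F$-Frobenius nonclassical if and only if $\Phi_q(P)=(u^q:v^q:1)\in\mathcal{H}_P$ for infinitely many $P$. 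This is equivalent to the vanishing in $\F(\f)$ of
\[
a x^{n-1+q}y^q + b y^{m+1} - y^q .
\]
We argue by contradiction, assuming this function vanishes.

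\textbf{Case $m+1<q$.} Divide by $y^{m+1}$ to get $a x^{n-1+q}y^{q-m-1}+b-y^{q-m-1}=0$ in $\F(\f)$. The polynomial on the left is then divisible by $ax^n+by^m-1$, say equal to $(ax^n+by^m-1)h(x,y)$ with $h\neq 0$. Set $x=0$: this gives $b-y^{q-m-1}=(by^m-1)h(0,y)$. Since $q-m-1$ need not relate to $m$, we compare roots instead of pure degrees. Every root of $by^m-1$ is then a root of $y^{q-m-1}-b$. Alternatively, set $y=0$: the left side becomes the constant $b\neq 0$, while the right side is $(ax^n-1)h(x,0)$. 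This forces $h(x,0)=0$ and hence $b=0$, a contradiction. The $y=0$ substitution is the clean route, mirroring the $x=0$ trick in Proposition \ref{Prop2.2'}.

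\textbf{Case $m+1\ge q$.} Divide by $y^q$ to get $a x^{n-1+q} + b y^{m+1-q} - 1=0$ in $\F(\f)$. Write $n-1+q=p^r\alpha$ with $p\nmid\alpha$. Since $p\mid m+1$, set $m+1-q=p^s\beta$ with $p\nmid\beta$; if $m+1=q$ this reads $ax^{n-1+q}+b-1=0$, which makes $x$ constant, a contradiction. Both $x$ and $y$ are separating, so exactly as in Proposition \ref{Prop2.1'} we get $r=s$. Taking $p^r$-th roots yields $a^{1/p^r}x^\alpha+b^{1/p^r}y^\beta-1=0$. This polynomial is absolutely irreducible, so it is a scalar multiple of $ax^n+by^m-1$, forcing $\alpha=n$ and $\beta=m$. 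Then $n-1+q=p^rn$ and $m+1-q=p^rm$. The second equation gives $m(p^r-1)=1-q<0$, impossible for $r>0$; for $r=0$ it gives $q=1$, also impossible. This contradiction shows $\f$ is $\F$-Frobenius classical.

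The main obstacle is the bookkeeping in the second case. We must justify that the $p$-adic valuations of $n-1+q$ and $m+1-q$ coincide, and we must treat the degenerate subcase $m+1=q$ separately. The remaining steps are direct adaptations of the arguments in Propositions \ref{Prop2.1'} and \ref{Prop2.2'}.
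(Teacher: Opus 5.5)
Your proof is correct and is essentially the paper's argument. It reduces to the vanishing of $ax^{n-1+q}y^q+by^{m+1}-y^q$ and follows Proposition \ref{Prop2.10'}, ruling out $m+1<q$ by setting $y=0$; for $m+1\ge q$ the paper simply notes that $ax^{n-1+q}+by^{m+1-q}=1$ has degree $m+1-q<m$ in $y$, which is shorter than your valuation-matching route and also covers the subcase $m+1=q$.

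One slip to fix: $\Sigma_1$-classicality does not follow from Theorem \ref{Theoretas1}, which only concerns $\F$-Frobenius nonclassicality; it follows from \cite[Theorem 1]{Gar}, since $p\nmid m-1$ here. Also drop the inconclusive $x=0$ aside.
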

\begin{proof}
    By Propositions \ref{Prop2.5} and \ref{Prop2.6} (b), $\f$ is $\F$-Frobenius nonclassical w.r.t. $\Sigma_2$ if and only if
\begin{equation}\label{E2.A}
   a x^{n-1+q} y^q + b y^{m+1} - y^q = 0.
\end{equation}
The proof is then analogous to the one of Proposition \ref{Prop2.10'}.
\end{proof}

Propositions \ref{Prop2.1'} to \ref{Prop2.6'} immediately imply the main result of this section.

\begin{theorem}\label{Teo2.4}
Suppose that $p>5$ and $q=p^h$. If the curve $\f$ defined over $\F$ is classical w.r.t. $\Sigma_1$, then $\f$ is $\F$-Frobenius nonclassical w.r.t. $\Sigma_2$ if and only if one of the following conditions holds:
\begin{itemize}
    \item [(a)] $p \mid (n-2)$ and $m=n=\frac{2(q-1)}{p^r-1}$ with $r<h$ such that $r \mid h$ and $a,b \in \mathbb{F}_{p^r}$;
    \item [(b)] $p \mid (2n-1)$ and $m=n=\frac{q-1}{2(p^r-1)}$ with $r<h$ such that $r \mid h$ and $a^2,b^2 \in \mathbb{F}_{p^r}$;
    \item [(c)] $p \mid (n+1)$ and $q=n+1=m+1$ and $a+b=1$;
    \item [(d)] $p \mid (n-1)$ and $n=2m=\frac{q-1}{p^r-1}$ with $r<h$ such that $r \mid h$ and either $a,b \in \mathbb{F}_{p^r}$ or $2r \mid h$, $a \in \mathbb{F}_{p^r}$ and $b \in \mathbb{F}_{p^{2r}}$ with $b^{p^r}=-b$;
    \item [(e)] $p \mid (n-2)$ and $n=2m=\frac{2(q-1)}{p^r-1}$ with $r<h$ such that $r \mid h$ and $a,b \in \mathbb{F}_{p^r}$.
\end{itemize}
\end{theorem}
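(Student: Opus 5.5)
The plan is to assemble Theorem~\ref{Teo2.4} from the case analysis already done; the work is mostly bookkeeping. By Proposition~\ref{Prop1.4}, since $p>5=M$, an $\F$-Frobenius nonclassical curve w.r.t. $\Sigma_2$ is nonclassical w.r.t. $\Sigma_2$. So Theorem~\ref{Teo2.3} restricts $(n,m)$ to its cases (a)--(e).

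Next I use the hypothesis that $\f$ is classical w.r.t. $\Sigma_1$. By \cite[Theorem 1]{Gar}, this excludes the case $p\mid(n-1)$ and $p\mid(m-1)$, which is the only situation handled by Proposition~\ref{Prop2.7'}. The remaining pairs are exactly those of Proposition~\ref{Prop2.4}(a)--(c),(e)--(j). For each pair I quote the matching result:
\begin{itemize}
\item $(n-2,m-2)$ gives Proposition~\ref{Prop2.1'}, which is item (a).
\item $(n+1,m+1)$ gives Proposition~\ref{Prop2.2'}, which is item (c); the condition $q=n+1$ with $m=n$ is the same as $q=n+1=m+1$.
\item $(2n-1,2m-1)$ gives Proposition~\ref{Prop2.3'}, which is item (b).
\item $(n-2,m-1)$ gives Proposition~\ref{Prop2.8'}, which is item (e). Here $n-2+2q=np^r$ yields $n=2(q-1)/(p^r-1)$, and $m-1+q=mp^r$ yields $m=(q-1)/(p^r-1)$, so $n=2m$. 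Also $a=a^{1/p^r}$ gives $a\in\mathbb{F}_{p^r}$, and likewise for $b$. Since $a\in\F$, the requirement $r\mid h$ holds after replacing $r$ by $\gcd(r,h)$, or directly because $p^r-1\mid q-1$ forces $r\mid h$.
\item $(n-1,2m-1)$ gives Proposition~\ref{Prop2.4'}, which is item (d).
\item The pairs $(2n-1,m-1)$, $(n+1,m-1)$, $(n-1,m-2)$ and $(n-1,m+1)$ are $\F$-Frobenius classical by Propositions~\ref{Prop2.9'}, \ref{Prop2.10'}, \ref{Prop2.5'} and \ref{Prop2.6'}.
\end{itemize}

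The point needing care is the standing assumption $n\ge m$. Propositions~\ref{Prop2.9'} and \ref{Prop2.5'} use it to rule out $2n=m$ and $m=2n$. The mirror configurations are $p\mid(n-1)$ with $p\mid(2m-1)$, and $p\mid(n-2)$ with $p\mid(m-1)$. These are exactly Propositions~\ref{Prop2.4'} and \ref{Prop2.8'}, where $n=2m$ is compatible with $n\ge m$. So every ordered pair of divisibility conditions is covered once.

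The main obstacle I expect is purely logical: checking that the case hypotheses of the theorem's items match the nonclassical cases without overlap. For example, if $p\mid(n-2)$ and $n=m$, then $p\mid(m-2)$, so item (a) is consistent. In item (e), $p\mid(n-2)$ together with $n=2m$ forces $2m\equiv 2\pmod p$, hence $p\mid(m-1)$, so this is the $(n-2,m-1)$ case. In item (d), $n=2m$ and $p\mid(n-1)$ give $p\mid(2m-1)$.

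For sufficiency, given the conditions of each item, I verify that the curve satisfies the hypotheses of the corresponding proposition and apply its ``if'' direction. In each case the divisibility in the other variable follows from these same relations. With those checks done, the theorem follows.
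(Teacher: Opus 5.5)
Your proposal is correct and follows the paper's own route: the paper restricts to the $\Sigma_2$-nonclassical cases via Proposition~\ref{Prop1.4} and Theorem~\ref{Teo2.3}, excludes $p\mid(n-1),\,p\mid(m-1)$ by $\Sigma_1$-classicality, and reads the theorem off Propositions~\ref{Prop2.1'}--\ref{Prop2.6'}, and your bookkeeping of the assumption $n\ge m$ and of the mirror cases is exactly what that assembly needs. One small correction: in the $(n-2,m-1)$ case, the valid reason for $r\mid h$ is that $m(p^r-1)=q-1$ forces $p^r-1\mid p^h-1$; ``replacing $r$ by $\gcd(r,h)$'' is not legitimate, because it would change the value of $n=2(q-1)/(p^r-1)$.
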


\subsection{The number of rational points}
As pointed out in the Introduction,  on the one hand, the classification of Frobenius nonclassical curves allows us to establish a nice upper bound for the number of rational points in the classical cases. On the other hand, the Frobenius nonclassical cases tend to have many rational points. In this section, the following result will be obtained.

\begin{theorem}\label{pr}
Let $\f: aX^n + bY^mZ^{n-m} = Z^n$ be a projective curve defined over $\F$, where $a,b \in \F^*$ and $m,n$ are positive integers.  
Assume that $p > 5$ and that $\f$ is classical w.r.t. $\Sigma_1$. Then one of the following cases occurs:
    \begin{itemize}

        \item[(i)] If $n = m = \frac{q - 1}{2(p^r - 1)}$ with $r < h$, $r \mid h$, and $a^2, b^2 \in \mathbb{F}_{p^r}$, then 
        $$N_q(\f) = n^2(p^r - 2) + 3n;$$
        
        \item[(ii)] If $n=m=\frac{2(q-1)}{p^r-1}$ with $r<h$ such that $r \mid h$, and $a,b \in \mathbb{F}_{p^r}$, then 
\[
   N_q(\f) = \frac{n^2}{4} \big(p^r + 1 - 2(\psi(a)+\psi(b)+\psi(-ab))\big) + n(\psi(a)+\psi(b)+\psi(-ab))
\]    
 where $\psi(\alpha)=1$ if $\alpha$ is a square in $\mathbb{F}_{p^r}$, and $\psi(\alpha)=0$ otherwise

        \item[(iii)] If $q = n + 1 = m + 1$ and $a + b = 1$, then 
        $$N_q(\f) = (q - 1)^2;$$   
        
        \item[(iv)] If $n=2m=\frac{q-1}{p^r-1}$ with $r<h$ such that $r \mid h$ and either $a,b \in \mathbb{F}_{p^r}$ or $2r \mid h$, $a \in \mathbb{F}_{p^r}$ and $b \in \mathbb{F}_{p^{2r}}$ with $b^{p^r}=-b$, then 
        $$N_q(\f) = \frac{n^2}{2}(p^r - 2) + 2n;$$ 
        
        \item[(v)] If $n = 2m = \frac{2(q - 1)}{p^r - 1}$ with $r < h$, $r \mid h$, and $a,b \in \mathbb{F}_{p^r}$, then 
        $$
        N_q(\f) =
        \begin{cases}
            m^2(p^r - 3) + 4m, & \text{if $a$ is a square in $\mathbb{F}_{p^r}$}\\
            m^2(p^r - 1) + 2m, & \text{if $a$ is not a square in $\mathbb{F}_{p^r}$}
        \end{cases};
        $$
        
        \item[(vi)] In all remaining cases, an upper bound for $N_q(\f)$ is given by:
        \begin{eqnarray}\label{cotaSV}
    N_q(\f) &\leq &\frac{10(mn - m - n - \gcd(m, n)) + (q + 5)2n}{5}
    - \frac{\alpha(4m - 11) + (n - \alpha)(2m - 6)  }{5} \nonumber \\
    & - & \frac{\beta(4n - 11)+(m - \beta)(2n - 6)}{5}. 
\end{eqnarray}
              where $\alpha$ and $\beta$ denote the number of roots in $\F$ of the polynomials $T^n - a^{-1}$ and $T^m - b^{-1}$, respectively.   
    \end{itemize}
\end{theorem}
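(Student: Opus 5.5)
The plan splits along the dichotomy of Theorem~\ref{Teo2.4}. Items (i)--(v) are exactly the $\F$-Frobenius nonclassical cases listed there. For these we count points directly, using the special relation between $q$ and the exponents. Item (vi) is the Frobenius classical case, where we apply the refined St\"ohr--Voloch bound \eqref{Eq4} with $\nu_i=i$.

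\textbf{Cases (i)--(v).} The key observation is that when $n=\frac{c(q-1)}{p^r-1}$ with $c\in\{1,2,\tfrac12\}$, the map $x\mapsto x^n$ sends $\F^*$ into a small subgroup of $\mathbb{F}_{p^r}^*$ (or into squares/non-squares). Concretely, for $u\in\F^*$ we have $u^{(q-1)/(p^r-1)}=N(u)\in\mathbb{F}_{p^r}^*$, where $N$ is the norm from $\F$ to $\mathbb{F}_{p^r}$, which is surjective with fibres of size $\frac{q-1}{p^r-1}$.

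We first count affine points with $uv\neq 0$. Write $A=u^n$ and $B=v^m$. For instance:
\begin{itemize}
\item In case (iv), $A$ ranges over $\mathbb{F}_{p^r}^*$ with fibres of size $n$, and $B=v^{m}$ ranges over a coset (the $b^{p^r}=-b$ subcase) with fibres of size $m$.
\item In case (ii), $u^n=N(u)^2$ ranges over the nonzero squares of $\mathbb{F}_{p^r}$, each attained $n$ times.
\end{itemize}
The count then reduces to counting solutions of $aA+bB=1$ in $\mathbb{F}_{p^r}$ with $A,B$ restricted to squares or to all of $\mathbb{F}_{p^r}^*$. For squares this is the classical count of points on the conic $aS^2+bT^2=1$ over $\mathbb{F}_{p^r}$, which is $p^r+1$ minus points at infinity. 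Subtracting the solutions with $ST=0$ produces the terms $\psi(a),\psi(b),\psi(-ab)$. We then add the points with $uv=0$: there are $n$ or $0$ points $(\rho,0)$ according to whether $a^{-1}$ is an $n$-th power in $\F$, and similarly for $(0,\xi)$. Finally we add the points at infinity on the nonsingular model, namely the branches over $(1:0:0)$ or $(0:1:0)$. When $n=m$ these are the $n$ points $(1:\eta:0)$ with $a+b\eta^n=0$, rational exactly when $-a/b$ is a suitable power, which gives the $\psi(-ab)$ term. When $n=2m$ there are $\gcd(m,n)=m$ branches centred at $(0:1:0)$.

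Case (iii) is immediate. Here $n=q-1$, so every $u\in\F^*$ has $u^n=1$, and the affine points with $uv\neq0$ require $a+b=1$, which holds. This gives $(q-1)^2$ points, and there are no further points since $a,b\neq1$ and $-a/b$ is not a $(q-1)$-th power issue only if $a+b=1$ forces $-a/b\neq1$.

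I expect the careful bookkeeping in these cases (which points at infinity and on the axes are rational, and branch counting at the singular point when $n=2m$) to be the main obstacle.

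\textbf{Case (vi).} By Theorem~\ref{Teo2.4} the curve is $\F$-Frobenius classical, so $\nu_i=i$, and \eqref{Eq4} with $M=5$, $s=2$ and $\deg=n$ gives
\[
N_q(\f)\le \frac{10(2g-2)/2\cdot\ldots}{5}.
\]
Here $\nu_1+\dots+\nu_4=10$. We use the genus formula $2g-2=mn-m-n-\gcd(m,n)$ for this Kummer-type curve, obtained via Riemann--Hurwitz for $x$. This yields the first term $\frac{10(mn-m-n-\gcd(m,n))+(q+5)2n}{5}$. We then compute $A(Q)$ at the points $P_\rho=(\rho,0)$ and $P_\xi=(0,\xi)$ using the order sequences $(0,1,2,m,m+1,2m)$ and $(0,1,2,n,n+1,2n)$ from the proof of Proposition~\ref{Prop2.2}:
\begin{itemize}
\item If the point is rational, $A=\sum_{i=1}^5(j_i-\nu_{i-1})-5=(1+1+m-2+m-2+2m-4)-5=4m-11$.
\item If the point is not rational, $A=\sum_{i=0}^4(j_i-i)=(m-3)+(m-3)=2m-6$.
\end{itemize}
There are $\alpha$ rational points of the first kind, giving $\alpha(4m-11)+(n-\alpha)(2m-6)$, and symmetrically $\beta(4n-11)+(m-\beta)(2n-6)$. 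Since $A(Q)\ge0$ at every other point, dropping those contributions yields \eqref{cotaSV}.
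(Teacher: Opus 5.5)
Your route is the paper's. Theorem~\ref{Teo2.4} separates the Frobenius nonclassical cases (i)--(v), which you count through the norm map. In (vi) you insert $\nu_i=i$, $2g-2=mn-m-n-\gcd(m,n)$ and the same values of $A(P_\rho)$, $A(P_\xi)$ into \eqref{Eq4}. Your (iii) and (vi) are fine, and your conic count for (ii) is correct; the paper does not prove (i) or (ii) and simply quotes them from \cite{GV1}.

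Two steps are missing, however. First, your key observation fails for $n=\frac{q-1}{2(p^r-1)}$. Since $(u^n)^2=N(u)$, the values $u^n$ fill, with fibres of size $n$, the subgroup $H=\{w\in\F^*: w^{p^r}=\pm w\}$ of order $2(p^r-1)$, and $H$ is not contained in $\mathbb{F}_{p^r}^*$. Moreover, (i) only assumes $a^2,b^2\in\mathbb{F}_{p^r}$, i.e.\ $a,b\in H$. So the reduction to $aA+bB=1$ over $\mathbb{F}_{p^r}$ does not apply, and case (i) is never actually done. The fix is as follows. Put $A'=au^n$, $B'=bv^n\in H$ and solve $A'+B'=1$ in $H$. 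If $A'^{p^r}=-A'$, then $(1-A')^{p^r}=1+A'\neq\pm(1-A')$, because $p\neq 2$ and $A'\neq 0$. Hence exactly the $p^r-2$ solutions with $A'\in\mathbb{F}_{p^r}\setminus\{0,1\}$ remain, giving $n^2(p^r-2)$ points with $uv\neq 0$. Since $a^{-1},b^{-1},-a/b\in H$, there are $n$ rational points on each axis and $n$ at infinity, which gives $n^2(p^r-2)+3n$.

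Second, in (iv) and (v) both formulas require all $m=n/2$ branches centred at $(0:1:0)$ to be $\F$-rational, and you only list this as the expected obstacle. In the chart $Y=1$ the curve is $aX^{2m}+bZ^m-Z^{2m}=0$. Writing $Z=X^2W$ gives $a+bW^m-X^{2m}W^{2m}=0$, so the branches are $Z=\lambda X^2+\cdots$ with $\lambda^m=-a/b$. These roots are simple because $p\nmid m$, so by Hensel's lemma a branch is defined over $\F$ exactly when $\lambda\in\F$. The $m$-th powers of $\F^*$ form the subgroup of order $(q-1)/m$: this is $H$ in case (iv) and $\mathbb{F}_{p^r}^*$ in case (v). The element $-a/b$ lies in that subgroup in every subcase: it is in $\mathbb{F}_{p^r}^*$ when $a,b\in\mathbb{F}_{p^r}$, and it satisfies $(-a/b)^{p^r}=a/b$ when $b^{p^r}=-b$. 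Since $m\mid q-1$, all $m$ values of $\lambda$ then lie in $\F$. This supplies the contribution $+m$ from infinity in both (iv) and (v); without it your count only gives the affine rational points.
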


The cases (i) and (ii) of Theorem \ref{pr} can be found in \cite[Section 2]{GV1}. In what follows, we proceed to prove the remaining items.

\begin{proposition}
    
\label{Teo2.8} 
Let $q = n+1 = m+1$ and $a+b=1$. Then
\[
   N_q(\f) = (q-1)^2.
\]   
\end{proposition}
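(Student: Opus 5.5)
We count points directly on the smooth projective curve $\f: aX^n+bY^n=Z^n$ with $n=q-1$ and $a,b\in\F^*$, $a+b=1$. Since $p\nmid n$ (indeed $n\equiv -1 \pmod p$), the curve is nonsingular, so $N_q(\f)$ equals the number of $\F$-rational points of the projective model. The plan is to split these points into affine points with $xy\neq 0$, affine points on the axes, and points at infinity, and count each group using that $c^{q-1}=1$ for every $c\in\F^*$.

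\textbf{Affine points with $xy\neq0$.} For $x,y\in\F^*$ we have $x^n=y^n=1$. The equation then becomes $a+b=1$, which holds by hypothesis. So every pair in $\F^*\times\F^*$ is a point of $\f$, which gives $(q-1)^2$ points.

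\textbf{Points on the axes.} If $x=0$, the equation reduces to $by^n=1$. For $y\in\F^*$ this reads $b=1$, hence $a=0$, which is excluded. The value $y=0$ gives $0=1$. So there are no such points, and by symmetry none with $y=0$ either.

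\textbf{Points at infinity.} Setting $Z=0$ gives $aX^n+bY^n=0$. If $Y=0$ then $X=0$, so we may take $Y=1$, and an $\F$-rational point needs $X\in\F$ with $aX^n=-b$. The value $X=0$ would force $b=0$, and $X\neq0$ gives $a=-b$, which contradicts $a+b=1$. So there are no rational points at infinity.

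Adding the three contributions gives $N_q(\f)=(q-1)^2$. No step is a real obstacle; the only care needed is to confirm nonsingularity, so that the projective count agrees with the count on the nonsingular model, and to handle the points at infinity for the homogenization $aX^n+bY^mZ^{n-m}$, which is $aX^n+bY^n$ since $m=n$.
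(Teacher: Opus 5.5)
Your proposal is correct and takes essentially the same route as the paper: both count the $(q-1)^2$ affine points with nonzero coordinates using $c^{q-1}=1$, and both rule out rational points with a vanishing coordinate. You also explicitly check nonsingularity and that there are no rational points at infinity. The paper's phrase ``no points with a zero coordinate'' covers these only implicitly, so your write-up is, if anything, more complete.
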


\begin{proof} 
Consider the Fermat curve $\f: a x^{q-1} + (1-a) y^{q-1} = 1$. Note that this curve has no points with a zero coordinate. Thus, if $\alpha,\beta \in \F$ are both nonzero, then
\[
   a \alpha^{q-1} + (1-a) \beta^{q-1} = a + (1-a) = 1.
\]
Since there are $q-1$ elements in $\mathbb{F}_q^*$, the result follows.
\end{proof}

\begin{proposition}
    
\label{Teo2.9}
Let $n=2m=\frac{q-1}{p^r-1}$ with $r<h$ such that $r \mid h$ and either $a,b \in \mathbb{F}_{p^r}$ or $2r \mid h$, $a \in \mathbb{F}_{p^r}$ and $b \in \mathbb{F}_{p^{2r}}$ with $b^{p^r}=-b$.  
Then
\[
N_q(\f) = \frac{n^2}{2}(p^r - 2) + 2n.
\]
\end{proposition}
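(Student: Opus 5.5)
The plan is to count the $\F$-rational points of the nonsingular model directly, splitting them into the affine points of $ax^n+by^m=1$ and the branches centred at the points at infinity of $aX^n+bY^mZ^{n-m}=Z^n$. Throughout, $n=2m=\frac{q-1}{p^r-1}$. The key observation is about the images of the power maps on $\mathbb{F}_q^*$, which is cyclic of order $q-1$:
\begin{itemize}
\item $x\mapsto x^n$ maps onto the subgroup of order $p^r-1$, namely $\mathbb{F}_{p^r}^*$, and each value has exactly $n$ preimages;
\item $y\mapsto y^m$ maps onto the subgroup $G=\{c: c^{2(p^r-1)}=1\}$ of order $2(p^r-1)$, and each value has exactly $m$ preimages.
\end{itemize}
Note that $\mathbb{F}_{p^r}^*\subset G$. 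In both admissible cases $b\in G$. If $b\in\mathbb{F}_{p^r}$ this is clear. If $b^{p^r}=-b$, then $b^{p^r-1}=-1$, so $b^{2(p^r-1)}=1$.

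\textbf{Affine points.} I will split according to which coordinates vanish.
\begin{itemize}
\item $x=0$: we need $y^m=b^{-1}$. Since $b^{-1}\in G$, this gives exactly $m$ points.
\item $y=0$: we need $x^n=a^{-1}\in\mathbb{F}_{p^r}^*$, which gives $n$ points.
\item $xy\neq0$: put $u=x^n\in\mathbb{F}_{p^r}^*$ and $w=y^m\in G$, so $au+bw=1$. For each $u\in\mathbb{F}_{p^r}^*$ with $u\neq a^{-1}$ (there are $p^r-2$ such $u$), we get $w=(1-au)/b\neq0$.
\end{itemize}
In the last case I check that $w\in G$. If $b\in\mathbb{F}_{p^r}$, then $w\in\mathbb{F}_{p^r}^*\subset G$. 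If $b^{p^r}=-b$, then $w^{p^r-1}=-1$, so again $w\in G$. Conversely, $u=a^{-1}$ would force $w=0$. Hence this case contributes $(p^r-2)nm=\frac{n^2}{2}(p^r-2)$ points.

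The affine points are nonsingular, since $p\nmid nm$. So the affine part contributes $\frac{n^2}{2}(p^r-2)+n+m$ points.

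\textbf{Points at infinity.} Setting $Z=0$ forces $X=0$, so the only point at infinity is $(0:1:0)$. Since $n>m$, this point is singular, and I must count the $\F$-rational branches centred there; I expect this to be the main point needing care. In the chart $Y=1$ the equation becomes
$$
aX^{2m}=Z^m(Z^m-b),
$$
that is,
$$
a\left(\frac{X^2}{Z}\right)^m=Z^m-b .
$$
Hence the branches through the origin are parametrised by the $m$ roots $c$ of $c^m=-b/a$. Each branch is a smooth branch of the form $X^2=cZ+\cdots$, so there are $m$ distinct branches. A branch is $\F$-rational exactly when its $c$ lies in $\F$. Now $-b/a\in G$, because $-1$ and $a$ lie in $\mathbb{F}_{p^r}^*\subset G$ and $b\in G$. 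Also $m\mid q-1$, and $G$ is precisely the image of the $m$-th power map. Therefore all $m$ roots $c$ lie in $\F$, and all $m$ branches are rational.

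I would justify the branch structure either by a Newton polygon argument or by the substitution $X=tZ$. This step is where I would double-check that the branches are indeed distinct and each is defined over $\F$.

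\textbf{Conclusion.} Adding the two contributions gives
$$
N_q(\f)=\frac{n^2}{2}(p^r-2)+n+2m=\frac{n^2}{2}(p^r-2)+2n,
$$
as claimed.
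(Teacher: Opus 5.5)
Your proposal is correct and follows essentially the paper's route: count the affine points through the fibres of $x\mapsto x^n$ and $y\mapsto y^m$, then add the $m$ linear $\mathbb{F}_q$-rational branches centred at $(0:1:0)$. The differences are minor: you skip the norm normalization $a=1$, you treat both cases at once through the subgroup $G$, and you make the rationality of the branches explicit via $c^m=-b/a\in G$, which is more transparent than the paper's appeal to a general theorem. The one slip is your suggested substitution $X=tZ$, which fails because every branch is tangent to $Z=0$. The right local substitution is $Z=uX^2$, coming from the Newton polygon edge $aX^{2m}+bZ^m$. It gives $u^m=-a/b$ and, by Hensel's lemma, exactly one smooth $\mathbb{F}_q$-rational branch per root, just as your identity $a(X^2/Z)^m=Z^m-b$ already indicates.
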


\begin{proof}
First assume that $a,b \in \mathbb{F}_{p^r}$. Consider the projective curve $\f: aX^{n} + bY^{\frac{n}{2}}Z^{\frac{n}{2}} = Z^{n}$. Its only singular point is $P = (0:1:0)$, which is the only point of $\f$ at the infinity. Since the norm map $\F \rightarrow \mathbb{F}_{p^r}$ is surjective, there exists $\alpha \in \F$ such that $a = \alpha^n$.  
Hence, the number of $\F$-rational points on $\f$ coincides with the number of $\F$-rational points on the projective curve 
\[
\mathcal{D}: X^n + bY^{\frac{n}{2}}Z^{\frac{n}{2}} = Z^n,
\]
also defined over $\F$.  
We start by counting the affine rational points $(u,v)$ of $\mathcal{D}$, which are all smooth.  
When $uv = 0$, the number of rational points equals
\[
n + \frac{n}{2} = \frac{3n}{2}.
\]
Now let $(u,v)$ be an $\F$-rational affine point of $\f$ such that $uv \neq 0$. Then
\[
u^n + b v^{\frac{n}{2}} = 1 \iff b v^{\frac{n}{2}} = 1 - u^n,
\]
and therefore $v^{\frac{n}{2}} \in \mathbb{F}_{p^r}$.  
In this case, there are
\[
n \cdot \frac{n}{2}(p^r - 2) = \frac{n^2}{2}(p^r - 2)
\]
rational points.  
Consequently, the number of rational non-singular points is
\[
\frac{n^2}{2}(p^r - 2) + \frac{3n}{2}.
\]

We now determine the number of $\F$-rational branches of $\mathcal{D}$ centered at $P$. Note that these branches correspond to the poles of the functions $x$ and $y$. From \cite[Proposition 6.3.1]{Sti}, these functions have precisely $n/2$ distinct poles. Thus there are precisely $n/2$ distinct branches centered at $P$.
De-homogenizing $\mathcal{D}$ with respect to the variable $Y$, we obtain
$
X^n + b Z^{\frac{n}{2}} - Z^n = 0.
$
The multiplicity of $P$ is $n/2$, and then all such branches are linear. From \cite[Theorem 8.10]{HKT}, we conclude that these branches are all defined over $\F$. Hence, the result follows.

Now suppose that $2r \mid h$, $a \in \mathbb{F}_{p^r}$ and $b \in \mathbb{F}_{p^{2r}}$ with $b^{p^r}=-b$. We claim that the polynomial $T^{n/2}-b$ always has roots in $\F$. Indeed, the condition $b^{p^r}=-b$ means that the order of $b$ in $\F^{*}$ divides $2(p^r-1)$. If $\delta$ is a generator of $\F^{*}$, we know that the subgroup of $\F^{*}$ containing all elements whose order divides $2(p^r-1)$ is $\langle \delta^{n/2}\rangle$. Hence $b \in \langle\delta^{n/2}\rangle$, proving the claim. In particular, if $(u,v)$ is an $\F$-rational affine point of $\f$ such that $uv \neq 0$, then
\[
u^n + b v^{\frac{n}{2}} = 1 \iff  v^{\frac{n}{2}} = \frac{1 - u^n}{b},
\]
where $\left(\frac{1 - u^n}{b}\right)^{p^r}=-\frac{1 - u^n}{b}$. The proof is then analogous to the one of the previous case.
\end{proof}

\begin{proposition}\label{Teo2.10}
Let $n = 2m = \frac{2(q-1)}{p^r - 1}$ with $r < h$, such that $r \mid h$, and let $a,b \in \mathbb{F}_{p^r}$. Then
\[
N_q(\f) =
\begin{cases}
    m^2(p^r - 3) + 4m, & \text{if $a$ is a square in $\mathbb{F}_{p^r}$}\\[3pt]
    m^2(p^r - 1) + 2m, & \text{if $a$ is not a square in $\mathbb{F}_{p^r}$}
\end{cases}.
\]
\end{proposition}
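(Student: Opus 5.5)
The plan is to count the $\F$-rational points of the nonsingular model directly: first the affine points, then the branches centred at the unique point at infinity. The key observation is that $m=\frac{q-1}{p^r-1}$, so for $u\in\F^*$ we have $u^m=N(u)$, where $N:\F^*\to\mathbb{F}_{p^r}^*$ is the norm. This map is surjective and each fibre has exactly $m$ elements. Consequently $u^{2m}=N(u)^2$, and the affine equation $au^{2m}+bv^m=1$ with $uv\neq 0$ becomes
\[
a s^2+b t=1,\qquad s=N(u),\ t=N(v)\in\mathbb{F}_{p^r}^*,
\]
which is an equation over $\mathbb{F}_{p^r}$.

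\textbf{Affine points with $uv\neq 0$.} Since $a,b\in\mathbb{F}_{p^r}$, for each $s\in\mathbb{F}_{p^r}^*$ the value $t=(1-as^2)/b$ is determined. It is admissible precisely when $t\neq 0$, that is, when $s^2\neq a^{-1}$. If $a$ is a square in $\mathbb{F}_{p^r}$, exactly two values of $s$ are excluded; otherwise none are. Each admissible pair $(s,t)$ lifts to $m\cdot m$ pairs $(u,v)$. This gives $m^2(p^r-3)$ points in the square case and $m^2(p^r-1)$ in the non-square case.

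\textbf{Affine points with $uv=0$.} For $u=0$ we need $v^m=b^{-1}\in\mathbb{F}_{p^r}^*$, which has $m$ solutions. For $v=0$ we need $N(u)^2=a^{-1}$. This has $2$ solutions $s$ when $a$ is a square and none otherwise, and each solution lifts to $m$ values of $u$. So these contribute $3m$ points (square case) or $m$ points (non-square case). All affine points are nonsingular since $p\nmid 2m$.

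\textbf{Points at infinity.} The only point of $\f$ at infinity is $P=(0:1:0)$, which is singular. Instead of arguing with multiplicities as in Proposition \ref{Teo2.9}, I would count rational places over $x=\infty$ in the Kummer extension $\F(x,y)/\F(x)$, where
\[
y^m=\frac{1-ax^{2m}}{b}.
\]
At the infinite place of $\F(x)$ the right-hand side has valuation $-2m$, which is divisible by $m$. By \cite[Proposition 6.3.1]{Sti} this place is unramified and has exactly $m$ places above it, namely the poles of $y$, i.e. the branches at $P$.

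For rationality, consider the function $w=y/x^2$. At these places $w^m=(x^{-2m}-a)/b$ has residue $-a/b\in\mathbb{F}_{p^r}^*$. The $m$ places correspond to the $m$ roots of $T^m=-a/b$. All these roots lie in $\F$, because every element of $\mathbb{F}_{p^r}^*$ is of the form $N(w)=w^m$ and the $m$-th roots of unity lie in $\F$ (as $m\mid q-1$). Hence all $m$ branches are $\F$-rational.

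Adding $m$ to the affine counts gives $m^2(p^r-3)+4m$ in the square case and $m^2(p^r-1)+2m$ in the non-square case. The step needing the most care is this identification of the rational branches at the singular point. The fallback is the argument of Proposition \ref{Teo2.9}: compute the multiplicity $m$ of $P$ in the chart $Y=1$, show that the $m$ branches are linear, and apply \cite[Theorem 8.10]{HKT}.
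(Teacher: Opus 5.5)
Your proof is correct, and its skeleton matches the paper's. Both count the affine points with $uv=0$ and with $uv\neq 0$, then count the $\mathbb{F}_q$-rational branches at the unique point $P=(0:1:0)$ at infinity. The differences lie in how two of these steps are justified.

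For the affine count, the paper first normalizes $b=1$ (and $a=1$ when $a$ is a square) using surjectivity of the norm. It then essentially asserts the counts $m^2(p^r-3)$ and $m^2(p^r-1)$. Your substitution $s=u^m$, $t=v^m$ turns the equation into $as^2+bt=1$ over $\mathbb{F}_{p^r}$ with fibres of size $m$. This makes those counts transparent and needs no normalization.

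For the branches at $P$, the paper takes the $m$ poles from Stichtenoth's Proposition 6.3.1. It notes that $P$ has multiplicity $m$, so the branches are linear, and cites Theorem 8.10 of Hirschfeld--Korchm\'aros--Torres for their rationality. Since all $m$ branches are tangent to $Z=0$, what separates them is second-order data: in the chart $Y=1$ they are $Z=\gamma X^2+\cdots$ with $\gamma^m=-a/b$. Your residue computation for $w=y/x^2$ isolates exactly this $\gamma$. Rationality then follows because $-a/b\in\mathbb{F}_{p^r}^*$ is a norm, hence an $m$-th power in $\mathbb{F}_q$.

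One point of attribution needs tidying. Proposition 6.3.1 only gives $e=1$ at the infinite place. The statement ``exactly $m$ places, all of degree one'' is what Kummer's theorem yields when applied to the minimal polynomial $T^m-(x^{-2m}-a)/b$ of $w$, whose reduction $T^m+a/b$ has $m$ distinct roots in $\mathbb{F}_q$. Phrased that way, your step is self-contained and the appeal to Proposition 6.3.1 becomes unnecessary.
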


\begin{proof}
Consider the projective curve $\f: aX^{2m} + bY^mZ^m = Z^{2m}$. Its only singular point is $P = (0:1:0)$, which is the only point of $\f$ at the infinity.

Assume first that $a$ is a square in $\mathbb{F}_{p^r}$.  
Since the norm map is surjective, there exist $\alpha, \beta \in \F$ such that $a = \alpha^{2m}$ and $b = \beta^m$.  
Hence, the number of $\F$-rational points on $\f$ equals the number of $\F$-rational points on the projective curve
\[
\mathcal{D}: X^{2m} + Y^mZ^m = Z^{2m},
\]
also defined over $\F$.

If $a$ is not a square in $\mathbb{F}_{p^r}$, let $b = \beta^m$ for some $\beta \in \F$; in this case, the number of $\F$-rational points on $\f$ equals the number of $\F$-rational points on
\[
\mathcal{D}: aX^{2m} + Y^mZ^m = Z^{2m},
\]
also defined over $\F$.
We proceed using the same reasoning as in the previous Proposition.

Consider first the non-singular points of $\mathcal{D}$ in both cases.  
The number of $\F$-rational points of $\mathcal{D}$ on the conic $XY = 0$  is $m + 2m = 3m$ if $a$ is a square in $\mathbb{F}_{p^r}$, and $m$ otherwise.

De-homogenizing with respect to $Z=1$, let $(u,v)$ with $uv \neq 0$ be an $\F$-rational point of $\f$.  
Then
\[
u^{2m} + v^m = 1, \quad \text{if $a$ is a square in $\mathbb{F}_{p^r}$,}
\]
and
\[
a u^{2m} + v^m = 1, \quad \text{if $a$ is not a square in $\mathbb{F}_{p^r}$.}
\]
In these cases, there are $m \cdot m(p^r - 3) = m^2(p^r - 3)$ rational points when $a$ is a square  in $\mathbb{F}_{p^r}$, and $(p^r - 1)m^2$ when $a$ is not a square.  
Therefore, the total number of affine rational points is
\[
N_1(\f) =
\begin{cases}
    m^2(p^r - 3) + 3m, & \text{if $a$ is a square  in $\mathbb{F}_{p^r}$}\\[3pt]
    m^2(p^r - 1) + m, & \text{if $a$ is not a square  in $\mathbb{F}_{p^r}$}
\end{cases}.
\]
Now, arguing exactly as in the proof of Proposition \ref{Teo2.9}, we conclude that in both cases there are $m$ $\F$-rational branches centered at $P$, giving the desired equalities.
\end{proof}

Finally, assume that $\f$ is $\F$-Frobenius classical w.r.t. $\Sigma_2$. Using the same notation as given in the Introduction and Proposition \ref{Prop2.2}, we have the following expressions:

\begin{equation*}
    A(P_{\xi}) =
    \begin{cases}
        4n - 11, &\text{if $P_{\xi} \in \f (\F)$}\\
        2n - 6, &\text{otherwise}
    \end{cases};
\end{equation*}

\begin{equation*}
    A(P_{\rho}) =
    \begin{cases}
        4m - 11, &\text{if $P_{\rho} \in \f (\F)$}\\
        2m - 6, &\text{otherwise}
    \end{cases}.
\end{equation*}

Let $\alpha$ and $\beta$ be the number of roots in $\F$ of the polynomials $T^n - a^{-1}$ and $T^m - b^{-1}$, respectively. According to \cite{AP}, the genus of the curve is given by 
$$
g = \frac{mn - m - n - \gcd(m, n) + 2}{2}.
$$
Then, \eqref{Eq4} in this case reads as \eqref{cotaSV}. The proof of Theorem \ref{pr} follows by gathering all the results of the section.

\section{Curve $\C: ax^n y^m + bx^n + cy^m = 1$}

We now consider the curve $\f: ax^n y^m + bx^n + cy^m = 1$ defined over $\F$, where $a,b,c \in \F$, $c \ne -\frac{a}{b}$ and $m,n$ are positive integers, both not divisible by $p$.  If any of the coefficients $a,b,c$ is zero, then one can easily check that $\f$ is $\F$-birationally equivalent to a curve of the type $dx^n+ey^m=1$. Hence, from now on we always assume $a,b,c$ to be non-zero. Further, we also assume without loss of generality $n \geq m$.  
 Since the classicality of the curve is a geometric property, for Proposition \ref{Prop3.1} and Subsection \ref{ss4.2}, we shall assume $b = c = 1$ (in particular, $a \ne -1$).
\subsection{Classicality and Frobenius classicality of $\f$ w.r.t. $\Sigma_1$}

\begin{proposition}\label{Prop3.1}
Assume $p>2$. For all positive integers $m,n$ with $p \nmid mn$, the curve $\f$ defined over $\F$ is classical w.r.t. $\Sigma_1$.
\end{proposition}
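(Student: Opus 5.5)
We show that a suitable point has $(\Sigma_1,P)$-order sequence $(0,1,j_2)$ with $p\nmid j_2(j_2-1)$; Proposition~\ref{Prop1.1} then gives classicality. For $s=1$ we have $M=2$, and the product in that proposition equals
$$\frac{(j_1-j_0)(j_2-j_0)(j_2-j_1)}{1\cdot 2\cdot 1}=\frac{j_2(j_2-1)}{2}$$
when $j_0=0$ and $j_1=1$. Since $p>2$, it suffices to find one point $P\in\f$ where $p\nmid j_2(j_2-1)$.

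The natural candidates are the points on the axes, as in the proof of Proposition~\ref{Prop2.2}. With $b=c=1$, the points with $x=0$ satisfy $y^m=1$, so they are $P_\xi=(0,\xi)$ with $\xi^m=1$. Write the curve as $y^m(ax^n+1)=1-x^n$. Near $P_\xi$ we can take $x$ as a local parameter, and then
$$y^m=\frac{1-x^n}{1+ax^n}=1-(1+a)x^n+O(x^{2n}).$$
Because $a\neq -1$ and $p\nmid m$, extracting an $m$-th root gives $y=\xi\bigl(1-\tfrac{1+a}{m}x^n+\cdots\bigr)$. So $P_\xi$ is a nonsingular point, its tangent line is $y=\xi$, and
$$I(P_\xi,\f\cap\{y=\xi\})=\operatorname{ord}_t(y-\xi)=n.$$
The $(\Sigma_1,P_\xi)$-order sequence is therefore $(0,1,n)$. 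Symmetrically, at $P_\rho=(\rho,0)$ with $\rho^n=1$ the sequence is $(0,1,m)$. Nonsingularity holds because $\partial f/\partial y=m y^{m-1}(ax^n+1)\neq 0$ at $P_\xi$ when $m\ge 2$; when $m=1$ the curve is rational in $y$ and the statement is simpler to handle directly.

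From $P_\xi$, classicality follows whenever $p\nmid n(n-1)$, that is, whenever $p\nmid n-1$, since $p\nmid n$ by hypothesis. From $P_\rho$ it follows whenever $p\nmid m-1$. The remaining and main difficult case is $p\mid n-1$ and $p\mid m-1$, where both axis points give nothing.

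For that case I would use the points at infinity or a general-point argument. The point $(1:0:0)$ is singular (the highest-degree part is $ax^ny^m$), so instead I would argue by contradiction. Suppose $\f$ is nonclassical w.r.t. $\Sigma_1$. Then the order sequence is $(0,1,p^r)$, and by the Garcia–Voloch criterion every tangent line meets $\f$ with multiplicity at least $p^r$. Equivalently, there is a $p$-linearization: in the function field one gets $y^{p^r}=\lambda^{p^r}x^{p^r}+\mu^{p^r}$-type relations, i.e.\ a relation $A(x,y)^{p^r}x+B(x,y)^{p^r}y+C(x,y)^{p^r}=0$ with the relevant degree bounds.

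Mimicking the trick of Proposition~\ref{Prop2.4}, write $n=kp^r+1$ and $m=lp^r+1$. The curve becomes
$$a(x^k y^l)^{p^r}xy+(x^k)^{p^r}x+(y^l)^{p^r}y=1.$$
At a general point this gives the $p$-linearization polynomial $G_P=\alpha XY+\beta XZ+\gamma YZ-Z^2$, which is an irreducible conic and not a line. By the line analogue of Lemma~\ref{Le2.1}(b), intersections of lines with this conic have multiplicity at most $2<p$. This forces classicality: if $\f$ were nonclassical, the relation $y=\ell(x)$ modulo $p^r$-th powers would make $G$ divisible by a linear form.

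The step I expect to be hardest is making this line version of Lemma~\ref{Le2.1} rigorous. I would check it against the analogous statement in \cite{Gar} for $x^n+y^m=1$, where the same obstruction has to be handled.
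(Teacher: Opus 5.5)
Your route is workable but differs from the paper's, and the hard case is not yet a proof as written.

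\textbf{Comparison with the paper.} The paper uses no special points. Since $x$ is separating, nonclassicality w.r.t.\ $\Sigma_1$ means $D^{(2)}_xy=0$. Differentiating $mx(x^n-1)D^{(1)}_xy=ny(1-y^m)$ once more shows this is equivalent to $(n+m)-n(m+1)y^m-m(n+1)x^n=0$ in the function field. That polynomial has degree $<m+n$, so it must vanish identically. This forces $p\mid m+1$, $p\mid n+1$ and $p\mid m+n$, hence $p=2$. The argument is uniform in $m,n$, with no case split and no auxiliary lemma.

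You instead transplant the paper's conic strategy to lines. Proposition~\ref{Prop1.1} at $P_\xi$ and $P_\rho$ settles every case except $p\mid n-1$, $p\mid m-1$; your expansion $y=\xi(1-\frac{1+a}{m}x^n+\cdots)$ is correct and uses $a\neq -1$. In the remaining case, a $p$-power rewriting produces an irreducible conic instead of a line. This explains structurally why this curve differs from $x^n+y^m=1$, where the same rewriting yields a line and hence nonclassicality. The price is a case distinction and an extra lemma.

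\textbf{Repairs needed in the hard case.} First, the exponent $p^r=\varepsilon_2$ from the order sequence is unrelated to $n-1$ and $m-1$, so you cannot write $n=kp^r+1$, $m=lp^r+1$ with that $r$. You do not need to: take $n=kp+1$, $m=lp+1$ with $k,l\ge 0$.

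Second, the ``line version'' of Lemma~\ref{Le2.1}(b) that you leave open has a short proof, and you should include it.
\begin{itemize}
\item For a general $P=(u,v)$ put $\alpha=u^{kp}$, $\beta=v^{lp}$ and $G_P=a\alpha\beta XY+\alpha X+\beta Y-1$.
\item Subtracting the curve equation $a(x^ky^l)^pxy+(x^k)^px+(y^l)^py=1$ gives $G_P(x,y)=a(u^kv^l-x^ky^l)^pxy+(u^k-x^k)^px+(v^l-y^l)^py$. This has order at least $p$ at $P$.
\item If $\f$ were nonclassical, then $\varepsilon_2=p^r\ge p$. With $t=x-u$ as local parameter and $\lambda$ the slope of the tangent line, this gives $y=v+\lambda t+O(t^p)$, and hence $G_P(u+t,v+\lambda t)=O(t^p)$.
\item But $G_P(u+s,v+\lambda s)$ is a polynomial of degree at most $2$ in $s$ with a zero of order $\ge p>2$ at $s=0$. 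So it vanishes identically, i.e.\ the tangent line lies on $\mathcal{G}_P$.
\item This is impossible: the conic $a\alpha\beta XY+\alpha XZ+\beta YZ-Z^2$ has determinant $a(a+1)\alpha^2\beta^2/4\neq 0$ for $uv\neq 0$ and $a\neq -1$. So $\mathcal{G}_P$ is nonsingular and contains no line.
\end{itemize}
With these two fixes your argument is a complete proof.

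One minor point: your caveat about $m=1$ is unnecessary. At $P_\xi$ one has $\partial f/\partial y=m\xi^{m-1}\neq 0$ for every $m\ge 1$.
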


\begin{proof}
Since $p \nmid n$, $x$ is a separable variable. Note that the nonclassicality of $\f$ is equivalent to $D_x^{(2)}y = 0$. Using the equation $ax^n y^m + x^n + y^m = 1$, we obtain
\begin{equation}\label{Eq3.1}
    D_x^{(1)}y=\frac{ny(1-y^m)}{mx(x^n-1)}.
\end{equation}
Applying $D_x^{(1)}$ again gives us
\begin{equation} \label{Eq3.2}
D_x^{(1)}y \big(n - n(m+1)y^m + m - m(n+1)x^n\big) + 2 m x D_x^{(2)}y \big(1 - x^n\big) = 0.    
\end{equation}
Thus $\f$ is nonclassical if and only if
\begin{align*}    
D_x^{(1)}y \big(n - n(m+1)y^m + m - m(n+1)x^n\big) = 0.
\end{align*}
Since  $D_x^{(1)}y$ is non-zero, we conclude that
\begin{equation}\label{Eq3.3}
(n+m) - n(m+1)y^m - m(n+1)x^n = 0.
\end{equation}
However, $\deg \f=m+n$, and then equation \eqref{Eq3.3} is possible if and only if $p \mid m+1$, $p \mid n+1$ and $p \mid m+n$. In turn, since $p \nmid mn$, the last assertion is equivalent to $p=2$, which is a contradiction.
\end{proof}

From Propositions \ref{Prop1.4} and \ref{Prop3.1}, we immediately obtain the following.

\begin{proposition}
    Assume $p>2$. For all positive integers $m,n$ with $p \nmid mn$, the curve $\f$ defined over $\F$ is $\F$-Frobenius classical w.r.t. $\Sigma_1$.
\end{proposition}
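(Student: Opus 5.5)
The plan is to combine Proposition~\ref{Prop3.1} with Proposition~\ref{Prop1.4}, applied to the linear system $\Sigma_1$. For $\Sigma_1$ we have $M=\binom{3}{2}-1=2$, so the hypothesis of Proposition~\ref{Prop1.4} is $p>M=2$. This is exactly the standing assumption $p>2$.

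First, assume for contradiction that $\f$ is $\F$-Frobenius nonclassical w.r.t. $\Sigma_1$. Since $p>2=M$, Proposition~\ref{Prop1.4} gives that $\f$ is nonclassical w.r.t. $\Sigma_1$. But Proposition~\ref{Prop3.1} asserts that, for $p>2$ and $p\nmid mn$, the curve $\f$ is classical w.r.t. $\Sigma_1$. This contradiction shows that $\f$ is $\F$-Frobenius classical.

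Alternatively, one can argue directly from the order sequences. Classicality gives the order sequence $(\varepsilon_0,\varepsilon_1,\varepsilon_2)=(0,1,2)$. By \cite[Proposition 2.1]{SV}, the Frobenius orders $(\nu_0,\nu_1)$ are obtained by deleting one $\varepsilon_I$ with $I\ge 1$, so $(\nu_0,\nu_1)\in\{(0,1),(0,2)\}$. The case $\nu_1=2$ is exactly what Proposition~\ref{Prop1.4} rules out when $p>2$, since the only way to have $\nu_1=2$ with $p>2$ is through $\varepsilon_1$ being nonclassical. Hence $(\nu_0,\nu_1)=(0,1)$.

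There is no real obstacle here. The only point to check is that the hypotheses of Proposition~\ref{Prop3.1}, namely $p\nmid mn$ and nonzero coefficients, are in force. Both hold by the standing assumptions of this section. The normalization $b=c=1$ used in Proposition~\ref{Prop3.1} is harmless, because classicality is a geometric property invariant under the change of variables over $\overline{\mathbb{F}}_q$ that achieves it.
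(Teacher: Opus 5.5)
Your main argument is exactly the paper's: Proposition~\ref{Prop3.1} gives classicality w.r.t.\ $\Sigma_1$, and Proposition~\ref{Prop1.4} with $M=2<p$ then rules out $\F$-Frobenius nonclassicality. The ``alternative'' paragraph adds nothing independent, since its explanation of why $(\nu_0,\nu_1)=(0,2)$ is impossible is vague and in the end just re-invokes Proposition~\ref{Prop1.4}, so you could drop it.
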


\subsection{Classicality of $\f$ w.r.t. $\Sigma_2$}\label{ss4.2}

In order to investigate the nonclassicality of $\f$ w.r.t. $\Sigma_2$, we will use the next result, which is an adaptation of \cite[Proposizione 3.3]{AK}. The proof is analogous and will be omitted. 

\begin{proposition} \label{Prop3.10}
Let $\f: aX^n Y^m + bX^n Z^m + cY^m Z^n - Z^{m+n} = 0$ be a projective curve defined over $\F$, where $a,b,c \in \F^{*}$, $c \ne -\frac{a}{b}$, and $m,n$ are positive integers with $p \nmid mn$.  
Assuming $p > 5$ and that $\f$ is classical w.r.t. $\Sigma_1$, we have:
\begin{itemize}
    \item[(a)] $\f$ is geometrically irreducible;
    \item[(b)] The genus of $\f$ is $(n-1)(m-1)$;
    \item[(c)] The only singular points of $\f$ are $P=(1:0:0)$ and $Q=(0:1:0)$, with $m_P(\f) = m$ and $m_Q(\f) = n$, respectively. Moreover, the tangent lines to $\f$ at $P$ and $Q$ are given by the affine equations $y = \alpha$ and $x = \beta$, respectively, where $\alpha^m = -b a^{-1}$ and $\beta^n = -c a^{-1}$. These tangent lines intersect $\f$ at the corresponding points with multiplicity $m+n$;
    \item[(d)] The intersection multiplicity of a branch centered at $P$ and $Q$ with its tangent line is $m+1$ and $n+1$, respectively;
    \item[(e)] The points $P_{\xi} = (0:\xi:1)$ and $P_{\rho} = (\rho:0:1)$, with $\xi^m = c^{-1}$ and $\rho^n = b^{-1}$, are inflection points of $\f$. Furthermore, the tangent lines to $\f$ at $P_{\xi}$ and $P_{\rho}$ are given by $y = \xi$ and $x = \rho$, respectively, and these lines intersect $\f$ with multiplicities $n$ and $m$, respectively.
\end{itemize}    
\end{proposition}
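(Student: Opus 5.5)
I will prove the five items by working directly with the homogeneous equation $F=aX^nY^m+bX^nZ^m+cY^mZ^n-Z^{m+n}$ and local expansions, in the order (c), (e), (d), (a), (b).

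\textbf{Item (c): singular points and tangents.} First I locate the singular points. The affine partials are $x^{n-1}(n)(ay^m+b)$ and $y^{m-1}(m)(ax^n+c)$. Since $p\nmid mn$, an affine singularity would need one of two things. Either $x=0$ or $ay^m=-b$; and either $y=0$ or $ax^n=-c$. Each combination is incompatible with $F=0$ under $a,b,c\neq 0$ and $c\neq -a/b$. For example, $ay^m=-b$ and $ax^n=-c$ give $x^n(ay^m+b)+cy^m=1$, hence $-cb/a=1$, which is excluded.

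At infinity, $Z=0$ forces $X^nY^m=0$, so the only points there are $P=(1:0:0)$ and $Q=(0:1:0)$. At $P$, set $X=1$. The lowest-degree form in $(Y,Z)$ is $aY^m+bZ^m$, since the other terms have degree $\ge m+1$ (using $n\ge 1$). Hence $m_P=m$, and the tangents are the $m$ distinct lines $Y=\alpha Z$ with $\alpha^m=-b/a$. The case of $Q$ is symmetric, with form $aX^n+cZ^n$.

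To get the intersection multiplicity of the tangent $y=\alpha$ with $\f$, I substitute $y=\alpha$ into the affine equation. This gives $x^n(a\alpha^m+b)+c\alpha^m-1=0$, which collapses to the constant $c\alpha^m-1\neq 0$. So the line meets $\f$ only at infinity, and B\'ezout gives total multiplicity $m+n$, all of it at $P$.

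\textbf{Item (e): the points $P_\xi$ and $P_\rho$.} Substituting $y=\xi$ with $c\xi^m=1$ gives $x^n(a\xi^m+b)=0$. Here $a\xi^m+b\neq0$, because otherwise $c=-a/b$. So the multiplicity is $n$ at $x=0$; similarly the multiplicity is $m$ at $P_\rho$. Smoothness at these points follows from the computation of the partials in (c).

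\textbf{Item (d): branches at $P$ and $Q$.} The $m$ tangents at $P$ are distinct, so $P$ is an ordinary $m$-fold point with $m$ linear branches, one per tangent. The tangent $Y=\alpha Z$ meets the other $m-1$ branches transversally, contributing $1$ each. The total is $m+n$ by (c), so the branch tangent to it contributes $m+n-(m-1)=n+1$.

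This conflicts with the statement's attribution of $m+1$ to $P$. I expect checking this labeling to be the main delicate point. I would settle it by an explicit parametrization: set $Y=\alpha Z+w$ with $X=1$ and solve for $w$ as a power series in $Z$. The leading balance $ma\alpha^{m-1}w\,Z^{m-1}\sim -cZ^n\alpha^m$ gives $w\sim Z^{n-m+1}$, so the order of contact is $n+1$. The statement's labels of $m+1$ and $n+1$ should therefore be read in whichever convention the paper adopts.

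\textbf{Items (a) and (b): irreducibility and genus.} For (a), since $P$ and $Q$ are ordinary, I would argue as follows. A factorization $F=F_1F_2$ would have $F_1\cap F_2$ supported on the singular points. The intersection number there is at most $\sum m_P(F_1)m_P(F_2)$ and $\sum m_Q(F_1)m_Q(F_2)$, and this is too small compared with $\deg F_1\deg F_2$, by a standard counting argument. Alternatively, I would view $\f$ as a Kummer-type cover: the equation reads $x^n=(1-cy^m)/(ay^m+b)$. The right-hand side has simple zeros at the $y$ with $cy^m=1$, so Eisenstein-type irreducibility over $\overline{\mathbb F}_q(y)$ holds. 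I would use this second argument, since it also gives (b).

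For (b), the cover $x^n=g(y)$ of degree $n$ is totally ramified exactly at the $m$ zeros and $m$ poles of $g$. The zeros are the $y$ with $cy^m=1$; the poles are the $y$ with $ay^m=-b$. At $y=\infty$, $g\to c/a\neq 0$, so it is unramified. Riemann--Hurwitz then gives $2g-2=n(-2)+2m(n-1)$, so $g=(n-1)(m-1)$.
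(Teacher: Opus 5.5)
The paper gives no proof of this proposition; it only says the result is an adaptation of [AK, Proposizione 3.3]. Your argument therefore stands on its own, and it is essentially correct:
\begin{itemize}
\item (c) and (e) follow by direct substitution plus B\'ezout;
\item (d) follows by counting branches at an ordinary point;
\item (a) and (b) follow from the Kummer cover $x^n=g(y)$, using Eisenstein and Riemann--Hurwitz.
\end{itemize}
A byproduct is that the hypotheses $p>5$ and $\Sigma_1$-classicality are never used. You only need $p\nmid mn$, $abc\neq 0$, $c\neq -a/b$, and $m,n\ge 3$ for the word ``inflection''.

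\textbf{On (d).} You are right, and you should say it without hedging: this is not a matter of convention. As written, (d) is false whenever $m\neq n$.
\begin{itemize}
\item At $P=(1:0:0)$ the tangent $Y=\alpha Z$ meets $\mathcal{F}$ only at $P$, with total multiplicity $m+n$.
\item The other $m-1$ branches are linear and transversal to it, contributing $1$ each.
\item Hence the tangent branch has contact $n+1$ at $P$, and symmetrically $m+1$ at $Q$.
\end{itemize}
As a sanity check, for $m=1$ the point $P$ is smooth and its tangent has contact $m+n=n+1$, not $2$. The labels are simply swapped. This is invisible in [AK], where $m=n$, and harmless in Proposition 3.2, which uses both divisibility conditions symmetrically.

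Your explicit parametrization contains a slip, though. You dropped the factor $Z^m$ from $cY^mZ^n$. The correct leading balance is $ma\alpha^{m-1}wZ^{m-1}\sim -(c\alpha^m-1)Z^{m+n}$, which gives $w\sim Z^{n+1}$. Here $c\alpha^m-1=-(a+bc)/a\neq 0$. Your written $w\sim Z^{n-m+1}$ contradicts the conclusion $n+1$ you draw from it. The branch count already settles the point, so just fix or drop that line.

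\textbf{Minor points.}
\begin{itemize}
\item In (a), Eisenstein gives irreducibility of $T^n-g$ over $\overline{\mathbb{F}}_q(y)$. To pass to irreducibility of $(ay^m+b)x^n-(1-cy^m)$ in $\overline{\mathbb{F}}_q[x,y]$, you need the Gauss-lemma step that $ay^m+b$ and $1-cy^m$ are coprime. State it explicitly: this is exactly where $c\neq -a/b$ enters. Otherwise the polynomial factors as $(ay^m+b)(x^n-1/b)$ and $g$ is constant. The same coprimality is what makes the zeros of $g$ simple.
\item At $y=\infty$ one has $g\to -c/a$, not $c/a$. This does not affect unramifiedness.
\item In (e), concluding that $y=\xi$ is the tangent from contact $n$ uses $n\ge 2$, and ``inflection'' needs contact at least $3$. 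This is covered by the paper's standing assumption $\min\{m,n\}>2$.
\end{itemize}
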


\begin{proposition}\label{Prop3.2}
If $p > 5$ and the curve $\f$ defined over $\F$ is nonclassical w.r.t. $\Sigma_2$, then 
\[
p \mid (n+1)(n-1) \quad \text{and} \quad p \mid (m+1)(m-1).  
\]    
\end{proposition}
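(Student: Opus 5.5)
The plan mirrors the proof of Proposition~\ref{Prop2.2}: compute the $(\Sigma_2,R)$-order sequences at suitable special points $R\in\f$ and then apply Proposition~\ref{Prop1.1}. If some such sequence gives a product $\prod_{i>k}(j_i-j_k)/(i-k)$ prime to $p$, then $\f$ is classical w.r.t. $\Sigma_2$. Contrapositively, nonclassicality forces $p$ to divide these products, and we will read off the stated divisibilities.

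\emph{Step 1: the order sequences at $P_\xi$ and $P_\rho$.} By Proposition~\ref{Prop3.10}(e), $P_\xi$ is a nonsingular point whose tangent line $y=\xi$ meets $\f$ with multiplicity $n$, so the $(\Sigma_1,P_\xi)$-orders are $(0,1,n)$. Since $n\ge m$ and we may assume $m>2$, products of two lines give the $(\Sigma_2,P_\xi)$-orders $(0,1,2,n,n+1,2n)$, exactly as in Proposition~\ref{Prop2.2}. The product in Proposition~\ref{Prop1.1} is then, up to units when $p>5$, the product of the factors $(n-1)(n-2)(n+1)(2n-1)$. Hence nonclassicality gives $p\mid(n-1)(n-2)(n+1)(2n-1)$. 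The same argument at $P_\rho$ gives $p\mid(m-1)(m-2)(m+1)(2m-1)$. This only matches the earlier result, so we still need to remove the factors $n-2$ and $2n-1$ (and likewise $m-2$ and $2m-1$).

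\emph{Step 2: the singular points.} By Proposition~\ref{Prop3.10}(c),(d), the point $Q$ has $n$ distinct linear branches. Each branch $\gamma$ meets its tangent line $x=\beta$ with multiplicity $n+1$. The orders of the linear system $\Sigma_2$ are also defined branchwise (Stöhr--Voloch work on the nonsingular model), so the $(\Sigma_1,\gamma)$-orders are $(0,1,n+1)$. The $(\Sigma_2,\gamma)$-orders should then be $(0,1,2,n+1,n+2,2n+2)$. To justify this we must check that no irreducible conic osculates the branch to order between $3$ and $n+1$ except through the tangent line. This follows because the branch agrees with $x=\beta$ up to order $n+1$, so any conic $C$ meets it with multiplicity $\min$ of the multiplicity of $C$ with the line (at most $2$ for an irreducible conic) and $n+1$.

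Applying Proposition~\ref{Prop1.1} at this branch, the product is, up to units, the product of $n,(n+1),(n-1),(n+2),(2n+1),(2n)$ and similar factors. Since $p\nmid n$ and $p>5$, nonclassicality forces $p\mid(n-1)(n+1)(n+2)(2n+1)$.

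\emph{Step 3: combine.} Intersect the conditions from Steps 1 and 2. If $p$ divides $n-2$ or $2n-1$, and also $n+2$ or $2n+1$, then $p$ divides one of the differences $4$, $n-2-(2n+1)$ type combinations. Concretely: from $p\mid n-2$ and $p\mid n+2$ we get $p\mid 4$. From $p\mid n-2$ and $p\mid 2n+1$ we get $p\mid 5$. From $p\mid 2n-1$ and $p\mid n+2$ we get $p\mid 5$. From $p\mid 2n-1$ and $p\mid 2n+1$ we get $p\mid 2$. Each of these is impossible for $p>5$. Therefore $p\mid(n-1)(n+1)$. The argument at the branches of $P$, with $m$ in place of $n$, gives $p\mid(m-1)(m+1)$.

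The main obstacle is Step 2: justifying the branchwise order sequence $(0,1,2,n+1,n+2,2n+2)$ at the singular points. This requires using Proposition~\ref{Prop1.1} for a branch rather than a point, and checking that no irreducible conic has higher contact. I would verify the latter by a local parametrization of the branch, $x=\beta+t^{n+1}u(t)$ with $y$ a local parameter, and direct substitution into a general conic.
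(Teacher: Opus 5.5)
Your proposal is correct and is essentially the paper's own proof. It uses the order sequences $(0,1,2,n,n+1,2n)$ and $(0,1,2,m,m+1,2m)$ at $P_\xi,P_\rho$ and the sequences $(0,1,2,k+1,k+2,2k+2)$, $k\in\{m,n\}$, at the linear branches of the two singular points, applies Proposition~\ref{Prop1.1}, and then eliminates $n-2$, $2n-1$ (resp.\ $m-2$, $2m-1$) using $p>5$; you spell out that last step more explicitly than the paper does.

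One small remark: a direct local computation shows that the branches at $Q=(0:1:0)$ meet their tangents with multiplicity $m+1$ and those at $P=(1:0:0)$ with multiplicity $n+1$. This is the reverse of what Proposition~\ref{Prop3.10}(d) states, but since both singular points are used, the swap does not affect the conclusion.
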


\begin{proof}
Taking into account the inflection points, the $(\Sigma_1, P_{\xi})$-order sequence is given by $(0,1,n)$, with $n \geq 3$, and consequently, the $(\Sigma_2, P_{\xi})$-order sequence is $(0,1,2,n,n+1,2n)$.  
Similarly, using $P_{\rho}$, we obtain that the $(\Sigma_2, P_{\rho})$-order sequence is $(0,1,2,m,m+1,2m)$.
By Proposition \ref{Prop1.1}, it follows that if $\f$ is nonclassical w.r.t. $\Sigma_2$, then
\begin{equation} \label{Ep1}
    p \mid (2n-1)(n-2)(n+1)(n-1) \quad \text{and} \quad p \mid (2m-1)(m-2)(m+1)(m-1). 
\end{equation}

Now, let  
$\gamma$ be a branch of $\f$ centered at $P$.  
By Proposition \ref{Prop3.10}, the $(\Sigma_1, \gamma)$-order sequence is $(0,1,m+1)$, and consequently, the $(\Sigma_2, P)$-order sequence is $(0,1,2,m+1,m+2,2m+2)$.  
Analogously, if $\zeta$ is a branch of $\f$ centered at $Q$, the $(\Sigma_2, \zeta)$-order sequence is $(0,1,2,n+1,n+2,2n+2)$.

Again, using Proposition \ref{Prop1.1}, it follows that if $\f$ is nonclassical w.r.t. $\Sigma_2$, then
\begin{equation} \label{Ep2}
    p \mid (n+1)(n-1)(n+2)(2n+1) \quad \text{and} \quad p \mid (m+1)(m-1)(m+2)(2m+1).
\end{equation}

Combining \eqref{Ep1} and \eqref{Ep2}, we conclude that $p \mid (n+1)(n-1)$ and $p \mid (m+1)(m-1)$.
\end{proof}

\begin{lemma}\label{Le3.1} 
Assuming $p > 2$ and $a \neq -1$, the following projective curves are irreducible over $\overline{\mathbb{F}}_q$:
\begin{itemize}
    \item[(a)] $\f_1: aXY + XZ + YZ - Z^2 = 0$; 
    \item[(b)] $\f_2: aXZ + XY + Z^2 - YZ = 0$; 
    \item[(c)] $\f_3: aYZ + XY + Z^2 - XZ = 0$; 
    \item[(d)] $\f_4: aZ^2 + YZ + XZ - XY = 0$. 
\end{itemize}
\end{lemma}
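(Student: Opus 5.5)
A projective plane conic is reducible over $\overline{\mathbb{F}}_q$ exactly when it splits into two lines. Since $p>2$, this happens exactly when its associated symmetric $3\times 3$ matrix is singular. The plan is therefore to write down, for each of the four quadratic forms, the symmetric matrix with respect to $(X,Y,Z)$ and show that its determinant is a nonzero constant multiple of $a+1$. Under the hypothesis $a\neq -1$ this gives irreducibility.

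For $\f_1$, write the quadratic form as $2\cdot\frac12(\cdots)$. Up to the factor $\frac12$ on off-diagonal entries, the matrix is
\[
M_1=\begin{pmatrix} 0 & a/2 & 1/2\\ a/2 & 0 & 1/2\\ 1/2 & 1/2 & -1\end{pmatrix}.
\]
Expanding gives $\det M_1 = \frac{a^2}{4}+\frac{a}{4}=\frac{a(a+1)}{4}$, which is nonzero because $a\neq0$ (standing assumption of this section) and $a\neq -1$. The other three curves are handled the same way.

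\begin{itemize}
\item For $\f_2$, the matrix has entries $m_{13}=a/2$, $m_{12}=1/2$, $m_{33}=1$, $m_{23}=-1/2$, and zeros elsewhere.
\item For $\f_3$, the entries are $m_{23}=a/2$, $m_{12}=1/2$, $m_{33}=1$, $m_{13}=-1/2$.
\item For $\f_4$, the entries are $m_{33}=a$, $m_{23}=m_{13}=1/2$, $m_{12}=-1/2$.
\end{itemize}

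A short expansion in each case should again give $\pm\frac14 a(a+1)$ or $\pm\frac14(a+1)$. For instance, for $\f_4$ one gets $-\frac a4-\frac14$.

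As a cross-check, or as an alternative that avoids dividing by $2$ in the matrix, one can argue directly. Suppose the form equals $(\alpha X+\beta Y+\gamma Z)(\alpha' X+\beta' Y+\gamma' Z)$. The missing monomials (such as $X^2$ and $Y^2$ for $\f_1$) force $\alpha\alpha'=0$ and $\beta\beta'=0$. One then runs through the few resulting cases and reaches either $a=-1$ or $a=0$.

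The only real risk is an arithmetic slip in the four determinants, so the main care goes into the sign bookkeeping there. The fact that each determinant should visibly contain the factor $a+1$ serves as a check on the computation.
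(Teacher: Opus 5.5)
Your proof is correct and is essentially the paper's argument. The paper simply asserts that the four conics are nonsingular and hence irreducible, and your determinants supply the explicit check of that nonsingularity for $p>2$: $\frac{a(a+1)}{4}$ for $\f_1$ and $-\frac{a+1}{4}$ for each of $\f_2,\f_3,\f_4$, which check out. You are also right that $\f_1$ needs $a\neq 0$ (it becomes $Z(X+Y-Z)$ when $a=0$); this is covered by the section's standing assumption that $a,b,c$ are nonzero, which the lemma's statement leaves implicit.
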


\begin{proof}
For all cases, it suffices to observe that the conics are non-singular, and therefore irreducible.
\end{proof}

\begin{proposition} \label{Prop3.3} 
Assume $p>5$, $b=c=1$ and $a \neq -1$. The curve $\f$ is nonclassical w.r.t. $\Sigma_2$ in the following cases:
\begin{itemize}
    \item[(a)] $p \mid (n-1)$ and $p \mid (m-1);$
    \item[(b)] $p \mid (n-1)$ and $p \mid (m+1);$
    \item[(c)] $p \mid (m-1)$ and $p \mid (n+1);$
    \item[(d)] $p \mid (n+1)$ and $p \mid (m+1).$
\end{itemize}         
\end{proposition}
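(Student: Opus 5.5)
We apply Lemma~\ref{Le2.1}(a) in each of the four cases, exactly as in the proof of Proposition~\ref{Prop2.4}. The conics to aim for are those of Lemma~\ref{Le3.1}, which are irreducible whenever $a\neq -1$ and $p>2$. In each case we write the exponents $n$ and $m$ in $p$-adic form. We rewrite the equation $ax^ny^m+x^n+y^m=1$, possibly after multiplying by a suitable monomial in $x,y$, as a polynomial relation $G(x,y)=0$. The coefficients of $G$ are $p^s$-th powers of functions, and $G$ has degree $2$ in the "free" variables $x,y$. For a general point $P=(u:v:1)$ we set $\alpha=(u^w)^{p^s}$ and $\beta=(v^l)^{p^s}$. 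The specialized conic $\mathcal{G}_P$ will then be, up to rescaling the coordinates $X,Y$ by the nonzero constants $\alpha,\beta$, one of $\f_1,\dots,\f_4$.

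\emph{Case (a).} Here $p\mid(n-1)$ and $p\mid(m-1)$. Write $n=p^rk+1$ and $m=p^sl+1$ with $p\nmid kl$, and assume without loss of generality $r\ge s$, so that $n=wp^s+1$ with $w=p^{r-s}k$. The equation becomes
\[
a(x^w)^{p^s}(y^l)^{p^s}xy+(x^w)^{p^s}x+(y^l)^{p^s}y-1=0 .
\]
The associated conic is $a\alpha\beta XY+\alpha XZ+\beta YZ-Z^2=0$. Substituting $X'=\alpha X$ and $Y'=\beta Y$ turns it into $\f_1$, which is irreducible by Lemma~\ref{Le3.1}(a). Lemma~\ref{Le2.1}(a) then gives nonclassicality.

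\emph{Case (b).} Here $p\mid(n-1)$ and $p\mid(m+1)$, so $n=wp^s+1$ and $m=p^sl-1$ (again arranging the common $p^s$, reducing $w$ or $l$ as needed). The $y$-exponents are now negative, so we multiply the equation by $y$:
\[
a(x^w)^{p^s}(y^l)^{p^s}x+(x^w)^{p^s}xy+(y^l)^{p^s}-y=0 .
\]
After the analogous rescaling, the conic $a\alpha\beta XZ+\alpha XY+\beta Z^2-YZ$ is projectively $\f_2$. Case (c) is symmetric to (b): multiply by $x$ and obtain $\f_3$.

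\emph{Case (d).} Here $p\mid(n+1)$ and $p\mid(m+1)$. We multiply the equation by $xy$:
\[
a(x^w)^{p^s}(y^l)^{p^s}+(x^w)^{p^s}y+(y^l)^{p^s}x-xy=0 .
\]
This yields $a\alpha\beta Z^2+\alpha YZ+\beta XZ-XY$, which is projectively $\f_4$.

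One subtlety is that when $r\neq s$ the smaller exponent must absorb the extra power of $p$: if $r\ge s$ we factor $p^s$ out of both exponents. This works because we only need a common $p^s$ with $s\ge1$, and Lemma~\ref{Le2.1} allows arbitrary $p$-th-power coefficients. The step I expect to be the main obstacle is checking that the rescaling is legitimate, i.e.\ that $\alpha\beta\neq0$ at a general point. Once $\alpha\beta\ne0$, the conic is projectively equivalent to one in Lemma~\ref{Le3.1}, hence irreducible with the coefficient $a$ preserved (we need $a\neq-1$ there). We must also confirm that its degree is exactly $2$, so that Lemma~\ref{Le2.1}(a) applies rather than some degenerate case.
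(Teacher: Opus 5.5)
Your proposal is correct and is essentially the paper's own proof: the same $p$-adic splitting of $n\mp1$, $m\mp1$ with a common $p^s$, the same clearing of negative exponents by multiplying by $y$, $x$ or $xy$, and the same four conics fed into Lemma~\ref{Le2.1}(a), with irreducibility coming from Lemma~\ref{Le3.1}. Your explicit diagonal change of coordinates makes visible the projective equivalence with the normalized conics $\f_1,\dots,\f_4$, which the paper leaves implicit. That change differs slightly from case to case: for instance $X'=\alpha X$, $Y'=Y/\beta$ in (b) and $X'=X/\alpha$, $Y'=Y/\beta$ in (d), rather than literally the one used in (a). It always exists because $\alpha\beta\neq 0$ at a general point.
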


\begin{proof} 
The proof of each item will follow from Lemma~\ref{Le2.1a}(a), in the same fashion as Proposition \ref{Prop2.4}. Following the notation of such Lemma, in each case we provide a polynomial $G(X,Y)\in \overline{\mathbb{F}}_q[x,y][X,Y]$ and for a general point $P=(u:v:1)$, the corresponding absolutely irreducible conic $\mathcal{G}_P$. In what follows, for integers $r,s,w,l$, we set $\alpha=(u^w)^{p^s}$ and $\beta=(v^l)^{p^s}$.
\begin{itemize}
    \item[(a)] Let $r,s,k,l$ be integers such that $n = p^r k + 1$ and $m = p^s l + 1$, with $p \nmid k$ and $p \nmid l$. WLOG , assume $r \geq s$, i.e., $r = s+d$ for some integer $d$, so that $n = (p^d k) p^s + 1 = w p^s + 1$. Hence:
    \begin{equation}\label{E3.1}
        ax^n y^m + x^n + y^m - 1 = 0 
        \iff a(x^w)^{p^s} x (y^l)^{p^s} y + (x^w)^{p^s} x + (y^l)^{p^s} y - 1 = 0,
    \end{equation}
    and then we have the associated irreducible conic
    $\mathcal{G}_P: a\alpha\beta XY + \alpha XZ + \beta YZ - Z^2 = 0$.

    \item[(b)] Let $r,s,k,l$ be integers such that $n = p^r k + 1$ and $m = p^s l - 1$, with $p \nmid k$ and $p \nmid l$. WLOG, assume $r \geq s$, i.e., $r = s+d$, so that $n = (p^d k) p^s + 1 = w p^s + 1$. Then
    \begin{equation}\label{E3.2}
        ax^n y^m + x^n + y^m - 1 = 0
        \iff a(x^w)^{p^s} x (y^l)^{p^s} y^{-1} + (x^w)^{p^s} x + (y^l)^{p^s} y^{-1} - 1 = 0,
    \end{equation}
     and the associated irreducible conic is
    $\mathcal{G}_P: a\alpha\beta XZ + \alpha XY + \beta Z^2 - YZ = 0$.
 
    \item[(c)] By reasoning analogously to (b) and applying the projectivity $(X:Y:Z) \mapsto (Y:X:Z)$, we obtain the associated irreducible conic
    $\mathcal{G}_P: aYZ + XY + Z^2 - XZ = 0$.

    \item[(d)] Let $r,s,k,l$ be integers such that $n = p^r k - 1$ and $m = p^s l - 1$, with $p \nmid k$ and $p \nmid l$. WLOG, assume $r \geq s$, i.e., $r = s+d$, so that $n = (p^d k) p^s - 1 = w p^s - 1$. Then
    \begin{equation}\label{E3.3}
        ax^n y^m + x^n + y^m - 1 = 0
        \iff a(x^w)^{p^s} x^{-1} (y^l)^{p^s} y^{-1} + (x^w)^{p^s} x^{-1} + (y^l)^{p^s} y^{-1} - 1 = 0,
    \end{equation}
    and the associated irreducible conic is
    $\mathcal{G}_P: a\alpha\beta Z^2 + \alpha YZ + \beta XZ - XY = 0$.
   
  \end{itemize}
\end{proof}

\begin{theorem} \label{Teo3.2} 
Assume $p>5$, $b=c=1$ and $a \neq -1$. The curve $\f$ is nonclassical w.r.t. $\Sigma_2$ if and only if one of the following conditions holds:
\begin{itemize}
    \item[(a)] $p \mid (n-1)$ and $p \mid (m-1);$
    \item[(b)] $p \mid (n-1)$ and $p \mid (m+1);$
    \item[(c)] $p \mid (m-1)$ and $p \mid (n+1);$
    \item[(d)] $p \mid (n+1)$ and $p \mid (m+1).$
\end{itemize}          
\end{theorem}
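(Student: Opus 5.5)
Theorem~\ref{Teo3.2} follows by combining the necessary condition of Proposition~\ref{Prop3.2} with the sufficient conditions of Proposition~\ref{Prop3.3}. Sufficiency is immediate: each of (a)--(d) is exactly one case of Proposition~\ref{Prop3.3}. For necessity, assume $\f$ is nonclassical w.r.t. $\Sigma_2$. Proposition~\ref{Prop3.2} then gives $p \mid (n+1)(n-1)$ and $p \mid (m+1)(m-1)$. Since $p$ is prime, $p$ divides $n-1$ or $n+1$, and likewise $p$ divides $m-1$ or $m+1$. The four possible combinations are precisely (a)--(d), with (b) and (c) being the mixed cases.

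I will check that the hypotheses of the earlier results match. Proposition~\ref{Prop3.2} is stated for general coefficients and relies on Proposition~\ref{Prop3.10}, which requires $p>5$, $c \neq -a/b$ and that $\f$ be classical w.r.t. $\Sigma_1$. With $b=c=1$, the condition $c\neq -a/b$ is exactly $a \ne -1$. Classicality w.r.t. $\Sigma_1$ holds by Proposition~\ref{Prop3.1}, since $p>2$ and $p\nmid mn$ (the standing assumption of this section). Proposition~\ref{Prop3.3} is stated under the same hypotheses $p>5$, $b=c=1$, $a\neq -1$.

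The only point needing care is the implicit standing assumption that the order sequences used in Proposition~\ref{Prop3.2} are valid. This requires $n \ge m \ge 3$, so that the inflection points $P_\xi, P_\rho$ give the orders listed there, which is part of the section's standing assumption $\min\{m,n\}>2$. Granted these, the theorem is a direct combination of the two propositions and needs no further calculation.
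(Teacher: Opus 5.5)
Your proposal is correct and is exactly the paper's argument: the paper proves the theorem in one line by combining Proposition~\ref{Prop3.2} for necessity with Proposition~\ref{Prop3.3} for sufficiency, and you correctly split $p\mid(n+1)(n-1)$ and $p\mid(m+1)(m-1)$ into the four cases (a)--(d) using that $p$ is prime. Your hypothesis checks are accurate and only make explicit what the paper leaves implicit: $a\neq -1$ is $c\neq -a/b$, $\Sigma_1$-classicality comes from Proposition~\ref{Prop3.1}, and $\min\{m,n\}>2$ is needed for the inflection-point orders.
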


\begin{proof} 
The proof follows directly from Propositions \ref{Prop3.2} and \ref{Prop3.3}.         
\end{proof}

\subsection{$\F$-Frobenius classicality of $\f$ with respect to $\Sigma_2$}

In this section, $\mathcal{H}_P$ will denote the osculating conic to $\f$ at the point $P$.

\begin{proposition} \label{Prop3.5}
Assume that $p>5$ and that $\f$ nonclassical w.r.t. $\Sigma_2$. Then, for any point $P=(u:v:1) \in \f$ with $uv \neq 0$, the osculating conic $\mathcal{H}_P$ to $\f$ at $P$ is the irreducible projective curve defined by $H_P(X,Y,Z)=0$ where

\[
H_P(X,Y,Z) =
\begin{cases}
au^{n-1}v^{m-1}XY + bu^{n-1}XZ + cv^{m-1}YZ - Z^2 = 0, & \text{if } p\mid (n-1)\ \text{and}\ p\mid (m-1);\\[6pt]
au^{n-1}v^{m+1}XZ + bu^{n-1}XY + cv^{m+1}Z^2 - YZ = 0, & \text{if } p\mid (n-1)\ \text{and}\ p\mid (m+1);\\[6pt]
au^{n+1}v^{m-1}YZ + bu^{n+1}Z^2 + cv^{m-1}XY - XZ = 0, & \text{if } p\mid (n+1)\ \text{and}\ p\mid (m-1);\\[6pt]
au^{n+1}v^{m+1}Z^2 + bu^{n+1}YZ + cv^{m+1}XZ - XY = 0, & \text{if } p\mid (n+1)\ \text{and}\ p\mid (m+1).
\end{cases}
\]
\end{proposition}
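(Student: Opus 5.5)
The plan mirrors the proof of Proposition~\ref{Prop2.5}: combine Theorem~\ref{Teo3.2} with the explicit conics of Proposition~\ref{Prop3.3}, now written for general $b,c$, and conclude with Lemma~\ref{Le2.1a}(a).

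First, by Theorem~\ref{Teo3.2}, nonclassicality w.r.t.\ $\Sigma_2$ forces one of the four divisibility configurations. Nonclassicality is a geometric property, and the substitution $x\mapsto b^{-1/n}x$, $y\mapsto c^{-1/m}y$ reduces to the case $b=c=1$. So the four cases are exactly those listed in the statement.

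Next, in each case I redo the decomposition of Proposition~\ref{Prop3.3} without normalizing $b,c$. For instance, when $p\mid(n-1)$ and $p\mid(m+1)$, write $n=wp^s+1$ and $m=lp^s-1$. Multiplying $ax^ny^m+bx^n+cy^m-1=0$ by $y$ gives
\[
a(x^w)^{p^s}(y^l)^{p^s}\,x+b(x^w)^{p^s}\,xy+c(y^l)^{p^s}-y=0 .
\]
This is a polynomial $G(X,Y)$ whose coefficients are $p^s$-th powers. The residual coefficients combine as $\alpha:=(u^w)^{p^s}=u^{n-1}$ and $\beta:=(v^l)^{p^s}=v^{m+1}$. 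The resulting $G_P$ is then exactly the second conic in the list, up to homogenization with respect to $Z$. The other three cases are the same computation, using $x^{n\mp1}$ and $y^{m\mp1}$ as the $p$-power parts. Note that $u^{n-1}$ is indeed a $p^s$-th power of a polynomial expression in $u$; the exponent shifts $\pm1$ are what produce the linear or inverse factors. Since $uv\neq0$, multiplying through by $x$ or $y$ introduces no spurious factor.

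The key remaining point is irreducibility of these conics for every $P$ with $uv\neq0$, not just a general one. I will check nonsingularity via the $3\times3$ symmetric matrix. In the first case,
\[
\det\begin{pmatrix}0&A&B\\ A&0&C\\ B&C&-2\end{pmatrix}
\]
with $A=au^{n-1}v^{m-1}/2$, $B=bu^{n-1}/2$, $C=cv^{m-1}/2$, equals $2A^2+2ABC$. This is nonzero if and only if $A+BC\neq0$, i.e.\ $u^{n-1}v^{m-1}(2a+bc)\neq0$, hmm, so it depends on $a$ versus $bc$. I expect a careful computation to show that the condition reduces to the hypothesis $c\neq -a/b$, i.e.\ $a+bc\neq0$, as in Lemma~\ref{Le3.1}, where $a\neq -1$ is used. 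The other cases are analogous.

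Lemma~\ref{Le2.1a}(a) only identifies $\mathcal{G}_P$ with the osculating conic $\mathcal{H}_P$ at general points. To pass to every $P$ with $uv\neq0$, I argue as follows. By uniqueness, the osculating conic is given by \eqref{EqHiper}, whose coefficients are rational functions on $\f$. The conic $G_P$ also has coefficients that are regular functions of $P$ on the locus $uv\neq0$. The two agree on a dense set, hence as functions wherever both are defined. At a special point $P$ with $uv\neq0$, the explicit conic still passes through $P$ with contact at least $p^r>4$, because $G(x,y)=0$ holds identically. Together with its irreducibility, this forces it to be $\mathcal{H}_P$.

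The main obstacle is this last step at possibly Weierstrass points: ensuring that the contact order is genuinely maximal, so that uniqueness of the osculating conic applies there.
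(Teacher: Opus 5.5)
Your route is the paper's: its proof is the single line ``Proposition~\ref{Prop3.3} and Lemma~\ref{Le2.1}''. Your polynomials $G$ (the defining equation times $1$, $y$, $x$ or $xy$) give exactly the four listed conics. However, Lemma~\ref{Le2.1}(a) only concerns a \emph{general} point. So the claim for every $P$ with $uv\neq0$ genuinely needs the two extra steps you attempt, and neither is finished.

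\textbf{The determinant is miscomputed.} You halved the off-diagonal entries but kept $-2$ in the $(3,3)$ slot. That is the matrix of a form ending in $-2Z^2$, which is where the spurious $2a+bc$ comes from. Take $A=au^{n-1}v^{m-1}$, $B=bu^{n-1}$, $C=cv^{m-1}$ without halving. Then your matrix is that of twice the conic, and its determinant is $2A(A+BC)=2a(a+bc)u^{2(n-1)}v^{2(m-1)}$. In the other three cases, with $A,B,C$ the coefficients carrying $a,b,c$, one gets $-2B(A+BC)$, $-2C(A+BC)$ and $-2(A+BC)$. In every case $A+BC$ is $a+bc$ times a monomial in $u,v$. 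So for $p>2$ every such $\mathcal{G}_P$ is nonsingular, hence irreducible, exactly because $c\neq -a/b$.

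\textbf{The passage to arbitrary $P$.} Your density argument does not work. The formula \eqref{EqHiper} is built from the orders $j_i(P)$, which jump at $\Sigma_2$-Weierstrass points, so $P\mapsto\mathcal{H}_P$ is not one family of rational functions. Agreement on a dense set only shows that $\mathcal{G}_P$ is the limit of generic osculating conics, and that limit is not a priori $\mathcal{H}_P$ at a Weierstrass point.

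The missing idea is that no knowledge of the exact contact is needed, only B\'ezout. Let $P=(u:v:1)\in\f$ with $uv\neq0$. It is a simple point, since a singular one would satisfy $au^n=-c$ and $av^m=-b$, forcing $a+bc=0$. Write the coefficients of $G$ as $a_{ij}(x,y)^{p^s}$. Since $G(x,y)=0$,
\[
G_P(x,y)=\sum_{i,j}\bigl(a_{ij}(u,v)-a_{ij}(x,y)\bigr)^{p^s}x^iy^j .
\]
Hence $I(P,\f\cap\mathcal{G}_P)\ge p^s\ge 7$, and by maximality also $I(P,\f\cap\mathcal{H}_P)\ge 7$.

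Now suppose $\mathcal{H}_P\neq\mathcal{G}_P$. In affine coordinates centred at $P$, write the branch of $\f$ at $P$ and $\mathcal{G}_P$ as graphs $Y=f(X)$ and $Y=g(X)$ over their common tangent, swapping $X,Y$ if needed. Then $\mathrm{ord}(g-f)=I(P,\f\cap\mathcal{G}_P)$. If $h=0$ is an affine equation of $\mathcal{H}_P$, then $h(X,g(X))-h(X,f(X))$ is divisible by $g-f$. It follows that
\[
I(P,\mathcal{G}_P\cap\mathcal{H}_P)\ge\min\{I(P,\f\cap\mathcal{G}_P),I(P,\f\cap\mathcal{H}_P)\}\ge 7 .
\]
But B\'ezout gives at most $4$, because the irreducible conic $\mathcal{G}_P$ has no common component with $\mathcal{H}_P$. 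So $\mathcal{H}_P=\mathcal{G}_P$ at every such $P$, Weierstrass or not. This argument also makes the density step, and even the appeal to Lemma~\ref{Le2.1}(a), unnecessary.
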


\begin{proof}
The proof follows from Proposition~\ref{Prop3.3} and Lemma \ref{Le2.1}.
\end{proof}

\begin{proposition} \label{Prop3.6}
Let $p>5$ and suppose that $p\mid (n-1)$ and $p\mid (m-1)$. Then, the curve $\f$ defined over $\F$ is $\F$-Frobenius nonclassical w.r.t. $\Sigma_2$ if and only if 
\[
m = n = \frac{q-1}{p^r - 1},
\]
for some $r < h$ such that $r \mid h$ and $a,b,c \in \mathbb{F}_{p^r}$.
\end{proposition}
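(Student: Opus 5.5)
We follow the pattern used for Propositions~\ref{Prop2.1'} and \ref{Prop2.8'}. By Proposition~\ref{Prop3.1} the curve $\f$ is classical w.r.t. $\Sigma_1$, and by Theorem~\ref{Teo3.2} it is nonclassical w.r.t. $\Sigma_2$. Hence the analogue of Proposition~\ref{Prop2.6} applies: the proof of \cite[Lemma 4.1]{AB1} only uses that the osculating conic at a general point is irreducible, and Proposition~\ref{Prop3.5} guarantees this. So $\f$ is $\F$-Frobenius nonclassical if and only if $\Phi_q(P)\in\mathcal{H}_P$ for infinitely many $P$. Substituting $(x^q,y^q)$ into the first conic of Proposition~\ref{Prop3.5}, this is equivalent to the vanishing in $\overline{\mathbb{F}}_q(\f)$ of
\[
a x^{n-1+q}y^{m-1+q}+b x^{n-1+q}+c y^{m-1+q}-1 .
\]

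Let $p^r$ and $p^s$ be the exact powers of $p$ dividing $n-1+q$ and $m-1+q$. Both are positive since $p\mid n-1$ and $p\mid m-1$. First we show $r=s$. Suppose $r<s$. Extracting $p^r$-th roots, the relation becomes $A(x^{(n-1+q)/p^r}, y^{p^{s-r}(m-1+q)/p^s})=0$ with $A(X,W)=a'XW+b'X+c'W-1$. Solving, $X=(1-c'W)/(a'W+b')$, so $x^{(n-1+q)/p^r}$ is a $p$-th power. Since $p\nmid (n-1+q)/p^r$, this would make $x$ inseparable, which is impossible because $p\nmid n$. The case $s<r$ is symmetric. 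With $r=s$, set $N=(n-1+q)/p^r$ and $M=(m-1+q)/p^r$, both coprime to $p$. The relation becomes $a^{1/p^r}x^Ny^M+b^{1/p^r}x^N+c^{1/p^r}y^M-1=0$ in $\overline{\mathbb{F}}_q(\f)$.

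The polynomial on the left is a curve of the same type as $\f$. By Proposition~\ref{Prop3.10}(a) it is absolutely irreducible (its coefficient condition $c'\neq -a'/b'$ follows from $c\neq -a/b$ by taking $p^r$-th roots). It therefore vanishes on $\f$ exactly when it is a scalar multiple of $ax^ny^m+bx^n+cy^m-1$. Comparing the monomial sets gives $N=n$ and $M=m$, and the constant term forces the scalar to be $1$. Hence $n-1+q=np^r$, $m-1+q=mp^r$, and $a^{1/p^r}=a$, $b^{1/p^r}=b$, $c^{1/p^r}=c$. These give $n=m=(q-1)/(p^r-1)$ and $a,b,c\in\mathbb{F}_{p^r}$. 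Since $n$ is an integer, $p^r-1\mid p^h-1$, which forces $r\mid h$. Finally $r<h$, because $r=h$ would give $n=1$.

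For the converse, assume $n=m=(q-1)/(p^r-1)$ and $a,b,c\in\mathbb{F}_{p^r}$. Then $n-1+q=np^r$, and the Frobenius relation above is exactly the $p^r$-th power of the defining equation, so it vanishes. Moreover $p\mid n-1$ because $n\equiv 1 \pmod{p}$, so we are in the first case of Proposition~\ref{Prop3.5}. The step needing the most care is the separability argument showing $r=s$, since the relation has a mixed term $xy$ rather than being additive in $x$ and $y$. Solving for $x^N$ as a rational function of the $p$-th power $y^{M}$ should handle this, provided $a'W+b'\neq0$ on $\f$, which holds generically.
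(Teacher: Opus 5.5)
Your proposal is correct and follows the paper's proof essentially step by step. It derives the Frobenius condition from the first osculating conic of Proposition~\ref{Prop3.5}, forces equal $p$-adic valuations of $n-1+q$ and $m-1+q$ by a separability argument, uses absolute irreducibility of the same-shape polynomial, and compares coefficients. Your version is somewhat more explicit than the paper's: you solve for $x^N$ as a rational function of a $p$-th power, derive $r\mid h$ and $r<h$, and check the converse as the $p^r$-th power of the defining equation. Also, the condition $a'W+b'\neq 0$ holds outright because $W$ is a nonconstant function, not merely generically.
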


\begin{proof}
From Propositions~\ref{Prop3.5} and~\ref{Prop2.6}(b), it follows that if $\f$ is $\F$-Frobenius nonclassical w.r.t. $\Sigma_2$ if and only if
\begin{equation}\label{E3.11}
    a x^{n-1+q} y^{m-1+q} + b x^{n-1+q} + c y^{m-1+q} - 1 = 0.
\end{equation}
Let $\alpha, \beta$ be positive integers, both co-prime to $p$, such that $n-1+q=p^r \alpha$ and $m-1+q=p^s \beta$, where $r,s$ are positive integers as well. Assume that $r>s$. Then \eqref{E3.11} implies
$$
a^{1/p^s} x^{p^{r-s}\alpha} y^{\beta} + b^{1/p^s} x^{p^{r-s}\alpha} + c^{1/p^s} y^{\beta} - 1 = 0,
$$
which implies that $y$ is not a separating element, a contradiction. Analogously, $r<s$ implies that $x$ is not separating. Thus $r=s$.
In particular, \eqref{E3.11} gives that
\begin{equation}\label{nova1}
a^{1/p^r} x^{\alpha} y^{\beta} + b^{1/p^r} x^{\alpha} + c^{1/p^r} y^{\beta} - 1 
\end{equation}
vanishes in $\F(\f)$, that is, $ax^n y^m + bx^n + cy^m - 1$ divides \eqref{nova1}. The function \eqref{nova1}, seen as a polynomial in $\F[x,y]$, is absolutely irreducible. Indeed, it has the same shape as the defining polynomial of $\f$, and then it is irreducible since $c^{1/p^r} \neq -(a/b)^{1/p^r}$. Hence we conclude that \eqref{nova1} implies $a,b,c \in \mathbb{F}_{p^r}$, $\alpha=n$ and $\beta=m$. Note that 
$$
\alpha=n \Longrightarrow n-1+q=p^rn \Longrightarrow  n=\frac{q-1}{p^r-1}.
$$
and $\beta=m$ implies $m=n=\frac{q-1}{p^r-1}$. The converse follows immediately by \eqref{E3.11}. 
\end{proof}

\begin{proposition} \label{Prop3.7}  
Let $p > 5$ and suppose that $p \mid (n-1)$ and $p \mid (m+1)$.  
Then the curve $\mathcal{F}$ defined over $\mathbb{F}_q$ is $\mathbb{F}_q$-Frobenius nonclassical w.r.t. $\Sigma_2$ if and only if $n=\frac{q-1}{p^r-1}$, $m=\frac{q-1}{p^r+1}$ for some $r<h$ such that $2r \mid h$, where $-b=ca^{p^r}$, $-a=cb^{p^r}$, $c^{p^r+1}=1$ and $a,b,c \in \mathbb{F}_{p^{2r}}$.  
\end{proposition}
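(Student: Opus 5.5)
By Proposition \ref{Prop3.5} (case $p\mid(n-1)$, $p\mid(m+1)$) and Proposition \ref{Prop2.6}(b), $\f$ is $\F$-Frobenius nonclassical w.r.t. $\Sigma_2$ if and only if $\Phi_q(P)=(x^q:y^q:1)$ lies on $\mathcal{H}_P$ generically. This is the identity
\begin{equation*}
a x^{n-1+q}y^{m+1}+b x^{n-1+q}y^{q}+c y^{m+1}-y^{q}=0
\end{equation*}
in $\overline{\mathbb{F}}_q(\f)$. Since $y\neq 0$, I split on the sign of $m+1-q$.

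If $m+1<q$, I divide by $y^{m+1}$ to obtain $a x^{n-1+q}+b x^{n-1+q}y^{q-m-1}+c-y^{q-m-1}=0$. As in Proposition \ref{Prop2.2'}, I write it as $(ax^ny^m+bx^n+cy^m-1)h(x,y)$ and specialize. Setting $x=0$ gives $c-y^{q-m-1}=(cy^m-1)h(0,y)$. This forces $y^m-c^{-1}$ to divide $y^{q-m-1}-c$, so $m\mid q-m-1$, i.e.\ $m\mid q-1$, together with a compatibility condition between $c$ and $c^{-1}$; I expect this to be where $c^{p^r+1}=1$ comes from.

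The cleaner route is to rewrite the identity as
\begin{equation*}
x^{n-1+q}\,(a+b\,y^{q-m-1}) = y^{q-m-1}-c .
\end{equation*}
On $\f$ we have $x^n=(1-cy^m)/(ay^m+b)$. Write $n-1+q=p^r\alpha$ and $q-m-1=p^s\beta$. The separability argument of Proposition \ref{Prop3.6} (both $x$ and $y$ are separating) should force $r=s$, and taking $p^r$-th roots gives
\begin{equation*}
a' x^{\alpha} + b' x^{\alpha}y^{\beta} + c' - y^{\beta}=0,
\end{equation*}
with $a'=a^{1/p^r}$ and similarly for $b',c'$. After multiplying by $y^{-\beta}$ and substituting $y\mapsto 1/y$, this is a curve of the same shape as $\f$, namely $c'Y^\beta+b'x^\alpha+a'x^\alpha Y^\beta=1$, hence absolutely irreducible. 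Its function field relation must then be proportional to the defining polynomial of $\f$ with $y^m$ replaced by $y^{-\beta}$. Matching the monomials $x^n y^m, x^n, y^m,1$ of $ax^ny^m+bx^n+cy^m-1$ against those of $y^{\beta}$ times the new relation forces $\alpha=n$ and $\beta=m$. Hence $n-1+q=p^rn$ and $q-m-1=p^rm$, i.e.\ $n=\frac{q-1}{p^r-1}$ and $m=\frac{q-1}{p^r+1}$. The coefficient comparison, after normalizing by the constant term, should give relations of the form $b=-c\,a^{p^r}$, $a=-c\,b^{p^r}$ and $c^{p^r+1}=1$. Iterating these twice yields $a,b,c\in\mathbb{F}_{p^{2r}}$. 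The integrality of $m=\frac{q-1}{p^r+1}$ with $p^r+1\mid p^h-1$ forces $h/r$ even, i.e.\ $2r\mid h$.

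If $m+1\ge q$, the identity becomes $ax^{n-1+q}y^{m+1-q}+bx^{n-1+q}+cy^{m+1-q}-1=0$. Here $\deg_y$ equals $m+1-q<m$, so the exponent comparison $m+1-q=p^rm$ is impossible, just as in Proposition \ref{Prop2.10'}. This case contributes no nonclassical curves.

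For the converse, assume the stated $n,m,r$ and the coefficient relations. Then $x^{n-1+q}=(x^n)^{p^r}$ and $y^{q-m-1}=(y^{m})^{p^r}$, and the Frobenius identity reduces to the $p^r$-th power of the curve equation, using $b=-ca^{p^r}$, $a=-cb^{p^r}$ and $c^{p^r+1}=1$. This is a direct check. The main obstacle is the coefficient bookkeeping in the proportionality step, since the unknown scalar $\lambda$ and the inversion $y\mapsto 1/y$ must be tracked carefully to get exactly the stated relations rather than a twisted variant.
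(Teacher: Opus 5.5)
Your proof is correct and follows the paper's own route. You get the Frobenius identity from Propositions \ref{Prop3.5} and \ref{Prop2.6}(b), rule out $q\le m+1$ by a $y$-degree argument, force equal $p$-adic valuations by separability, extract $p^r$-th roots, and match coefficients against the irreducible defining polynomial, which gives $\alpha=n$, $\beta=m$, $b=-ca^{p^r}$, $a=-cb^{p^r}$ and $c^{p^r+1}=1$. The $x=0$ specialization and the substitution $y\mapsto 1/y$ are unnecessary, since dividing by $-c'$ already puts the relation in the shape of $\f$, irreducible because $c\neq -a/b$. The coefficient bookkeeping you flagged as the main obstacle is routine and yields exactly the stated relations.
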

\begin{proof}
From Propositions \ref{Prop3.5} and \ref{Prop2.6} (b), $\mathcal{F}$ is $\mathbb{F}_q$-Frobenius nonclassical w.r.t. $\Sigma_2$ if and only if
\begin{equation}\label{E3.16}
       a x^{n-1+q} y^{m+1} + b x^{n-1+q}y^q + c y^{m+1} - y^q = 0.
\end{equation}
Assume that $\f$ is $\mathbb{F}_q$-Frobenius nonclassical w.r.t. $\Sigma_2$. Then $q \geq m+1$. Indeed, if $q< m+1$, then it follows from \eqref{E3.16} that 
$$
a x^{n-1+q} y^{m+1-q} + b x^{n-1+q} + c y^{m+1-q} - 1 = 0,
$$
thus $[\F(\f): \F(x)] \leq m+1-q < m =[\F(\f): \F(x)]$, a contradiction. Hence $q\geq m+1$ and from \eqref{E3.16} we have
\begin{equation}\label{E3.16.2}
       a x^{n-1+q}  + b x^{n-1+q}y^{q-m-1} + c - y^{q-m-1} = 0.
\end{equation}
Let $\alpha,\beta$ be positive integers, both co-prime to $p$, such that $n-1+q=p^r \alpha$ and $q-m-1=p^s\beta$. The same argument used in the proof of Proposition \ref{Prop3.6} leads us to $r=s$. From \eqref{E3.16.2} we have
\begin{equation}\label{E3.16.3}
       \left(\frac{-a}{c}\right)^{1/p^r} x^{\alpha}  + \left(\frac{-b}{c}\right)^{1/p^r} x^{\alpha}y^{\beta} -1 +\left(\frac{1}{c}\right)^{1/p^r} y^{\beta} = 0.
\end{equation}
Since $c \neq -a/b$, one can see that the left side of \eqref{E3.16.3} is an absolutely irreducible polynomial in $\F[x,y]$ (same argument used in the proof of Proposition \ref{Prop3.6}). Hence \eqref{E3.16.3} implies that $\alpha=n$ and $\beta=m$, and then $n=\frac{q-1}{p^r-1}$ and $m=\frac{q-1}{p^r+1}$ (which in particular implies that $2r \mid h$). Furthermore, we conclude that 
$$
\left(\frac{-a}{c}\right)^{1/p^r}=b, \ \ \left(\frac{-b}{c}\right)^{1/p^r}=a \ \ \text{ and } \ \ c^{p^r+1}=1.
$$
Thus $(c^{p^r+1})^{p^r-1}=1$, that is, $c \in \mathbb{F}_{p^{2r}}$. From $b=-a^{p^r}c$ we have $b^{p^r}=-a^{p^{2r}}c^{p^r}=-a/c$, and then $a^{p^{2r}-1}=1$. Hence $a \in \mathbb{F}_{p^{2r}}$ and consequently $b \in \mathbb{F}_{p^{2r}}$. The converse follows straightforwardly by reversing the steps from \eqref{E3.16.3}. 
\end{proof}

\begin{proposition} \label{Prop3.8}  
Let $p > 5$ and suppose that $p \mid (n+1)$ and $p \mid (m-1)$.  
Then the curve $\mathcal{F}$ defined over $\mathbb{F}_q$ is $\mathbb{F}_q$-Frobenius classical w.r.t. $\Sigma_2$.  
\end{proposition}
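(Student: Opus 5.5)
The plan mirrors the proof of Proposition~\ref{Prop3.7}, with the roles of $x$ and $y$ exchanged, and then uses the standing assumption $n \geq m$ to reach a contradiction. By Proposition~\ref{Prop3.5} (case $p\mid(n+1)$, $p\mid(m-1)$) and Proposition~\ref{Prop2.6}(b), $\f$ is $\F$-Frobenius nonclassical w.r.t. $\Sigma_2$ if and only if $\Phi_q(P)\in\mathcal{H}_P$ for a general point $P$. Substituting $(X:Y:Z)=(x^q:y^q:1)$ into $H_P$ with $u=x$, $v=y$, this becomes the vanishing in $\F(\f)$ of
\begin{equation*}
a x^{n+1} y^{m-1+q} + b x^{n+1} + c y^{m-1+q} x^q - x^q .
\end{equation*}
I will assume this function vanishes and derive a contradiction.

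\textbf{Step 1: $q \geq n+1$.} If $q<n+1$, divide by $x^q$ to get $a x^{n+1-q}y^{m-1+q}+bx^{n+1-q}+cy^{m-1+q}-1=0$. This is a nontrivial relation of degree at most $n+1-q<n$ in $x$ over $\F(y)$. It is nontrivial because the coefficients $ay^{m-1+q}+b$ and $cy^{m-1+q}-1$ cannot both vanish, since $y$ is transcendental. But $[\F(\f):\F(y)]=n$, so this is a contradiction, exactly as in Proposition~\ref{Prop3.7}.

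\textbf{Step 2: $p$-power reduction.} With $q\ge n+1$, divide by $x^{n+1}$ to get
\begin{equation*}
a y^{m-1+q} + b + c\,y^{m-1+q}x^{q-n-1} - x^{q-n-1}=0 .
\end{equation*}
The case $q=n+1$ must be handled separately. There the relation reduces to $(a+c)y^{m-1+q}+b-1=0$, which forces $y$ to be constant unless $a+c=0$ and $b=1$. In that case I must check directly whether the relation holds identically; this seems to need $c=-a$, and the condition $c\neq -a/b$ with $b=1$ rules it out. Now assume $q>n+1$. Write $m-1+q=p^s\beta$ and $q-n-1=p^r\alpha$ with $p\nmid\alpha\beta$. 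The separability argument of Proposition~\ref{Prop3.6} forces $r=s$. Taking $p^r$-th roots gives a polynomial of the same shape as $\f$, absolutely irreducible because of the condition $c\ne -a/b$. It must therefore be a scalar multiple of $ax^ny^m+bx^n+cy^m-1$, and matching exponents gives $\alpha=n$ and $\beta=m$.

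\textbf{Step 3: contradiction.} From $q-n-1=p^rn$ we get $n=\frac{q-1}{p^r+1}$, and from $m-1+q=p^rm$ we get $m=\frac{q-1}{p^r-1}$. Hence $m>n$, contradicting $n\ge m$. It follows that the function never vanishes, and $\f$ is $\F$-Frobenius classical.

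The main obstacle is Step 2: justifying the irreducibility of the rescaled polynomial and carrying out the coefficient matching carefully, including the degenerate case $q=n+1$. I will treat the degenerate case separately using the coefficient condition $c\ne -a/b$, as indicated above.
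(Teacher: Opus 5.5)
Your proposal is correct and is exactly the paper's argument: the paper only says to argue as in Proposition~\ref{Prop3.7} with the roles of $x$ and $y$ exchanged, which gives $n=\frac{q-1}{p^r+1}<m=\frac{q-1}{p^r-1}$ and contradicts the standing assumption $n\ge m$. Your separate treatment of the degenerate case $q=n+1$ fills in a point that the paper's template in Proposition~\ref{Prop3.7} passes over silently: there both coefficients of $(a+c)y^{m-1+q}+(b-1)=0$ must vanish, which forces $c=-a/b$ and is excluded by hypothesis.
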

\begin{proof}
Arguing analogously as in Proposition \ref{Prop3.7}, we arrive at $m=\frac{q-1}{p^r-1}$ and $n=\frac{q-1}{p^r+1}$, contradicting our assumption $n \geq m$.
\end{proof}

\begin{proposition} \label{Prop3.9}  
Let $p > 5$ and suppose that $p \mid (n+1)$ and $p \mid (m+1)$.  
Then the curve $\mathcal{F}$ defined over $\mathbb{F}_q$ is $\mathbb{F}_q$-Frobenius nonclassical w.r.t. $\Sigma_2$  
if and only if $m = n = \dfrac{q-1}{p^r + 1}$ for some $r < h$ such that $2r \mid h$, with  
$ac^{p^r} = -b$, $a^{p^r + 1} = 1$, $ab^{p^r} = -c$, and $a, b, c \in \mathbb{F}_{p^{2r}}$.  
\end{proposition}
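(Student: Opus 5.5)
We follow the pattern of Propositions~\ref{Prop3.6} and \ref{Prop3.7}. By Proposition~\ref{Prop3.5} (fourth case), the osculating conic at a general point $P=(u:v:1)$ is
$$
H_P:\ au^{n+1}v^{m+1}Z^2+bu^{n+1}YZ+cv^{m+1}XZ-XY=0 .
$$
By Proposition~\ref{Prop2.6}(b), $\f$ is $\F$-Frobenius nonclassical w.r.t. $\Sigma_2$ if and only if $\Phi_q(P)\in\mathcal H_P$ for infinitely many $P$. This is equivalent to the identity
\begin{equation*}
a x^{n+1}y^{m+1}+b x^{n+1}y^{q}+c x^{q}y^{m+1}-x^qy^q=0
\end{equation*}
in $\F(\f)$.

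\textbf{Step 1: $q\ge n+1$ and $q\ge m+1$.} Suppose $q<m+1$. Dividing by $y^q$ gives
$$
ax^{n+1}y^{m+1-q}+bx^{n+1}+cx^qy^{m+1-q}-x^q=0 .
$$
Viewed as a relation for $y$ over $\F(x)$, this has degree $m+1-q<m=[\F(\f):\F(x)]$. It is a nontrivial relation, since it cannot vanish identically because of the $b x^{n+1}-x^q$ part. This is a contradiction, as in Proposition~\ref{Prop3.7}. The same argument with $x$ and $y$ swapped gives $q\ge n+1$.

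\textbf{Step 2: reduce to a curve of the same shape.} Dividing by $x^{n+1}y^{m+1}$ yields
$$
a+b\,y^{q-m-1}+c\,x^{q-n-1}-x^{q-n-1}y^{q-m-1}=0 .
$$
Write $q-n-1=p^r\alpha$ and $q-m-1=p^s\beta$ with $p\nmid\alpha\beta$. These exponents are divisible by $p$, since $p\mid n+1$, $p\mid m+1$ and $p\mid q$. The separability argument of Proposition~\ref{Prop2.1'}/\ref{Prop3.6} forces $r=s$: otherwise one of $x,y$ would satisfy a relation which is a $p$-th power in it, making it inseparable. Taking $p^r$-th roots and normalizing gives
$$
\Big(\tfrac{-1}{a}\Big)^{1/p^r}x^{\alpha}y^{\beta}+\Big(\tfrac{c}{-a}\Big)^{1/p^r}\!\cdot(-1)\cdot(-1)\,x^{\alpha}+\dots-1=0 .
$$
More precisely, the relation is
$$
-a^{-1/p^r}x^\alpha y^\beta+(c/a)^{1/p^r}x^\alpha+(b/a)^{1/p^r}y^\beta+1=0,
$$
which after multiplying by $-1$ has exactly the shape $a'x^\alpha y^\beta+b'x^\alpha+c'y^\beta-1$.

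\textbf{Step 3: irreducibility and comparison.} The main obstacle is checking that this polynomial is absolutely irreducible, i.e. that its coefficients satisfy the nondegeneracy condition $c'\ne -a'/b'$. We must translate this back to the hypothesis $c\neq -a/b$. Here $a'=a^{-1/p^r}$, $b'=-(c/a)^{1/p^r}$, $c'=-(b/a)^{1/p^r}$. The condition $c'=-a'/b'$ becomes $(b/a)^{1/p^r}=-\,a^{-1/p^r}(a/c)^{1/p^r}$, i.e. $b/a=-1/c$, i.e. $c=-a/b$, which is excluded. Hence irreducibility follows as in Proposition~\ref{Prop3.6}.

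It follows that $ax^ny^m+bx^n+cy^m-1$ equals this polynomial up to a scalar, so $\alpha=n$ and $\beta=m$. Then $q-1=n(p^r+1)=m(p^r+1)$, which gives $m=n=\frac{q-1}{p^r+1}$. The equality $(p^r+1)\mid (q-1)$ with $q=p^h$ forces $2r\mid h$.

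\textbf{Step 4: coefficient conditions and converse.} Comparing coefficients with the scalar $\lambda$ fixed by the constant term gives $a^{-1/p^r}=\mu a$, $(c/a)^{1/p^r}=-\mu b$ and $(b/a)^{1/p^r}=-\mu c$. Raising to the $p^r$ power and eliminating $\mu$ is routine and yields $a^{p^r+1}=1$, $ac^{p^r}=-b$ and $ab^{p^r}=-c$. Iterating these relations once more, as in Proposition~\ref{Prop3.7}, gives $a^{p^{2r}}=a$, then $b^{p^{2r}}=b$ and $c^{p^{2r}}=c$, so $a,b,c\in\mathbb{F}_{p^{2r}}$. For the converse, substitute these relations back and reverse the steps: the identity then holds modulo the defining polynomial, and Proposition~\ref{Prop2.6}(b) gives Frobenius nonclassicality.
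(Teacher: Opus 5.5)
Your argument follows the paper's own proof step for step: the reduction via Propositions~\ref{Prop3.5} and \ref{Prop2.6}(b), then $q\ge n+1\ge m+1$, division by $x^{n+1}y^{m+1}$, equal $p$-adic valuations, $p^r$-th roots, irreducibility from $c\neq -a/b$, and comparison of coefficients. Your check of the nondegeneracy condition and your derivation of $a,b,c\in\mathbb{F}_{p^{2r}}$ are correct, and more explicit than the paper's. However, you inherit a genuine gap that the paper also has. Step~1 only yields $q\ge n+1$, and in Step~2 you write $q-n-1=p^r\alpha$ with $p\nmid\alpha$. This is impossible when $q=n+1$: an exponent equal to $0$ is divisible by $p$ but has no such decomposition. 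The hypotheses permit this case, since $p\mid q=n+1$.

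If you treat this case separately, you find that the statement itself needs amending.
\begin{itemize}
\item If $q=n+1>m+1$, the identity reads $(a+c)+(b-1)y^{q-m-1}=0$. This forces $b=1$ and $c=-a=-a/b$, which is excluded, so nothing is lost.
\item If $q=m+1$, then $q=n+1$ as well, and the identity becomes $(a+b+c-1)x^qy^q=0$, i.e.\ $a+b+c=1$. This is compatible with $c\neq -a/b$; for example, $q=7$, $n=m=6$, $(a,b,c)=(1,2,5)$.
\end{itemize}
In the second case, $\Phi_q(P)\in\mathcal{H}_P$ for every $P=(u:v:1)$ with $uv\neq 0$, so $\mathcal{F}$ is $\mathbb{F}_q$-Frobenius nonclassical. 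Consistently, it contains all $(q-1)^2$ points of $(\mathbb{F}_q^*)^2$. Yet $n=q-1$ is not of the form $(q-1)/(p^r+1)$ with $r\ge 1$. So the ``only if'' direction cannot be proved as stated. You must treat $q=n+1$ on its own and add the case $m=n=q-1$, $a+b+c=1$, which is the analogue of Theorem~\ref{Theoconicas}(4) and Proposition~\ref{Prop2.2'}.

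A smaller point: in the swapped half of Step~1, the coefficient $cy^{m+1}-y^q$ vanishes when $m+1=q$ and $c=1$. Nontriviality of the relation there therefore also needs $a+b\neq 0$, which again follows from $c\neq -a/b$.
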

\begin{proof}
Again, from Propositions \ref{Prop3.5} and \ref{Prop2.6} (b), $\mathcal{F}$ is $\mathbb{F}_q$-Frobenius nonclassical w.r.t. $\Sigma_2$ if and only if
\begin{equation}\label{E3.17}
       a x^{n+1} y^{m+1} + b x^{n+1}y^q + c y^{m+1}x^q - x^qy^q = 0.
\end{equation}
The same argument used in the previous propositions shows that $q \geq n+1 \geq m+1$. Suppose that $\f$ is $\F$-Frobenius nonclassical w.r.t. $\Sigma_2$. Then equation \eqref{E3.17} gives
\begin{equation}\label{E3.17.1}
       a^{-1} x^{q-n-1} y^{q-m-1}+(-ca^{-1}) x^{q-n-1} + (-ba^{-1}) y^{q-m-1}  - 1 = 0.
\end{equation}
Let $\alpha,\beta$ be positive integers, both co-prime to $p$, such that $q-n-1=p^r \alpha$ and $q-m-1=p^s\beta$. Again, the same argument used in the proof of Proposition \ref{Prop3.6} leads us to $r=s$. Thus \eqref{E3.17.1} provides
\begin{equation}\label{E3.17.2}
       (a^{-1})^{1/p^r} x^{\alpha} y^{\beta}+(-ca^{-1})^{1/p^r} x^{\alpha} + (-ba^{-1})^{1/p^r} y^{\beta}  - 1 = 0.
\end{equation}
The left side of \eqref{E3.17.2} has the same shape as the defining polynomial of $\f$. Thus from $c \neq -a/b$, we conclude that the left side of \eqref{E3.17.2} is an absolutely irreducible polynomial in $\F[x,y]$. Hence \eqref{E3.17.2} implies that $\alpha=n$, $\beta=m$, $a^{p^r}=a^{-1}$, $b^{p^r}=-c/a$ and $c^{p^r}=-b/a$. The conclusion then follows analogously to the proof of Proposition \ref{Prop3.7}. The converse follows straightforwardly by reversing the steps from \eqref{E3.17.2}. 
\end{proof}

\begin{obs}
In \cite[Sections 4 and 5]{AK} curves defined by $\f_{u,v}:uX^nY^n-(X^n+Y^n)+v=0$, where $u,v \in \F$ with $uv \neq 0$ were considered from the point of view of Frobenius nonclassicality with respect to conics. The results presented in this section generalize those obtained in \cite[Sections 4 and 5]{AK}.

 Moreover, the result established in Proposition \ref{Prop3.6} was apparently overlooked in \cite[Proposizione 5.3]{AK}. In fact, in \cite[Proposizione 5.3]{AK} the authors claim that $\f_{u,v}$ is $\F$-Frobenius classical with respect to the linear system of conics provided that $p>2$, $n<(q-1)/2$ and $p \mid n-1$. However, this does not necessarily hold, as shown by Proposition \ref{Prop3.6}.
\end{obs}

\begin{theorem} \label{Teo3.2}  
Assume $p > 5$ and $q = p^h.$  
The curve $\mathcal{F}$ defined over $\mathbb{F}_q$ is $\mathbb{F}_q$-Frobenius nonclassical w.r.t. $\Sigma_2$ if and only if one of the following conditions holds:
\begin{itemize}
    \item[(i)] $p \mid (n - 1)$ and $m = n = \dfrac{q - 1}{p^r - 1}$ with $r < h$ such that $r \mid h$, and $a, b, c \in \mathbb{F}_{p^r};$
    \item [(ii)] $p \mid (n-1)$ and $p \mid (m+1)$ and  
 $n=\frac{q-1}{p^r-1}$, $m=\frac{q-1}{p^r+1}$ with $r<h$ such that $2r \mid h$, where $-b=ca^{p^r}$, $-a=cb^{p^r}$, $c^{p^r+1}=1$ and $a,b,c \in \mathbb{F}_{p^{2r}}$;
    \item[(iii)] $p \mid (n + 1)$ and $m = n = \dfrac{q - 1}{p^r + 1}$ with $r < h$ such that $2r \mid h$,  
    $a c^{p^r} = -b$, $a^{p^r + 1} = 1$, $a b^{p^r} = -c$, and $a, b, c \in \mathbb{F}_{p^{2r}}.$
\end{itemize}  
\end{theorem}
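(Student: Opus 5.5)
The plan is to assemble this theorem from the results of this section and the previous subsection, in the same way Theorem \ref{Teo2.4} was assembled. First reduce to the nonclassical case. Assume $p>5$. By Proposition \ref{Prop1.4}, with $M=5<p$, an $\F$-Frobenius nonclassical curve w.r.t. $\Sigma_2$ must be nonclassical w.r.t. $\Sigma_2$.

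Nonclassicality is a geometric property. Over $\overline{\mathbb{F}}_q$ we may rescale $x$ and $y$ to reach the normalized form $b=c=1$ with $a\neq -1$; this is allowed because the condition $c\neq -a/b$ is preserved under the rescaling. Then Theorem \ref{Teo3.2} (the $\Sigma_2$ classicality theorem) shows that exactly four arithmetic cases can occur: $p\mid(n-1),(m-1)$; $p\mid(n-1),(m+1)$; $p\mid(n+1),(m-1)$; $p\mid(n+1),(m+1)$. Outside these cases the curve is classical, hence $\F$-Frobenius classical.

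Next, in each of the four cases, Proposition \ref{Prop3.1} gives classicality w.r.t. $\Sigma_1$, so Proposition \ref{Prop2.6} applies. Together with Proposition \ref{Prop3.5}, it reduces $\F$-Frobenius nonclassicality to the vanishing of $H_P$ evaluated at $(x^q,y^q)$ with $P$ generic. This is exactly the starting point of Propositions \ref{Prop3.6}, \ref{Prop3.7}, \ref{Prop3.8} and \ref{Prop3.9}, and I would quote them case by case.
\begin{itemize}
\item[(1)] Case $p\mid(n-1),(m-1)$: Proposition \ref{Prop3.6} gives item (i).
\item[(2)] Case $p\mid(n-1),(m+1)$: Proposition \ref{Prop3.7} gives item (ii).
\item[(3)] Case $p\mid(n+1),(m-1)$: Proposition \ref{Prop3.8} says this never happens, because it would force $n<m$.
\item[(4)] Case $p\mid(n+1),(m+1)$: Proposition \ref{Prop3.9} gives item (iii).
\end{itemize}

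In items (i) and (iii) the hypothesis $p\mid(m\mp1)$ can be dropped, since it is automatic once $m=n$. In item (i) the hypothesis $p\mid(n-1)$ is also consistent, because $n=(q-1)/(p^r-1)\equiv 1 \pmod p$.

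The main point needing care is the passage between the normalized curve $b=c=1$, used for classicality, and the general coefficients $a,b,c$, used in Propositions \ref{Prop3.5}--\ref{Prop3.9}. I would note that classicality w.r.t. $\Sigma_2$ is invariant under the $\overline{\mathbb{F}}_q$-linear change of coordinates $x\mapsto \lambda x$, $y\mapsto\mu y$. Hence the divisibility conditions of Theorem \ref{Teo3.2} hold for arbitrary nonzero $a,b,c$ with $c\neq -a/b$.

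The converse directions are already contained in the cited propositions, so no further work is needed.
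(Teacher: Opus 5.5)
Your proposal is correct and matches the paper's proof, which obtains the theorem by combining Propositions~\ref{Prop3.6}, \ref{Prop3.7}, \ref{Prop3.8} and \ref{Prop3.9} case by case. You also spell out the reduction the paper leaves implicit: Proposition~\ref{Prop1.4} and the $\Sigma_2$-classicality classification restrict attention to the four divisibility cases, and Propositions~\ref{Prop3.1}, \ref{Prop3.5} and \ref{Prop2.6} then put each case into the form those four propositions handle.
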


\begin{proof}
    The proof follows directly from Propositions \ref{Prop3.6}, \ref{Prop3.7}, \ref{Prop3.8}, and \ref{Prop3.9}.
\end{proof}

\subsection{The number of rational points}

We aim now to determine $N_q(\f)$ for the cases described in Theorem~\ref{Teo3.2}. For the remaining cases, a bound for $N_q(\f)$ will be presented.

\begin{proposition}
 \label{Teo3.3}
Let $n=m=\frac{q-1}{p^r-1}$ with $r<h$, $r\mid h$, and let $a,b,c \in \mathbb{F}_{p^r}$. Then
\begin{equation*}
    N_q(\f)=n^2(p^r-3)+4n.
\end{equation*}
\end{proposition}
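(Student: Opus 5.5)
The plan is to count the $\F$-rational points of a nonsingular model directly. I will split them into the affine points with $uv\neq 0$, the affine points on the axes, and the branches centred at the two singular points at infinity. The key tool, used exactly as in Propositions~\ref{Teo2.9} and \ref{Teo2.10}, is that for $n=\frac{q-1}{p^r-1}$ the map $u\mapsto u^n$ is the norm $\F^*\to\mathbb{F}_{p^r}^*$. This map is surjective and each fibre has exactly $n$ elements.

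For affine points $(u,v)$ with $uv\neq 0$, put $s=u^n$ and $t=v^n$, so $s,t\in\mathbb{F}_{p^r}^*$. The equation becomes
\[
ast+bs+ct=1 \iff t(as+c)=1-bs .
\]
If $as+c=0$ then $1-bs=0$ as well, which forces $s=-c/a=1/b$, i.e.\ $c=-a/b$; this is excluded. So $s\neq -c/a$ and $t=(1-bs)/(as+c)$. Since $a,b,c\in\mathbb{F}_{p^r}$, this $t$ lies in $\mathbb{F}_{p^r}$, and it is nonzero exactly when $s\neq 1/b$.

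The two excluded values $-c/a$ and $1/b$ are distinct, nonzero, and lie in $\mathbb{F}_{p^r}$. Hence $s$ ranges over $p^r-3$ admissible values, each determining a unique $t\in\mathbb{F}_{p^r}^*$. Each pair $(s,t)$ lifts to $n\cdot n$ pairs $(u,v)$, giving $n^2(p^r-3)$ points.

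For the points on the axes: $u=0$ gives $cv^n=1$ with $1/c\in\mathbb{F}_{p^r}^*$, hence $n$ rational points. Symmetrically, $v=0$ gives $n$ points. All affine points are nonsingular by Proposition~\ref{Prop3.10}(c), so the affine count is $n^2(p^r-3)+2n$.

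The remaining step, which is the main point requiring care, is counting rational branches at $P=(1:0:0)$ and $Q=(0:1:0)$. By Proposition~\ref{Prop3.10}(c), $P$ has multiplicity $m=n$ and tangent lines $y=\alpha$ with $\alpha^n=-b/a$. Since $-b/a\in\mathbb{F}_{p^r}^*$, surjectivity of the norm gives $n$ distinct solutions $\alpha\in\F$. So $P$ is an ordinary $n$-fold point with $n$ distinct $\F$-rational tangents, hence it carries $n$ linear branches, one per tangent. By \cite[Theorem 8.10]{HKT}, as invoked in Proposition~\ref{Teo2.9}, a linear branch whose tangent is defined over $\F$ is itself $\F$-rational, using that the curve is defined over $\F$. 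The same argument at $Q$, with $\beta^n=-c/a$, gives $n$ more rational branches. Adding these $2n$ branches to the affine count yields $N_q(\f)=n^2(p^r-3)+4n$.
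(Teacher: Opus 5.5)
Your proof is correct and follows essentially the same route as the paper. The paper also counts the affine points on and off the axes via the surjective norm $u\mapsto u^n$ from $\F^*$ onto $\mathbb{F}_{p^r}^*$, and obtains $n$ linear $\F$-rational branches at each of the two ordinary $n$-fold points at infinity via the rationality criterion for branches from Hirschfeld--Korchm\'aros--Torres. The only difference is that you keep general $a,b,c$ instead of normalizing to $b=c=1$, which lets you justify explicitly, using $c\neq -a/b$, the count $p^r-3$ that the paper states without detail.
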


\begin{proof}
Since $b,c \in \mathbb{F}_{p^r}$, using the surjectivity of the norm map $\F \rightarrow \mathbb{F}_{p^r}$ we may assume $b=c=1$. Consider the projective curve
\[
\f: aX^nY^n+X^nZ^n+Y^nZ^n=Z^{2n}.
\]
Its only singularities are $P=(0:1:0)$ and $Q=(1:0:0)$.

We first determine the number of $\F$-rational branches of $\f$ centered at $P$. De-homogenizing the defining polynomial of $\f$ with respect to the variable $Y$, we obtain that
\[
\f: ax^n+x^nz^n+z^n-z^{2n}=0.
\]
Thus $P$ is an ordinary singularity of multiplicity $n$, and all the $n$ tangent lines are defined over $\F$. Hence, from [11, Theorem 8.10], there are exactly $n$ branches centered at $P$, all of them linear and rational.
By an analogous argument, we conclude that there are exactly $n$ branches centered at $Q$, all of them linear and rational.

It remains only to count the number of affine rational points on $\f$, which are all nonsingular.

If $xy=0$, the number of rational points is $n+n=2n$. Now consider a $\F$-rational point $(x,y)$ of $\f$ such that $xy\neq 0$. In this case,
\[
ax^ny^n+x^n+y^n=1 \iff x^n(ay^n+1)=1-y^n.
\]
There are $n^2(p^r-3)$ rational points satisfying this condition. Therefore, the total number of rational points corresponding to nonsingular points is
\[
n^2(p^r-3)+2n.
\]

Finally, adding the contributions from the singular points, we obtain the desired result.
\end{proof}

\begin{proposition} \label{Teo3.5}
Let $m=n=\frac{q-1}{p^r+1}$ with $r<h$ such that $2r\mid h$ where $ac^{p^r}=-b$, $a^{p^r+1}=1$, $ab^{p^r}=-c$, and $a,b,c \in \mathbb{F}_{p^{2r}}$. Then
\[
N_q(\f)=\left(\frac{q-1}{p^{2r}-1}\right)^2N^{\prime}_{p^{2r}}(\mathcal{C})+(\delta(c)+\delta(b)+\delta(-c/a)+\delta(-b/a))n,
\]
where $N^{\prime}_{p^{2r}}(\mathcal{C})$ stands for the number of affine $\mathbb{F}_{p^{2r}}$-rational points of $\mathcal{C}:ax^{p^r-1}y^{p^r-1}+bx^{p^r-1}+cy^{p^r-1}-1=0$ with $xy \neq 0$ and $\delta:\mathbb{F}_{p^{2r}}^{*} \rightarrow \{0,1\}$ is defined by $\delta(d)=1$ if $d$ is a $(p^r-1)$-th power and $\delta(d)=0$ otherwise.
\end{proposition}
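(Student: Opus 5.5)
We count points of $\f: aX^nY^n+bX^nZ^n+cY^nZ^n=Z^{2n}$ in three groups: affine points with $xy\neq 0$, affine points on $XY=0$, and rational branches centered at the two singular points $P=(0:1:0)$ and $Q=(1:0:0)$ given by Proposition \ref{Prop3.10}(c). The basic tool is the map $t\mapsto t^n$ on $\F^*$. Since $n=\frac{q-1}{p^r+1}$, we have $(q-1)/n=p^r+1$. Moreover $p^{2r}-1=(p^r-1)(p^r+1)$ divides $q-1$ because $2r\mid h$. Hence the image of $t\mapsto t^n$ is the subgroup of order $p^r+1$, and the map is $n$-to-$1$ onto it.

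\emph{Affine points with $xy\neq 0$.} Such a point satisfies $aU V+bU+cV=1$ with $U=x^n$, $V=y^n$ both in the subgroup $\mu_{p^r+1}$. Each admissible pair $(U,V)$ gives $n^2$ points $(x,y)$. We then identify $\mu_{p^r+1}$ with the set of $(p^r-1)$-th powers in $\mathbb{F}_{p^{2r}}^*$: every element of $\mu_{p^r+1}$ is $s^{p^r-1}$ for exactly $p^r-1$ values $s\in\mathbb{F}_{p^{2r}}^*$. So the number of pairs $(U,V)$ equals $N'_{p^{2r}}(\mathcal{C})/(p^r-1)^2$. Multiplying by $n^2$ gives
$$n^2N'_{p^{2r}}(\mathcal{C})/(p^r-1)^2=\left(\frac{q-1}{p^{2r}-1}\right)^2N'_{p^{2r}}(\mathcal{C}),$$
using $n/(p^r-1)=(q-1)/(p^{2r}-1)$. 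This step is purely formal.

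\emph{Points on $XY=0$.} Setting $x=0$ gives $y^n=1/c$, which has $n$ solutions in $\F$ if $1/c\in\mu_{p^r+1}$ and none otherwise. Now $1/c\in\mu_{p^r+1}$ exactly when $c$ is, i.e. when $\delta(c)=1$. This requires $c\in\mathbb{F}_{p^{2r}}$, which is given. Symmetrically, $y=0$ contributes $\delta(b)n$ points.

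\emph{Branches at the singular points.} This is the main obstacle. Following Proposition \ref{Teo3.3}, dehomogenizing at $P$ gives $aX^n+bX^nZ^n+cZ^n-Z^{2n}=0$, whose lowest-degree form is $aX^n+cZ^n$. So $P$ is an ordinary $n$-fold point with tangents $X=\beta Z$, where $\beta^n=-c/a$. By \cite[Theorem 8.10]{HKT} the branches are linear and correspond to these tangents. A branch is $\F$-rational iff its tangent is, i.e. iff $-c/a$ is an $n$-th power in $\F$, which is $\delta(-c/a)$ by the same subgroup argument. In that case all $n$ tangents are rational, contributing $\delta(-c/a)n$. Similarly $Q$ contributes $\delta(-b/a)n$. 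One must also check that these tangents are distinct lines, which holds since $p\nmid n$, so the singularity is indeed ordinary. The remaining care is to confirm that $-c/a$ and $-b/a$ lie in $\mathbb{F}_{p^{2r}}$ so that $\delta$ applies; this follows from $a,b,c\in\mathbb{F}_{p^{2r}}$.

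Summing the three groups gives the stated formula.
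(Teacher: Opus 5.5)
Your proposal is correct and follows essentially the paper's own argument. You use the same three-way split: affine points with $xy\neq 0$, affine points on $XY=0$, and branches at the ordinary $n$-fold points $P,Q$, which are rational exactly when their tangents are. The only cosmetic difference is that you factor $t\mapsto t^n$ through $\mu_{p^r+1}$ and $s\mapsto s^{p^r-1}$ on $\mathbb{F}_{p^{2r}}^*$, whereas the paper factors it as the norm $t\mapsto t^{e}$, with $e=\frac{q-1}{p^{2r}-1}$, followed by $(p^r-1)$-th powering.
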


\begin{proof}
The only singularities of $\f$ are $P=(0:1:0)$ and $Q=(1:0:0)$. Arguing as in the proof of Proposition \ref{Teo3.3}, we conclude that both $P$ and $Q$ are ordinary. The tangent lines to $\f$ at $P$ are $X+\xi^i\alpha Z=0$, with $i=0,\ldots,n-1$, where $\xi \in \F$ is a primitive $n$-th root of unity and $\alpha^n=-c/a$. Set $e=\frac{q-1}{p^{2r}-1}$. Then $n=e(p^r-1)$, and by the surjectivity of the norm map $\F \rightarrow \mathbb{F}_{p^{2r}}$ given by $d \mapsto d^e$ we conclude that all the tangent lines to $\f$ at $P$ are defined over $\F$ if and only if $-c/a$ is a $(p^r-1)$-th power. Moreover, either all or none of such lines are defined over $\F$. An analogous argument applies to the tangent lines to $\f$ at $Q$.

We now turn our attention to the nonsingular points of the curve $\f$, which are precisely the affine points of $\f$. The curve $\f$ has an affine $\F$-rational point $(0,y)$ if and only if $y \in \F$ satisfies $y^n=1/c$, which in turn holds if and only if $c$ is a $(p^r-1)$-th power. An analogous argument holds for affine rational points of type $(x,0)$.

Now note that $n=e(p^r-1)$. Therefore, the affine equation of the curve reads
\[
ax^{e(p^r-1)}y^{e(p^r-1)}+bx^{e(p^r-1)}+cy^{e(p^r-1)}=1.
\]
Therefore the result follows by using again the surjectivity of the norm.
\end{proof}

The proof of the following result is analogous to the one of Proposition \ref{Teo3.5}, and will be omitted. 

\begin{proposition}\label{ult}
Let  $n=\frac{q-1}{p^r-1}$, $m=\frac{q-1}{p^r+1}$ with $r<h$ such that $2r \mid h$, where $-b=ca^{p^r}$, $-a=cb^{p^r}$, $c^{p^r+1}=1$ and $a,b,c \in \mathbb{F}_{p^{2r}}$. Then
\[
N_q(\f)=\left(\frac{q-1}{p^{2r}-1}\right)^2N^{\prime}_{p^{2r}}(\mathcal{G})+(\delta(c)+\delta(-b/a))m+(\gamma(-c/a)+\gamma(b))n,
\]
where $N^{\prime}_{p^{2r}}(\mathcal{G})$ stands for the number of affine $\mathbb{F}_{p^{2r}}$-rational points of $\mathcal{G}:ax^{p^r+1}y^{p^r-1}+bx^{p^r+1}+cy^{p^r-1}-1=0$ with $xy \neq 0$, $\delta:\mathbb{F}_{p^{2r}}^{*} \rightarrow \{0,1\}$ is defined by $\delta(d)=1$ if $d$ is a $(p^r-1)$-th power and $\delta(d)=0$ otherwise and $\gamma(d)=1$ if $d \in \mathbb{F}_{p^r}$ and $\gamma(d)=0$ otherwise.
\end{proposition}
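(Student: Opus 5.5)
The plan is to mirror the proof of Proposition~\ref{Teo3.5}. I would count separately three sets: the $\F$-rational branches centred at the two singular points $P=(0:1:0)$ and $Q=(1:0:0)$; the affine rational points on the axes $xy=0$; and the affine rational points with $xy\neq 0$. Throughout, write $e=\frac{q-1}{p^{2r}-1}$, so that $n=e(p^r+1)$ and $m=e(p^r-1)$. The map $d\mapsto d^e$ is the norm $\F^*\to\mathbb{F}_{p^{2r}}^*$; it is surjective, and each fibre has $e$ elements.

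\textbf{Singular points.} By Proposition~\ref{Prop3.10}, $P$ has multiplicity $n$ and $Q$ has multiplicity $m$. Dehomogenizing at $Y=1$, the lowest-order part at $P$ is $Z^m(aX^n+cZ^n)$, so the tangent lines at $P$ are $X+\xi^i\alpha Z=0$ with $\alpha^n=-c/a$. They are pairwise distinct, so $P$ is ordinary and carries $n$ linear branches. By \cite[Theorem 8.10]{HKT}, these branches are $\F$-rational exactly when the tangent lines are defined over $\F$, that is, when $-c/a$ is an $n$-th power in $\F$. Using $n=e(p^r+1)$ and the surjectivity of the norm, this is equivalent to $-c/a$ being a $(p^r+1)$-th power in $\mathbb{F}_{p^{2r}}^*$. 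These are exactly the elements of $\mathbb{F}_{p^r}^*$. The same argument at $Q$ (multiplicity $m$, tangents determined by $bY^m+aY^m\cdots$, i.e.\ by $-b/a$) shows that the $m$ branches there are rational if and only if $-b/a$ is a $(p^r-1)$-th power. The points on the axes are handled the same way. Points $(0,y)$ require $y^m=1/c$, which has $m$ rational solutions exactly when $c$ is a $(p^r-1)$-th power. Points $(x,0)$ require $x^n=1/b$, giving $n$ rational points exactly when $b\in\mathbb{F}_{p^r}$. Inverses do not affect these conditions.

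\textbf{Affine points with $xy\neq0$.} Put $X=x^e$ and $Y=y^e$. Then $X,Y\in\mathbb{F}_{p^{2r}}^*$ and the equation becomes $aX^{p^r+1}Y^{p^r-1}+bX^{p^r+1}+cY^{p^r-1}=1$. Each pair $(X,Y)$ with $XY\neq0$ on $\mathcal{G}$ lifts to exactly $e^2$ pairs $(x,y)$, which gives the term $e^2N'_{p^{2r}}(\mathcal{G})$.

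\textbf{Main obstacle.} The delicate step is the bookkeeping of which character, $\delta$ or $\gamma$, attaches to which quantity. The formula as stated pairs $m$ with $\delta(c)+\delta(-b/a)$ and $n$ with $\gamma(-c/a)+\gamma(b)$, and my computation above agrees with this. It remains to check that the relations $-b=ca^{p^r}$, $c^{p^r+1}=1$ do not force further coincidences. I would also use these relations to confirm that $P$ and $Q$ are the only singular points and that both are ordinary, as in Proposition~\ref{Teo3.3}.
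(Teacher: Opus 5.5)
Your proposal is correct and is the argument the paper intends, since it omits this proof as analogous to Proposition~\ref{Teo3.5}. You count rational branches at the two ordinary singular points via their tangent lines and \cite[Theorem 8.10]{HKT}, the axis points via $y^m=1/c$ and $x^n=1/b$, and the points with $xy\neq 0$ via the norm substitution $(x,y)\mapsto(x^e,y^e)$; your $\delta/\gamma$ bookkeeping matches the stated formula. Only cosmetic slips need fixing: the lowest-order form at $(0:1:0)$ is $aX^n+cZ^n$, not $Z^m(aX^n+cZ^n)$, and at $(1:0:0)$ it is $aY^m+bZ^m$. Your closing worry about coincidences does not affect the formula, and in fact the hypotheses force $\delta(c)=\delta(-b/a)=1$ and, using $c\neq -a/b$, $\gamma(b)=\gamma(-c/a)=0$.
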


As in the previous section, apart from the cases presented Theorem \ref{Teo3.2}, we are able to obtain an upper bound for $N_q(\f)$ via the St\"ohr-Voloch approach. Using again the definitions given in the introduction of this work, together with the notation of the inflection points of $\f$ presented in Proposition \ref{Prop3.10},  we obtain the following expressions:
\begin{equation*}
        A(P_{\xi})=
        \begin{cases}
            4n-11, &\text{if $P_{\xi} \in \f (\F)$}\\
            2n-6, &\text{otherwise}
        \end{cases};
    \end{equation*}
    \begin{equation*}
        A(P_{\rho})=
        \begin{cases}
            4m-11, &\text{if $P_{\rho} \in \f (\F)$}\\
            2m-6, &\text{otherwise}
        \end{cases}.
    \end{equation*}

Let $\alpha$ and $\beta$ denote the number of roots in $\F$ of the polynomials $T^m-c^{-1}$ and $T^n-b^{-1}$, respectively. By \cite{AP}, the genus of $\f$ is $g=(n-1)(m-1)$. Hence, using \eqref{Eq4} we obtain the following result.

\begin{proposition}
  Assume that $p>5$.  Let $\f:ax^ny^m+bx^n+cy^m=1$ defined over $\F$, where $c\neq -a/b$. If $\f$ is none of the types presented in Theorem \ref{Teo3.2}, then
    \begin{eqnarray}\label{cotaSV2}
    N_q(\f) &\leq &\frac{20(mn - m - n) + 2(q + 5)(m+n)}{5}
    - \frac{\alpha(4m - 11) + (n - \alpha)(2m - 6)  }{5} \nonumber \\
    & - & \frac{\beta(4n - 11)+(m - \beta)(2n - 6)}{5}. 
\end{eqnarray}
\end{proposition}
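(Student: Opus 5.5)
The plan is to apply the refined Stöhr--Voloch bound \eqref{Eq4} with $s=2$, $M=5$ and degree $\deg\f=m+n$, so the main term is $\frac{(\nu_1+\cdots+\nu_4)(2g-2)+(q+5)\cdot 2(m+n)}{5}$.

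\emph{Step 1 (Frobenius orders).} Since $\f$ is not of any type in Theorem~\ref{Teo3.2}, it is $\F$-Frobenius classical w.r.t.\ $\Sigma_2$. Hence $(\nu_0,\ldots,\nu_4)=(0,1,2,3,4)$ and $\nu_1+\cdots+\nu_4=10$. By Proposition~\ref{Prop3.10}(b) the genus is $g=(n-1)(m-1)$, so $2g-2=2(mn-m-n)$. Substituting gives the leading term $\frac{20(mn-m-n)+2(q+5)(m+n)}{5}$.

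\emph{Step 2 (local contributions).} We subtract $\sum A(Q)$, keeping only the points $P_\xi$ and $P_\rho$ of Proposition~\ref{Prop3.10}(e). This is legitimate because $A(Q)\ge 0$ for every $Q$: for nonrational points $j_i\ge \nu_i$, and for rational points $j_i\ge \nu_{i-1}+1$ (standard Stöhr--Voloch facts).
\begin{itemize}
\item At $P_\xi$ the tangent line meets $\f$ with multiplicity $n$, so the $(\Sigma_2,P_\xi)$-orders are $(0,1,2,n,n+1,2n)$, exactly as in Proposition~\ref{Prop3.2}. With $\nu=(0,1,2,3,4)$:
\begin{itemize}
\item if $P_\xi$ is rational, $A=(1+2+n+(n+1)+2n)-(0+1+2+3+4)-5=4n-11$;
\item otherwise, $A=(0+1+2+n+(n+1))-(0+1+2+3+4)=2n-6$.
\end{itemize}
\item Similarly, $A(P_\rho)$ equals $4m-11$ or $2m-6$.
\end{itemize}

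\emph{Step 3 (counting).} The points $P_\xi$ satisfy $\xi^m=c^{-1}$, so there are $m$ of them, of which $\alpha$ are rational. The points $P_\rho$ satisfy $\rho^n=b^{-1}$, so there are $n$ of them, of which $\beta$ are rational. The roots are distinct since $p\nmid mn$. Summing the contributions from Step 2 gives the subtracted terms.

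This pairing of $\alpha$ and $\beta$ with the coefficients $4n-11$ and $4m-11$ is where care is needed. With $\alpha$ defined through $T^m-c^{-1}$, the natural sum is $\alpha(4n-11)+(m-\alpha)(2n-6)$, not the pairing written in \eqref{cotaSV2}. So the main obstacle is this bookkeeping: reconciling the labels with the stated formula, possibly by swapping the roles of $m,n$ or of $b,c$. The orders at $P_\xi$ need no further check, as they were already established in Proposition~\ref{Prop3.2} under $n\ge m>2$ (so $n\ge 3$). Finally, since every other point has $A(Q)\ge 0$, discarding those contributions keeps the inequality valid and yields \eqref{cotaSV2}.
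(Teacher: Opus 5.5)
Your argument is the paper's own: \eqref{Eq4} with $(\nu_0,\dots,\nu_4)=(0,1,2,3,4)$, $g=(n-1)(m-1)$, $\deg\f=m+n$, and local terms only at the $m$ points $P_\xi$ and the $n$ points $P_\rho$. Your main term and your values of $A(P_\xi)$, $A(P_\rho)$ are correct.

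Your pairing is also correct. Let $\alpha$ be the number of roots of $T^m-c^{-1}$, i.e. the number of rational $P_\xi$; each contributes $4n-11$, and the other $m-\alpha$ contribute $2n-6$. Let $\beta$ be the number of roots of $T^n-b^{-1}$. Then the subtracted quantity is $\alpha(4n-11)+(m-\alpha)(2n-6)+\beta(4m-11)+(n-\beta)(2m-6)$.

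The displayed formula has $\alpha$ and $\beta$ interchanged. It is the formula of Section 3 carried over: there $\alpha$ counted the points $P_\rho$ through $T^n-a^{-1}$, whereas in Section 4 the definitions were switched but the formula was not. So the discrepancy is a slip in the statement, and the paper's argument proves exactly your version.

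The gap is in how you close. Relabeling the curve cannot reconcile the two expressions. The symmetry $x\leftrightarrow y$, i.e. $(n,m,b,c)\mapsto(m,n,c,b)$, exchanges $\alpha$ and $\beta$ and leaves each of the two expressions invariant. The correct repair is to interchange the definitions: let $\alpha$ count the roots of $T^n-b^{-1}$ and $\beta$ those of $T^m-c^{-1}$. After that, your computation gives \eqref{cotaSV2} verbatim.

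Without that repair, your final sentence, that discarding the remaining points ``yields \eqref{cotaSV2}'', is unjustified. The right-hand side you obtain exceeds the displayed one by $\frac{2(n-m)(\beta-\alpha)}{5}$. This is positive whenever $n>m$ and $\beta>\alpha$, for instance when $b^{-1}$ is an $n$-th power in $\F$ but $c^{-1}$ is not an $m$-th power. In that case you have proved a strictly weaker inequality than the one written. You should state the result with the corrected labels rather than claim the displayed formula with $\alpha,\beta$ as defined in the paper.
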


\begin{obs}
    Denote by $\Sigma_2^{P,Q}$ the linear system of conics containing the points $P=(0:1:0)$ and $Q=(1:0:0)$. Recall that we are assuming $p>5$ and consider the curve $\f:ax^ny^m+bx^n+cy^m=1$ defined over $\F$, where $c\neq -a/b$. By Proposition \ref{Prop3.10}, all the osculating conics to $\f$ at general points belong to $\Sigma_2^{P,Q}$. Hence, from Proposition \ref{Prop2.6}(b) we conclude that $\f$ is $\F$-Frobenius nonclassical w.r.t. $\Sigma_2$ if and only if $\f$ is $\F$-Frobenius nonclassical w.r.t. $\Sigma_2^{P,Q}$. As pointed out in \cite[Section 3]{Ar}, the St\"ohr-Voloch method applied to $\Sigma_2^{P,Q}$ provides a bound potentially better than the bound obtained via $\Sigma_2$. In this direction, when $m=n$ and $b=c$ with $ab \neq 0$ and $b^2\neq -a$, in \cite[Corollary 3.3]{BM} it is obtained via $\Sigma_2^{P,Q}$ a bound for $N_q(\f)$ for the $\F$-Frobenius classical cases. Therefore, apart from the cases (i) and (iii) of Theorem \ref{Teo3.2}, the bound for $N_q(\f)$ given in \cite[Corollary 3.3]{BM} holds.
\end{obs}

\section*{Acknowledgments} 

The work of Nazar Arakelian was partially supported by the São Paulo Research Foundation (FAPESP), grant 2023/03547-2.

\text{}
 
\vspace{0,5cm}\noindent {\em Author's addresses}:

\vspace{0.2 cm} \noindent Nazar ARAKELIAN \\
Instituto de Ci\^encias Matem\'aticas e de Computa\c c\~ao
\\ Universidade de S\~ao Paulo \\ Avenida Trabalhador S\~ao-carlense, 400 \\
CEP 13566-590, S\~ao Carlos SP
(Brazil).\\
 E--mail: {\tt n.arakelian@icmc.usp.br}

\text{}
 
\vspace{0.2 cm} \noindent Leandro A. M. RODRIGUES \\
Instituto Federal de S\~ao Paulo
\\ Campus Cubat\~ao \\ Rua Maria Cristina, 50 \\
CEP 11533-160, Cubat\~ao SP
(Brazil).\\
 E--mail: {\tt leandro.albino@ifsp.edu.br}

\end{document}